\theoremstyle{definition}
\newtheorem{definition}{Definition}[section]
\theoremstyle{plain}
\newtheorem{lemma}[definition]{Lemma}
\newtheorem{theorem}[definition]{Theorem}
\numberwithin{equation}{section}
\tikzset{
	vertex/.style={
		circle,
		minimum size=1.8mm,
		fill,
		inner sep=0,
		outer sep=0,
	},
	edge/.style={
		line width=.25mm,
	}
}
\newcommand*\defterm{\emph}
\DeclareMathOperator{\dom}{Dom}
\DeclareMathOperator{\im}{Im}
\newcommand*\diam[1]{\mathrm{diam}(#1)}
\newcommand*\dist[3]{d_{#1}(#2,#3)}
\newcommand*\commgraph[1]{\mathcal{G}(#1)}
\newcommand*\centre[1]{Z(#1)}
\newcommand*\idemp[1]{E(#1)}
\DeclarePairedDelimiter{\abs}{\lvert}{\rvert}
\DeclarePairedDelimiter{\parens}{\lparen}{\rparen}
\DeclarePairedDelimiter{\bracks}{\lbrack}{\rbrack}
\DeclarePairedDelimiter{\braces}{\{}{\}}
\DeclarePairedDelimiter{\set}{\{}{\}}
\DeclarePairedDelimiterX{\gset}[2]{\{}{\}}{\,#1:#2\,}
\newcommand*\Xn{\set{1,\ldots,n}}
\newcommand*\X[1]{\set{1,\ldots,#1}}
\newcommand*{\ptr}[1]{\mathcal{P}(#1)}
\newcommand*{\tr}[1]{\mathcal{T}(#1)}
\newcommand*{\psym}[1]{\mathcal{I}(#1)}
\newcommand*{\alt}[1]{\mathcal{A}(#1)}
\newcommand*{\sym}[1]{\mathcal{S}(#1)}
\newcommand*\id[1]{\mathrm{id}_{#1}}
\newcommand*{\graphptr}[2]{\mathcal{G}^X(#1,#2)}
\newcommand*{\sizeddelimiter}[2]{\bBigg@{#1}#2}
\begin{document}

\title{Diameters of commuting graphs of partial transformation semigroups}

\author{Tânia Paulista}
\address[T. Paulista]{%
Center for Mathematics and Applications (NOVA Math) \& Department of Mathematics\\
NOVA School of Science and Technology\\
NOVA University of Lisbon\\
2829--516 Caparica\\
Portugal
}
\email{%
tpl.paulista@gmail.com
}
\thanks{This work is funded by national funds through the FCT -- Fundação para a Ciência e a Tecnologia, I.P., under the scope of the projects UID/297/2025 and UID/PRR/297/2025 (Center for Mathematics and Applications - NOVA Math). The author is also funded by national funds through the FCT -- Fundação para a Ciência e a Tecnologia, I.P., under the scope of the studentship 2021.07002.BD}

\thanks{The author is thankful to her supervisors António Malheiro and Alan J. Cain for all the support, encouragement and guidance; and also for reading a draft of this paper}

\subjclass[2020]{20M20, 05C25, 05C12}

\begin{abstract}
	Let $X$ be a finite set. We determine the diameter of the commuting graph of the partial transformation semigroup $\ptr{X}$ on $X$ and show that it coincides with the diameter of the commuting graph of the transformation semigroup $\tr{X}$ on $X$, which was previously determined by Araújo, Kinyon and Konieczny. This proves the existence of a semigroup $S$ and of a proper subsemigroups $T$ of $S$ such that the diameters of the commuting graphs of $S$ and $T$ are equal.
    %We study the commuting graph of the partial transformation semigroup $\ptr{X}$ for a finite set $X$ in terms of connecttivity and diameter. Moreover, we will prove that, for a finite set $X$, the commuting graph of the partial transformation semigroup $\ptr{X}$ is connected if and only if the commuting graph of the transformation semigroup $\tr{X}$ is connected, in which case the diameters of both graphs coincide. Araújo, Kinyon and Konieczny
\end{abstract}

\maketitle

\section{Introduction}

The commuting graph of a semigroup is a simple graph whose vertices are elements of the semigroup, and where vertices are adjacent precisely when the corresponding elements of the semigroup commute.

Commuting graphs were first introduced in 1955 by Brauer and Fowler \cite{First_paper_commuting_graphs}, exclusively for groups. It was only in 2011 that Araújo, Kinyon and Konieczny \cite{Commuting_graph_T_X} extended the notion of commuting graphs to semigroups. 

Over the years, commuting graphs have proved to be powerful tools for solving problems in group and semigroup theory. For example, they were involved in the discovery of three sporadic simple groups (now known as the Fischer groups) \cite{Sporadic_simple_groups}. In 1983, Bertram \cite{Importance_commuting_graphs_1} used them to determine an upper bound for the size of the abelian subgroups of a finite group. There are also several papers \cite{Importance_commuting_graphs_2, Importance_commuting_graphs_3, Importance_commuting_graphs_5, Importance_commuting_graphs_6, Importance_commuting_graphs_4} where commuting graphs were used to study the quotients of the multiplicative group of a finite dimensional division algebras. In 2011, Araújo, Kinyon and Konieczny \cite{Commuting_graph_T_X} studied minimal left paths in commuting graphs to settle a conjecture posed by Schein \cite{Schein_conjecture} concerning the characterization of
$r$-semisimple bands.

There are multiple ways to investigate commuting graphs: we highlight the study of the groups whose commuting graphs are perfect \cite{Commuting_graphs_perfect}, split \cite{Commuting_graphs_groups_split}, chordal and cographs \cite{Graphs_arise_as_commuting_graphs_groups}; the identification of the simple graphs that are commuting graphs of groups/semigroups \cite{Graphs_arise_as_commuting_graphs_groups, Graphs_that_arise_as_commuting_graphs_of_semigroups, Graphs_that_arise_as_commuting_graphs_of_semigroups_2}; and the determination of the sets of possible values for the diameter/clique number/chromatic number/girth/knit degree of the commuting graph of a semigroup belonging to a particular class of semigroups \cite{Commuting_graph_T_X, Graphs_that_arise_as_commuting_graphs_of_semigroups, Group_whose_commuting_graph_has_diameter_n, Graphs_that_arise_as_commuting_graphs_of_semigroups_2, Completely_0-simple_paper, Completely_simple_semigroups_paper, Commuting_graphs_inverse_completely_regular}. 

Another popular line of research of commuting graphs, and the most relevant one for this paper, is the determination of properties of the commuting graphs of important groups and semigroups. The study of the commuting graph of the symmetric group $\sym{X}$ on a finite set $X$ started with the paper \cite{Symmetric_group}, which led to the determination of the clique number of the commuting graph. Other papers focused on determining the diameter of the commuting graph of $\sym{X}$ \cite{Commuting_graph_I_X, Diameter_commuting_graph_symmetric_group, Commuting_graph_symmetric_alternating_groups}, as well as its girth and minimum degree \cite{Commuting_graph_symmetric_alternating_groups}. Various properties of the commuting graph of the alternating group $\alt{X}$ on a finite set $X$ were also determined: the clique number \cite{Alternating_group}, diameter, girth and minimum degree \cite{Commuting_graph_symmetric_alternating_groups}. In 2011, Araújo, Kinyon and Konieczny \cite{Commuting_graph_T_X} obtained the diameter of the commuting graph of the transformation semigroup $\tr{X}$ on a finite set $X$, as well as the diameters of the commuting graphs of the ideals of $\tr{X}$. More recently, the present author \cite{Largest_commutative_T_X_P_X} studied the clique number, girth and knit degree of this commuting graph. In 2015, Araújo, Bentz and Konieczny \cite{Commuting_graph_I_X} obtained the clique number and diameter of the commuting graph of the symmetric inverse semigroup $\psym{X}$ on a finite set $X$, and obtained the diameters of the commuting graphs of the ideals of $\psym{X}$. In the paper \cite{Largest_commutative_T_X_P_X}, the present author investigated the clique number, girth and knit degree of the commuting graph of the partial transformation semigroup $\ptr{X}$ on a finite set $X$. All the properties we mentioned in this paragraph are mostly dependent on $\abs{X}$: for example the various diameters depend on $\abs{X}$ in ways that involve primality.

This paper offers a contribution to the continuing research on the commuting graph of $\ptr{X}$. After a brief detour through some basic concepts concerning simple graphs and commuting graphs (Section~\ref{Preliminaries}), we obtain the diameter of this graph and verify that it matches the diameter of the commuting graph of $\tr{X}$, which was previously determined by Araújo, Kinyon and Konieczny (Section~\ref{sec: diameter P(X)}). In order to obtain the diameter, we introduce a new combinatorial technique that allow us to verify whether two full transformations share an adjacent vertex lying in $\ptr{X}\setminus\tr{X}$.

This paper is based on Chapter 4 of the author's Ph.D. thesis \cite{My_thesis}.

\section{Preliminaries} \label{Preliminaries}

\subsection{Simple graphs}\label{sec: graphs}

A \defterm{simple graph} $G=(V,E)$ consists of a non-empty set $V$ --- whose elements are called \defterm{vertices} --- and a set $E$ --- whose elements are called \defterm{edges} --- formed by $2$-subsets of $V$. If $x$ and $y$ are vertices of $G$ such that $\set{x,y}\in E$, then we say that $x$ and $y$ are \defterm{adjacent}. Throughout this subsection we will assume that $G=(V,E)$ is a simple graph.

A simple graph $H=\parens{V',E'}$ is a \defterm{subgraph} of $G$ if $V'\subseteq V$ and $E'\subseteq E$. Note that, since $H$ is a simple graph, the elements of $E'$ are $2$-subsets of $V'$.

Given $V'\subseteq V$, the \defterm{subgraph induced by $V'$} is the subgraph of $G$ whose set of vertices is $V'$ and where two vertices are adjacent if and only if they are adjacent in $G$ (that is, the set of edges of the induced subgraph is $\braces{\braces{x,y}\in E: x,y\in V'}$).

%A \defterm{complete graph} is a simple graph where all distinct vertices are adjacent to each other. The unique (up to isomorphism) complete graph with $n$ vertices is denoted $K_n$.

A \defterm{path} in $G$ from a vertex $x$ to a vertex $y$ is a sequence of pairwise distinct vertices (except, possibly, $x$ and $y$) $x=x_1,x_2,\ldots,x_n=y$ such that $\braces{x_1,x_2}, \braces{x_2,x_3},\ldots, \braces{x_{n-1},x_n}$ are pairwise distinct edges of $G$. The \defterm{length} of the path is the number of edges of the path; thus, the length of our example path is $n-1$. Whenever we want to mention a path, we will write that $x=x_1-x_2-\cdots-x_n=y$ is a path (instead of writing that $x=x_1,x_2,\ldots,x_n=y$ is a path). The \defterm{distance} between the vertices $x$ and $y$, denoted $\dist{G}{x}{y}$, is the length of a shortest path from $x$ to $y$. If there is no such path between the vertices $x$ and $y$, then the distance between $x$ and $y$ is defined to be infinity, that is, $\dist{G}{x}{y}=\infty$.

%If $x=y$ then we call the path a \defterm{cycle}. 

We say that $G$ is \defterm{connected} if for all vertices $x,y\in V$ there is a path from $x$ to $y$. We can partition $V$, the vertex set of $G$, into several non-empty sets $V_1,\ldots,V_n$ such that
\begin{enumerate}
	\item For all $i\in\Xn$ and vertices $x,y\in V_i$ there is a path from $x$ to $y$.
	
	\item For all distinct $i,j\in\Xn$ and $x\in V_i$ and $y\in V_j$ there is no path from $x$ to $y$.
\end{enumerate}
Then each subgraph of $G$ induced by $V_i$, where $i\in\Xn$, is connected and we call it a \defterm{connected component} of $G$. It is clear that $G$ is connected if and only if $G$ contains exactly one connected component.

%We say that $G$ is \defterm{connected} if for every vertices $x$ and $y$ there is a path from $x$ to $y$.

The \defterm{diameter} of $G$, denoted $\diam{G}$, is the maximum distance between vertices of $G$, that is, $\diam{G}=\max\gset{\dist{G}{x}{y}}{x,y\in V}$. We notice that the diameter of $G$ is finite if and only if $G$ is connected.

If $x$ and $y$ are vertices of $G$, then we are going to use the notation $x\sim y$ to mean that either $x=y$ or $\set{x,y}\in E$. Note that if $x_1-x_2-\cdots-x_n$ is a path, then we have $x_1\sim x_2 \sim\cdots\sim x_n$. However, if we have $x_1\sim x_2 \sim\cdots\sim x_n$, then that sequence of vertices does not necessarily form a path because there might exist distinct $i,j\in\Xn$ such that $x_i=x_j$.

%Let $K\subseteq V$. We say that $K$ is a \defterm{clique} in $G$ if $\braces{x,y}\in E$ for all $x,y\in K$, that is, if the subgraph of $G$ induced by $K$ is complete. The \defterm{clique number} of $G$, denoted $\cliquenumber{G}$, is the size of a largest clique in $G$, that is, $\cliquenumber{G}=\max\left\{|K|: K \text{ is a clique in } G\right\}$.

%If the graph $G$ contains cycles, then the \defterm{girth} of $G$, denoted $\girth{G}$, is the length of a shortest cycle in $G$. If $G$ contains no cycles, then $\girth{G}=\infty$.

\subsection{Commuting graphs and extended commuting graphs}
\label{sec: (extended) commgraph}

Recall that the \defterm{center} of a semigroup $S$ is the set
\begin{displaymath}
	Z(S)= \left\{x\in S: xy=yx \text{ for all } y\in S\right\}.
\end{displaymath}

The \defterm{commuting graph} of a finite non-commutative semigroup $S$, denoted $\commgraph{S}$, is the simple graph whose set of vertices is $S\setminus Z(S)$ and where two distinct vertices $x,y\in S\setminus Z(S)$ are adjacent if and only if $xy=yx$.

%(This is the definition of commuting graph used in \cite{Commuting_graph_T_X, Commuting_graph_I_X, Commuting_graph_symmetric_alternating_groups}, for example.)

%Let $S$ be a finite semigroup. The \defterm{extended commuting graph} of $S$, denoted $\extendedcommgraph{S}$, is the simple graph whose set of vertices is $S$ and where two distinct vertices $x,y\in S$ are adjacent if and only if $xy=yx$. (This is the definition of commuting graph used in \cite{Graphs_arise_as_commuting_graphs_groups, Cameron_commuting_graphs_notes, Commuting_graphs_groups_split}, for example.)

It follows from the definition of commuting graph that, for all vertices $x$ and $y$ of $\commgraph{S}$, we have $x\sim y$ if and only if $xy=yx$. Moreover, we observe that in the definition of commuting graph the semigroup must be non-commutative because, otherwise, we would obtain an empty vertex set. Furthermore, the definition also implies that $\diam{\commgraph{S}}\geqslant 2$ because, since $S$ must be non-commutative, then there exist $x,y\in S$ such that $xy\neq yx$, which implies that $\diam{\commgraph{S}}\geqslant\dist{\commgraph{S}}{x}{y}>1$.

%The next lemma, whose proof is straightforward, shows the relationship between the largest cliques in a commuting graph (respectively, extended commuting graph) of a semigroup and the size of its largest commutative subsemigroups.
%
%\begin{lemma}\label{preli: largest cliques, commutative subsemigroups}
%	Let $S$ be a finite non-commutative semigroup and let $\centre{S}\subseteq T\subseteq S$. The following are equivalent:
%	\begin{enumerate}
%		\item $T$ is a commutative subsemigroup of $S$ of maximum size.
%		
%		\item $T\setminus\centre{S}$ is a clique in $\commgraph{S}$ of maximum size.
%		
%		\item $T$ is a clique in $\extendedcommgraph{S}$ of maximum size.
%	\end{enumerate}
%\end{lemma}
%
%It follows from Lemma~\ref{preli: largest cliques, commutative subsemigroups} that, if $S$ is a finite non-commutative semigroup and $T$ is a maximum-size commutative subsemigroup of $S$, then $\cliquenumber{\commgraph{S}}=\abs{T}-\abs{\centre{S}}$ and $\cliquenumber{\extendedcommgraph{S}}=\abs{T}$.

%The following definitions are of concepts that were first defined (in \cite{Commuting_graph_T_X}) specifically for commuting graphs of semigroups.

%Let $S$ be a non-commutative semigroup. A \defterm{left path} in $\commgraph{S}$ is a path $x_1,\ldots,x_n$ in $\commgraph{S}$ such that $x_1\neq x_n$ and $x_1x_i=x_nx_i$ for all $i\in\braces{1,\ldots,n}$. If $\commgraph{S}$ contains left paths, then the \defterm{knit degree} of $S$, denoted $\knitdegree{S}$, is the length of a shortest left path in $\commgraph{S}$.

\section{Diameters of commuting graphs of partial transformation semigroups}\label{sec: diameter P(X)}

Throughout this section we are going to assume that $X$ is a finite set. The \defterm{partial transformation semigroup} on $X$, denoted $\ptr{X}$, is the semigroup formed by all the partial transformations on $X$ (that is, all the functions whose domain and image are both contained in $X$) and whose multiplication is the composition of functions. The \defterm{transformation semigroup} on $X$, denoted $\tr{X}$, is the semigroup formed by all the (full) transformations on $X$ (that is, all the functions whose domain is $X$ and whose image is contained in $X$) and whose multiplication is the composition of functions.

The aim of this section is to determine the diameter of the commuting graph of $\ptr{X}$: we will see that, when $\abs{X}\geqslant 2$, the diameter of $\commgraph{\ptr{X}}$ is equal to the diameter of $\commgraph{\tr{X}}$, which was previously determined by Araújo, Kinyon and Konieczny \cite{Commuting_graph_T_X}. More specifically, we will see that $\commgraph{\ptr{X}}$ is connected (that is, that $\diam{\commgraph{\ptr{X}}}$ is finite) if and only if $\abs{X}$ is not prime. In this case we have that $\diam{\commgraph{\ptr{X}}}=4$, if $\abs{X}=4$, and $\diam{\commgraph{\ptr{X}}}=5$, if $\abs{X}\neq 4$.

We can easily verify that $\centre{\ptr{X}}=\set{\emptyset,\id{X}}$. This implies that $\ptr{X}$ is non-commutative if and only if $\ptr{X}\neq\set{\emptyset,\id{X}}$, which happens precisely when $\abs{X}\geqslant 2$. Thus $\commgraph{\ptr{X}}$ is only defined when $\abs{X}\geqslant 2$. Moreover, the fact that $\centre{\ptr{X}}=\set{\emptyset,\id{X}}$ also implies that there are only two partial transformations that are not vertices of $\commgraph{\ptr{X}}$: $\emptyset$ and $\id{X}$.

In Theorem~\ref{P(X): diameter} we will see that $\diam{\commgraph{\ptr{X}}}=\infty$ (that is, that $\commgraph{\ptr{X}}$ is not connected) if and only if $\abs{X}$ is prime. In the meantime, we take steps to obtain the diameter of $\commgraph{\ptr{X}}$ when $\abs{X}$ is composite. The reasoning is divided into two parts. First, we obtain an upper bound for the distance between vertices of $\commgraph{\ptr{X}}$; and then we find two partial transformations of $\ptr{X}$ whose distance in $\commgraph{\ptr{X}}$ matches the upper bound we obtained.

We begin with the determination of an upper bound for the distance between vertices of $\commgraph{\ptr{X}}$. In Lemma~\ref{P(X): upper bound diam, a,b€P(X)-(T(X) U 0)} we consider the distance between two partial transformations that are not full transformations, in Lemma~\ref{P(X): upper bound diam, a€T(X)-S(X) b€P(X)-(T(X) U 0)} we consider the distance between a full transformation that is not a permutation and a partial transformation that is not a full transformation, and in Lemma~\ref{P(X): upper bound diam, a€S(X)-1 b€P(X)-(T(X) U 0)} we consider the distance between a permutation and a partial transformation that is not a full transformation. We do not need to find an upper bound for the distance between full transformations because in \cite[Theorem 2.22]{Commuting_graph_T_X} the authors determined the diameter of $\commgraph{\tr{X}}$, which is the subgraph of $\commgraph{\ptr{X}}$ whose vertices are precisely the vertices of $\commgraph{\ptr{X}}$ that are full transformations. This means that the diameter of $\commgraph{\tr{X}}$ is an upper bound for the distance between full transformations (in $\commgraph{\ptr{X}}$).

Below we provide the diameter of $\commgraph{\tr{X}}$. We will see that the corresponding result for $\commgraph{\ptr{X}}$ is exactly parallel.

\begin{theorem}[{\cite[Theorem 2.22]{Commuting_graph_T_X}}]\label{P(X): diam T(X)}
	Suppose that $\abs{X}\geqslant 2$. Then
	\begin{enumerate}
		\item If $\abs{X}$ is prime, then $\commgraph{\tr{X}}$ is not connected.
		
		\item If $\abs{X}=4$, then $\diam{\commgraph{\tr{X}}}=4$.
		
		\item If $\abs{X}\geqslant 6$ is composite, then $\diam{\commgraph{\tr{X}}}=5$.
	\end{enumerate}
\end{theorem}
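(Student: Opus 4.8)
This is the theorem of Araújo, Kinyon and Konieczny cited above; here is a sketch of how one would argue.

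\medskip\noindent\emph{The prime case.}
Let $n=\abs{X}$ be prime and let $\sigma\in\sym{X}$ be an $n$-cycle. If $\alpha\in\tr{X}$ satisfies $\alpha\sigma=\sigma\alpha$ then $(\im\alpha)\sigma=\im(\alpha\sigma)=\im(\sigma\alpha)=(X)\alpha=\im\alpha$, so $\im\alpha$ is a non-empty $\sigma$-invariant subset of $X$; as $\sigma$ is an $n$-cycle, the only such subset is $X$ itself, whence $\alpha$ is a permutation and so lies in the centraliser of $\sigma$ in $\sym{X}$, namely $\angbracks{\sigma}$. Thus the centraliser of $\sigma$ in $\tr{X}$ is the cyclic group $\angbracks{\sigma}$, and, $n$ being prime, every non-identity element of $\angbracks{\sigma}$ is again an $n$-cycle with the same centraliser. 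Hence the connected component of $\sigma$ in $\commgraph{\tr{X}}$ is exactly $\angbracks{\sigma}\setminus\set{\id{X}}$, which, for example, contains no constant map; so $\commgraph{\tr{X}}$ is disconnected.

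\medskip\noindent\emph{The upper bound when $n$ is composite.}
The elementary facts used are: an element commutes with its powers; disjoint permutations commute; a permutation is adjacent in $\commgraph{\tr{X}}$ to the constant map $c_i$ exactly when it fixes $i$; an idempotent is adjacent to $c_i$ for every $i$ in its image; and a $k$-cycle fixes $n-k$ points. One argues by cases on whether the two given vertices $\alpha,\beta$ are permutations, reducing each of them to a ``tame'' vertex. A non-permutation is adjacent to some non-identity idempotent power of itself, and idempotents sit within a short distance of constant maps. A non-central permutation that has a fixed point is adjacent to a constant map directly. A fixed-point-free permutation is adjacent to a power of itself that is either fixed-point-bearing or a product of $\ell$-cycles for some prime $\ell\mid n$; in the latter case that power is in turn adjacent to any one of its constituent $\ell$-cycles $\gamma$, and $\gamma$, having $n-\ell\geqslant 1$ fixed points, is adjacent to many constant maps and, when its support can be taken disjoint from that of a constituent cycle $\gamma'$ arising the same way from $\beta$, directly to $\gamma'$. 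Tracking these reductions and choosing the intermediate vertices to overlap whenever possible yields $\dist{\commgraph{\tr{X}}}{\alpha}{\beta}\leqslant 5$ for all vertices $\alpha,\beta$, with the configurations in which the constituent cycles cannot be separated treated by hand. When $n=4$ the only prime dividing $n$ is $2$, the relevant powers are double transpositions, and any two double transpositions on four points commute, so the reductions collapse onto a Klein four-subgroup and the bound sharpens to $4$.

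\medskip\noindent\emph{The lower bound, and the main obstacle.}
For $n\geqslant 6$ composite one exhibits an explicit pair realising distance $5$ --- for instance a pair of $n$-cycles, each adjacent only to its own powers, whose relevant prime-power reductions share no invariant proper subset, forcing every walk between them through the ``bottleneck'' of single-cycle permutations and constants --- and checks, by tracking the admissible vertices step by step, that four steps do not suffice. For $n=4$ one instead produces a pair at distance exactly $4$ and rules out distance $5$ via the (finite) case analysis. The genuinely delicate point is the \emph{tightness} of the composite upper bound: the naive strategy of reducing to idempotent powers and then to constant maps only yields distance $6$, and pushing this down to $5$ rests on the observation that the single constituent cycles of two fixed-point-free permutations can usually be chosen with disjoint supports, hence adjacent, together with a separate treatment of the remaining configurations. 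The exceptional value $4$ at $n=4$ is the other subtle point, since one must verify both that it is attained and that it is never exceeded.
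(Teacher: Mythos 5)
The paper does not actually prove this statement: Theorem~\ref{P(X): diam T(X)} is imported wholesale from Araújo, Kinyon and Konieczny \cite{Commuting_graph_T_X}, so there is no internal proof to compare against; what can be judged is whether your sketch would reconstruct their theorem. Your prime case is correct and is essentially the standard argument (it is also the argument the paper itself adapts to $\ptr{X}$ in part~1 of Theorem~\ref{P(X): diameter} via Lemma~\ref{P(X): what commutes with cycles length |X|}): the centraliser of an $\abs{X}$-cycle in $\tr{X}$ is the cyclic group it generates, and primality traps its connected component. The elementary adjacency facts you list for the upper bound are also correct.

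The genuine gap is in the composite lower bound. Your proposed witness, ``a pair of $n$-cycles'', is unjustified and very likely not a distance-$5$ pair: each $n$-cycle is adjacent exactly to its non-identity powers, and suitable powers of the two cycles tend to admit a common non-central centralising element, giving a path of length $4$. For instance, if $n=2m$ then $\sigma^m$ and $\tau^m$ are fixed-point-free involutions generating a dihedral group $H$; when $\sigma^m\tau^m$ has even order its half-power is a non-identity permutation commuting with both, and even when $H$ acts regularly (as with $(1\,2)(3\,4)(5\,6)$ and $(2\,3)(4\,5)(6\,1)$ on six points) the centraliser of $H$ in $\sym{X}$ is a non-trivial regular group, so $\sigma-\sigma^m-\gamma-\tau^m-\tau$ is available. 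This is precisely why the known extremal pairs have a different shape: at least one endpoint is a ``chain plus cycle'' transformation, whose only commuting permutation is $\id{X}$ and whose commuting idempotents are completely determined (the facts quoted in this paper as Lemmata~\ref{P(X): upper bound, chain + cycle must commute with id-->S(X)} and~\ref{P(X): idempotent that commutes with chain+cycle}); that is what creates a bottleneck that no length-$4$ walk can cross, and for $\abs{X}=6,8$ the exclusion of short paths was in fact done computationally. Relatedly, your reduction of the upper bound from $6$ to $5$ is left at ``the constituent cycles can usually be chosen with disjoint supports'', which is exactly where the work lies (already for $n=6$ the supports of the constituent $3$-cycles of two fixed-point-free squares can pairwise intersect). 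So, as written, the sketch neither establishes the bound $5$ nor exhibits a pair attaining it, and the witness you do name would need to be replaced by one of the chain-plus-cycle type.
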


\begin{lemma}\label{P(X): upper bound diam, a,b€P(X)-(T(X) U 0)}
	Suppose that $\abs{X}\geqslant 4$. Let $\alpha,\beta\in\ptr{X}\setminus\parens{\tr{X}\cup\set{\emptyset}}$. Then $\dist{\ptr{X}}{\alpha}{\beta}\leqslant 4$.
\end{lemma}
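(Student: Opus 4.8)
The plan is to route from $\alpha$ to $\beta$ through partial transformations of rank one, exploiting the elementary fact that two partial transformations commute as soon as both of their products vanish. I would first record the observation: if $\mu,\nu\in\ptr{X}$ satisfy $\im\mu\cap\dom\nu=\emptyset$ and $\im\nu\cap\dom\mu=\emptyset$, then $\mu$ and $\nu$ cannot be composed in either order to a non-empty map, so $\mu\nu=\nu\mu=\emptyset$ and hence $\mu,\nu$ commute; if moreover each of $\mu,\nu$ has a one-element domain then neither is $\emptyset$ nor $\id{X}$ (here $\abs{X}\geqslant 4$), so both are vertices of $\commgraph{\ptr{X}}$, and therefore $\mu\sim\nu$.

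Next I would attach to each of $\alpha,\beta$ a rank-one neighbour. Since $\alpha\notin\tr{X}\cup\set{\emptyset}$ we have $\emptyset\neq\dom\alpha\subsetneq X$, and consequently $\im\alpha\subsetneq X$ because $\abs{\im\alpha}\leqslant\abs{\dom\alpha}\leqslant\abs{X}-1$. Choosing $a_\alpha\in X\setminus\im\alpha$ and $b_\alpha\in X\setminus\dom\alpha$, let $\gamma_\alpha$ be the partial transformation with domain $\set{a_\alpha}$ mapping $a_\alpha$ to $b_\alpha$; then $\dom\gamma_\alpha\cap\im\alpha=\emptyset$ and $\im\gamma_\alpha\cap\dom\alpha=\emptyset$, so the observation gives $\alpha\sim\gamma_\alpha$. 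Symmetrically, choosing $a_\beta\in X\setminus\im\beta$ and $b_\beta\in X\setminus\dom\beta$ produces a partial transformation $\gamma_\beta$ with domain $\set{a_\beta}$, image $\set{b_\beta}$, and $\beta\sim\gamma_\beta$.

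It then remains to join $\gamma_\alpha$ to $\gamma_\beta$ in at most two further steps. Using $\abs{X}\geqslant 4$, I would pick $u\in X\setminus\set{a_\alpha,a_\beta}$ and $t\in X\setminus\set{b_\alpha,b_\beta}$ and take $\delta$ to be the partial transformation with domain $\set{t}$ mapping $t$ to $u$. Then $\im\delta\cap\dom\gamma_\alpha=\set{u}\cap\set{a_\alpha}=\emptyset$ and $\dom\delta\cap\im\gamma_\alpha=\set{t}\cap\set{b_\alpha}=\emptyset$, so $\delta\sim\gamma_\alpha$; likewise $\delta\sim\gamma_\beta$. Concatenating yields
\begin{displaymath}
	\alpha\sim\gamma_\alpha\sim\delta\sim\gamma_\beta\sim\beta
\end{displaymath}
in $\commgraph{\ptr{X}}$, and since a chain under $\sim$ is a walk, this forces $\dist{\ptr{X}}{\alpha}{\beta}\leqslant 4$.

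I do not expect a genuine obstacle: the only care needed is the bookkeeping — verifying that the auxiliary rank-one maps really are vertices of $\commgraph{\ptr{X}}$ (this, together with the existence of the points $u$ and $t$, is exactly where the hypothesis $\abs{X}\geqslant 4$ is used; in fact $\abs{X}\geqslant 3$ would suffice), and noting that the displayed $\sim$-chain does bound the distance. The new combinatorial ideas advertised in the introduction will only be needed for the later lemmas, which deal with full transformations.
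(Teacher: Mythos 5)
Your proposal is correct and follows essentially the same route as the paper: attach to each of $\alpha,\beta$ a rank-one partial map whose domain avoids the image and whose image avoids the domain (so all products are the empty map), and link the two rank-one maps through a third rank-one map chosen, via $\abs{X}\geqslant 4$, to avoid their domains and images, giving the chain $\alpha\sim\gamma_\alpha\sim\delta\sim\gamma_\beta\sim\beta$ of length at most $4$. Your side remark that $\abs{X}\geqslant 3$ would suffice for this particular lemma is also accurate, though the paper does not pursue it.
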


\begin{proof}
	Due to the fact that $\alpha,\beta\notin\tr{X}$, we have that the sets $X\setminus\dom\alpha$, $X\setminus\im\alpha$, $X\setminus\dom\beta$ and $X\setminus\im\beta$ are non-empty. Let $x'\in X\setminus\dom\alpha$, $x\in X\setminus\im\alpha$, $y'\in X\setminus\dom\beta$ and $y\in X\setminus\im\beta$. Then we have
	\begin{displaymath}
		\alpha\sim\begin{pmatrix}
			x\\x'
		\end{pmatrix} \quad \text{and} \quad \beta\sim\begin{pmatrix}
			y\\y'
		\end{pmatrix}
	\end{displaymath}
	since all the relevant products are equal to the empty map. Moreover, it follows from the fact that $\abs{X}\geqslant 4$ that the sets $X\setminus\set{x,y}$ and $X\setminus\set{x',y'}$ are non-empty. Let $z'\in X\setminus\set{x,y}$ and $z\in X\setminus\set{x',y'}$. Hence
	\begin{gather*}
		\begin{pmatrix}
			z\\z'
		\end{pmatrix}\begin{pmatrix}
			x\\x'
		\end{pmatrix}=\emptyset=\begin{pmatrix}
			x\\x'
		\end{pmatrix}\begin{pmatrix}
			z\\z'
		\end{pmatrix}\\
		\shortintertext{and}
		\begin{pmatrix}
			z\\z'
		\end{pmatrix}\begin{pmatrix}
			y\\y'
		\end{pmatrix}=\emptyset=\begin{pmatrix}
			y\\y'
		\end{pmatrix}\begin{pmatrix}
			z\\z'
		\end{pmatrix}.
	\end{gather*}
	Therefore we have
	\begin{displaymath}
		\alpha\sim\begin{pmatrix}
			x\\x'
		\end{pmatrix}\sim
		\begin{pmatrix}
			z\\z'
		\end{pmatrix}\sim
		\begin{pmatrix}
			y\\y'
		\end{pmatrix}\sim\beta,
	\end{displaymath}
	which implies that there is a path from $\alpha$ to $\beta$ in $\commgraph{\ptr{X}}$ whose length is at most $4$, that is, $\dist{\ptr{X}}{\alpha}{\beta}\leqslant 4$.
\end{proof}

\begin{lemma}\label{P(X): upper bound diam, a€T(X)-S(X) b€P(X)-(T(X) U 0)}
	Suppose that $\abs{X}\geqslant 4$. Let $\alpha\in\tr{X}\setminus\sym{X}$ and $\beta\in\ptr{X}\setminus\parens{\tr{X}\cup\set{\emptyset}}$. Then $\dist{\ptr{X}}{\alpha}{\beta}\leqslant 4$.
\end{lemma}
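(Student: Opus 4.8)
The plan is to reduce the distance bound to the previous lemma by finding a single full transformation $\gamma \in \tr{X} \setminus \sym{X}$ that commutes with $\alpha$ and simultaneously lies in $\ptr{X} \setminus (\tr{X} \cup \set{\emptyset})$... wait, that is impossible since $\gamma$ is full. So instead I would aim to connect $\alpha$ to some partial transformation $\delta \in \ptr{X} \setminus (\tr{X} \cup \set{\emptyset})$ with $\alpha \sim \delta$, and then invoke Lemma~\ref{P(X): upper bound diam, a,b€P(X)-(T(X) U 0)} to get $\dist{\ptr{X}}{\delta}{\beta} \leqslant 4$. That would only give a bound of $5$, one too many. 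To get $4$, the idea should be: find a non-full partial transformation $\delta$ with $\alpha \sim \delta$, and then take $z' \in X \setminus (\im\delta \cup \im\beta_{\text{aux}})$ cleverly so that the same "empty-product" padding vertex $\binom{z}{z'}$ sits between $\delta$ and $\beta$, i.e. reuse the structure of the proof of the previous lemma but absorb one step into the $\alpha \sim \delta$ link.

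Concretely, first I would use that $\alpha$ is not a permutation: since $\abs{X} \geqslant 4$ and $\alpha \in \tr{X} \setminus \sym{X}$, the image $\im\alpha$ is a proper subset of $X$, so there is $x \in X \setminus \im\alpha$. Pick any $x' \in \alpha^{-1}(\text{something})$... more carefully, I want a partial transformation of the form $\binom{x}{x'}$ (the map sending $x'$ to $x$ and undefined elsewhere) that commutes with $\alpha$; by the computations in the previous lemma this holds when $\alpha\binom{x}{x'} = \binom{x}{x'}\alpha$, and a clean sufficient condition is $x \notin \im\alpha$ together with $x'\alpha = x$ failing appropriately — I would instead look for a rank-$1$ partial map $\binom{x}{x'}$ with $x \notin \im\alpha$ and $x'$ chosen so that $x'\alpha$ is undefined in the composite, which forces both products to be $\emptyset$ only if $x \notin \dom\alpha$-side conditions hold. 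The robust choice: since $\alpha$ is not surjective, fix $x \in X \setminus \im\alpha$; then $\binom{x}{x'}\alpha = \emptyset$ for every $x'$ because $x \notin \im\alpha$ means $x\alpha$... no, $x \in \dom\alpha$ possibly. Let me use the genuinely safe fact from the paper's style: choose $x \in X \setminus \im\alpha$ and $x' = x$; then $\binom{x}{x}$ and $\alpha$ commute iff $x\alpha = x$ or $x \notin \dom\alpha$, neither guaranteed. The correct and standard move here is: since $\alpha \notin \sym{X}$, there exist distinct $p, q$ with $p\alpha = q\alpha$; set $\delta = \binom{p\alpha}{p}$-type constant-ish map — but cleanest is to mimic $\tr{X}$ theory where $\alpha$ commutes with the constant map $c_x$ onto a fixed point.

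So the key steps, in order: (1) Because $\abs{X} \geqslant 4$ and $\alpha$ is a non-surjective full transformation, produce an element $\delta \in \ptr{X} \setminus (\tr{X} \cup \set{\emptyset})$ of rank $1$ with $\alpha \sim \delta$ — I would take $\delta$ to be the partial identity on a single point $x$ fixed by $\alpha$ if $\alpha$ has a fixed point, or a suitable rank-$1$ partial constant built from a non-injectivity witness of $\alpha$ otherwise; in all cases $\delta$ has a non-full domain. (2) Apply Lemma~\ref{P(X): upper bound diam, a,b€P(X)-(T(X) U 0)} to $\delta$ and $\beta$: both are in $\ptr{X} \setminus (\tr{X} \cup \set{\emptyset})$, so $\dist{\ptr{X}}{\delta}{\beta} \leqslant 4$. (3) Conclude $\dist{\ptr{X}}{\alpha}{\beta} \leqslant 1 + 4 = 5$ — and here I realize the bound in the statement is $4$, so one more idea is needed: I would instead not relay through a generic path but observe that in the proof of the previous lemma the first vertex adjacent to $\delta$ is again a rank-$1$ partial map $\binom{y}{y'}$ with $y \notin \im\delta$, and since $\delta$ itself is rank $1$ we can arrange $\delta \sim \binom{y}{y'}$ to coincide with the $\alpha$-side, so the path from $\alpha$ to $\beta$ reads $\alpha \sim \binom{y}{y'} \sim \binom{z}{z'} \sim \binom{y_\beta}{y'_\beta} \sim \beta$ of length $4$, where $\binom{y}{y'}$ is chosen with $y \notin \im\alpha$ (possible as $\alpha$ is not surjective) and $y'$ such that both $\binom{y}{y'}\alpha$ and $\alpha\binom{y}{y'}$ are empty.

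The main obstacle is step (1)/(3): ensuring that $\binom{y}{y'}$ genuinely commutes with the full transformation $\alpha$ — unlike the all-partial case where every product collapses to $\emptyset$ automatically, here $\alpha\binom{y}{y'}$ need not be empty. I expect the fix to be: choose $y \in X \setminus \im\alpha$ (so $\binom{y}{y'}\alpha = \emptyset$ since nothing maps to $y$ under $\alpha$... rather since $y \notin \im\alpha$, for the composite $\binom{y}{y'}\alpha$ we need $y \in \dom\alpha$, which may hold — so actually $\binom{y}{y'}\alpha = \binom{y'\alpha}{y'}$ if we apply $\binom{y}{y'}$ first). Reading the paper's convention (functions compose left-to-right, as $\alpha\beta$ means first $\alpha$ then $\beta$ given the $\binom{x}{x'}$ notation with top = image), $\binom{y}{y'}\beta$ sends $y' \mapsto y \mapsto y\beta$; so $\binom{y}{y'}\alpha = \emptyset$ requires $y \notin \dom\alpha = X$, impossible. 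Thus the commuting witness must instead satisfy $y\alpha = y$ and $y'$ chosen with $y' \notin \im\alpha$; then $\binom{y}{y'}\alpha = \binom{y\alpha}{y'} = \binom{y}{y'}$ and $\alpha\binom{y}{y'} = \binom{y}{\ast}$ where $\ast$ ranges over $\alpha^{-1}(y')= \emptyset$, so $\alpha\binom{y}{y'} = \emptyset \ne \binom{y}{y'}$ — still not commuting. Hence the real resolution: take $\alpha$ to have a fixed point $w$ (true if, e.g., $\im\alpha \cap \text{Fix}\alpha \ne \emptyset$), let $\delta = \binom{w}{w}$ (partial identity at $w$); then $\delta\alpha = \binom{w\alpha}{w} = \binom{w}{w} = \delta$ and $\alpha\delta = \binom{w}{w\alpha^{-1}\text{-preimage in }\{w\}}$... this commutes iff $w$'s only preimage relevant is $w$ — not generally. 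I therefore expect the genuine argument to split on whether $\alpha$ has a fixed point and, if not, to use a $2$-element cycle-free structure, and the hard part will be handling $\alpha$ with no convenient fixed point by instead routing through a rank-$1$ \emph{full} constant map $c_w$ (which always commutes with $\alpha$ when $w\alpha = w$) — and since such $w$ need not exist either, ultimately connecting $\alpha$ first to an idempotent in its $\mathcal{J}$-class. I would structure the final proof around: pick an idempotent $e \leqslant \alpha$-compatible of rank $< \abs{X}$, note $\alpha \sim e'$ for a suitable partial idempotent $e'$ with non-full domain, then apply the previous lemma, tightening the count by choosing the intermediate padding vertices to overlap.
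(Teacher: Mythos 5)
Your plan never closes, and its central step fails in general. You propose to attach to $\alpha$ some $\delta\in\ptr{X}\setminus\parens{\tr{X}\cup\set{\emptyset}}$ (a rank-one map $\binom{y}{y'}$, a partial identity at a fixed point, or ``a suitable partial idempotent with non-full domain'') and then either relay through the previous lemma or overlap the padding vertices. But a full transformation outside $\sym{X}$ need not commute with \emph{any} non-empty, non-full partial transformation: the paper proves exactly this in Lemma~\ref{P(X): upper bound, chain + cycle must commute with empty map-->P(X)-T(X)} for $\alpha=\chain{y_1 {\ldots} y_k x_1}\cycle{x_1 {\ldots} x_m}$, which lies in $\tr{X}\setminus\sym{X}$; for such $\alpha$ your step (1) has no solution at all. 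Your own computations already display the symptom (since $\dom\alpha=X$, a product of the form $\binom{y}{y'}\alpha$ is never empty, so the ``both products are $\emptyset$'' trick that works between two non-full maps is unavailable on the $\alpha$ side), and your fixed-point/constant-map fallbacks hit the same wall, which you notice but never resolve. Even when such a $\delta$ does exist, relaying through the previous lemma only gives $1+4=5$, and the ``tightening'' that would recover $4$ is precisely the step you could not carry out. (A minor additional slip: in the paper's notation the top entry of $\binom{x}{x'}$ is the domain point and the bottom its image, not the other way around.)

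The paper's proof does not try to hang a non-full partial map on $\alpha$; it builds the path from the $\beta$ end and lets the structure of an idempotent power of $\alpha$ dictate the middle vertex. Pick $x\in X\setminus\im\beta$ and $x'\in X\setminus\dom\beta$, so $\beta\sim\binom{x}{x'}$ with both products empty. Replace $\alpha$ by an idempotent power $\alpha^m\neq\id{X}$ (one extra edge, needed only when $\alpha$ is not itself idempotent). Then construct $\gamma$ commuting with this idempotent and annihilating $\binom{x}{x'}$: if the idempotent has rank $1$ with image $\set{y}$, $\gamma$ is a \emph{full} transformation fixing $x,x',y$ and sending everything else to a fourth point $y'$; if it has rank at least $2$, $\gamma$ is a partial transformation with domain $\set{y}\alpha^{-1}$ and one-point image, the point $y$ or the block being chosen according to where $x$ and $x'$ sit relative to the blocks $\set{y}\alpha^{-1}$. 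This gives $\alpha\sim\alpha^m\sim\gamma\sim\binom{x}{x'}\sim\beta$, hence distance at most $4$. The idea missing from your proposal is that the vertex adjacent to (a power of) $\alpha$ may have to be a full transformation, or a partial map tailored simultaneously to the idempotent and to the pair $(x,x')$ coming from $\beta$; only the vertex next to $\beta$ needs to act by empty products.
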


\begin{proof}
	It follows from the fact that $\beta\notin\tr{X}$ that $X\setminus\dom\beta\neq\emptyset$ and $X\setminus\im\beta\neq\emptyset$. Let $x'\in X\setminus\dom\beta$ and $x\in X\setminus\im\beta$. We have that
	\begin{displaymath}
		\beta\sim\begin{pmatrix}
			x\\x'
		\end{pmatrix}
	\end{displaymath}
	since the relevant products are equal to the empty map. We divide the rest of the proof into 3 cases.
	
	\smallskip
	
	\textit{Case 1:} Assume that $\alpha$ is an idempotent of rank 1. Let $y\in X$ be such that $\im\alpha=\set{y}$ and let $y'\in X\setminus\set{x,x',y}$ (note that such an element exists because $\abs{X}\geqslant 4$). Let $\gamma\in\tr{X}$ be the transformation defined by
	\begin{displaymath}
		z\gamma=\begin{cases}
			z& \text{if } z\in\set{x,x',y},\\
			y'& \text{if } z\in X\setminus\set{x,x',y}
		\end{cases}
	\end{displaymath}
	for all $z\in X$. Then we have
	\begin{align*}
		\parens{z\alpha}\gamma&=y\gamma &\bracks{\text{because } \im\alpha=\set{y}}\\
		&=y\\
		&=\parens{z\gamma}\alpha &\bracks{\text{because } \im\alpha=\set{y}}
	\end{align*}
	for all $z\in X$; that is, $\alpha\gamma=\gamma\alpha$. Additionally, the fact that $\set{x}\gamma^{-1}=\set{x}$ and $x'\gamma=x'$ implies that
	\begin{displaymath}
		\gamma\begin{pmatrix}
			x\\x'
		\end{pmatrix}=\begin{pmatrix}
			x\\x'
		\end{pmatrix}=\begin{pmatrix}
			x\\x'
		\end{pmatrix}\gamma.
	\end{displaymath}
	Therefore we have
	\begin{displaymath}
		\alpha\sim\gamma\sim\begin{pmatrix}
			x\\x'
		\end{pmatrix}\sim\beta
	\end{displaymath}
	and, consequently, we can conclude that there is a path from $\alpha$ to $\beta$ in $\commgraph{\ptr{X}}$ whose length is at most $3$. Thus $\dist{\commgraph{\ptr{X}}}{\alpha}{\beta}\leqslant 3$.
	
	\smallskip
	
	\textit{Case 2:} Assume that $\alpha$ is an idempotent of rank at least $2$. In the next two sub-cases we will see that $\dist{\commgraph{\ptr{X}}}{\alpha}{\beta}\leqslant 3$.
	
	\smallskip
	
	\textsc{Sub-case 1:} Assume that there exists $y\in\im\alpha$ such that $x,x'\in X\setminus \parens{\set{y}\alpha^{-1}}$ (that is, such that $x\alpha\neq y$ and $x'\alpha\neq y$). Let $\gamma\in\ptr{X}\setminus\tr{X}$ be such that $\dom\gamma=\set{y}\alpha^{-1}$ and $\im\gamma=\set{y}$. We have that
	\begin{align*}
		\dom\alpha\gamma &=\parens{\im\alpha\cap\dom\gamma}\alpha^{-1}\\
		&=\parens{\im\alpha\cap\parens{\set{y}\alpha^{-1}}}\alpha^{-1} &\bracks{\text{because } \dom\gamma=\set{y}\alpha^{-1}}\\
		&=\set{y}\alpha^{-1} &\kern -0.3cm \bracks{\text{because } \alpha\in\idemp{\tr{X}} \text{ and so } z\alpha=z \text{ for all } z\in\im\alpha}\\
		%&\bracks{\text{because } \alpha \text{ is an idempotent and } y\alpha=y}\\
		&=\dom\gamma\\
		&=\parens{\im\gamma}\gamma^{-1}\\
		&=\parens{\im\gamma\cap X}\gamma^{-1}\\
		&=\parens{\im\gamma\cap\dom\alpha}\gamma^{-1} &\bracks{\text{because } \alpha\in\tr{X}}\\
		&=\dom\gamma\alpha
	\end{align*}
	%$z=w\alpha=w\alpha^2=z\alpha=y$
	and for all $z\in\dom\alpha\gamma=\dom\gamma\alpha=\set{y}\alpha^{-1}$ we have
	\begin{align*}
		\parens{z\alpha}\gamma &=y &\bracks{\text{because } \im\gamma=\set{y}}\\
		&=y\alpha &\bracks{\text{because } \alpha\in\idemp{\tr{X}} \text{ and so } y\alpha=y}\\
		&=\parens{z\gamma}\alpha. &\bracks{\text{because } \im\gamma=\set{y}}
	\end{align*}
	Consequently, $\alpha\gamma=\gamma\alpha$. Additionally, the fact that $x,x'\in X\setminus\parens{\set{y}\alpha^{-1}}$ ensures that $x\in X\setminus\set{y}=X\setminus\im\gamma$ (because $y\alpha=y$) and $x'\in X\setminus\dom\gamma$, which implies that
	\begin{displaymath}
		\gamma\begin{pmatrix}
			x\\x'
		\end{pmatrix}=\emptyset=\begin{pmatrix}
			x\\x'
		\end{pmatrix}\gamma.
	\end{displaymath}
	Thus we have
	\begin{displaymath}
		\alpha\sim\gamma\sim\begin{pmatrix}
			x\\x'
		\end{pmatrix}\sim\beta,
	\end{displaymath}
	which allows us to conclude that there is a path from $\alpha$ to $\beta$ in $\commgraph{\ptr{X}}$ whose length is at most $3$. Therefore $\dist{\commgraph{\tr{X}}}{\alpha}{\beta}\leqslant 3$.
	
	\smallskip
	
	\textsc{Sub-case 2:} Assume that for all $y\in\im \alpha$ we have $x\in \set{y}\alpha^{-1}$ or $x'\in \set{y}\alpha^{-1}$ (that is, $x\alpha=y$ or $x'\alpha=y$). This implies that $\abs{\im\alpha}\leqslant 2$ and, since we also have $\abs{\im\alpha}\geqslant 2$ (recall that the rank of $\alpha$ is at least $2$), then we must have $\abs{\im\alpha}=2$. Assume that $\im\alpha=\set{y,y'}$. (We observe that, since $\alpha$ is an idempotent, then we have $y\alpha=y$ and $y'\alpha=y'$.) We have that one of $x,x'$ is in $\set{y}\alpha^{-1}$ and the other one is in $\set{y'}\alpha^{-1}$. We can assume, without loss of generality, that $x\in \set{y}\alpha^{-1}$ and $x'\in \set{y'}\alpha^{-1}$ (that is, $x\alpha=y$ and $x'\alpha=y'$). Let $\gamma\in\ptr{X}$ be such that $\dom\gamma=\set{y}\alpha^{-1}$ and $\im\gamma=\set{y'}$. We have that
	\begin{align*}
		\dom\alpha\gamma &=\parens{\im\alpha\cap\dom\gamma}\alpha^{-1}\\
		&=\parens{\set{y,y'}\cap \parens{\set{y}\alpha^{-1}}}\alpha^{-1} &\bracks{\text{because } \im\alpha=\set{y,y'} \text{ and } \dom\gamma=\set{y}\alpha^{-1}}\\
		&=\set{y}\alpha^{-1} &\bracks{\text{because } y\alpha=y \text{ and } y'\alpha=y'\neq y}\\
		&=\dom\gamma\\
		&=\parens{\im\gamma}\gamma^{-1}\\
		&=\parens{\im\gamma\cap X}\gamma^{-1}\\
		&=\parens{\im\gamma\cap\dom\alpha}\gamma^{-1} &\bracks{\text{because } \alpha\in\tr{X}}\\
		&=\dom\gamma\alpha
	\end{align*}
	and for all $z\in\dom\alpha\gamma=\dom\gamma\alpha=\set{y}\alpha^{-1}$ we have that
	\begin{align*}
		\parens{z\alpha}\gamma&=y' &\bracks{\text{because } \im\gamma=\set{y'}}\\
		&=y'\alpha\\
		&=\parens{z\gamma}\alpha. &\bracks{\text{because } \im\gamma=\set{y'}}
	\end{align*}
	Hence $\alpha\gamma=\gamma\alpha$. Moreover, $x\alpha=y\neq y'=y'\alpha$ and $x'\alpha=y'\neq y$, which implies that $x\in X\setminus\set{y'}=X\setminus\im\gamma$ and $x'\in X\setminus \parens{\set{y}\alpha^{-1}}=X\setminus\dom\gamma$. Consequently, we have
	\begin{displaymath}
		\gamma\begin{pmatrix}
			x\\x'
		\end{pmatrix}=\emptyset=\begin{pmatrix}
			x\\x'
		\end{pmatrix}\gamma.
	\end{displaymath}
	Therefore we have
	\begin{displaymath}
		\alpha\sim\gamma\sim\begin{pmatrix}
			x\\x'
		\end{pmatrix}\sim\beta,
	\end{displaymath}
	which implies that there is a path from $\alpha$ to $\beta$ in $\commgraph{\ptr{X}}$ whose length is at most $3$. Thus $\dist{\commgraph{\ptr{X}}}{\alpha}{\beta}\leqslant 3$.
	
	\smallskip
	
	\textit{Case 3:} Assume that $\alpha$ is not an idempotent. There exists $m\in\mathbb{N}$ such that $\alpha^m$ is an idempotent. Furthermore, we have that $\alpha\notin\sym{X}$, which implies that $\alpha^m\neq\id{X}$. Hence $\alpha^m$ is a vertex of $\commgraph{\ptr{X}}$ and $\alpha-\alpha^{m}$ is a path in $\commgraph{\ptr{X}}$. Consequently, $\dist{\commgraph{\ptr{X}}}{\alpha}{\alpha^m}=1$. By cases 1 and 2 we also know that $\dist{\commgraph{\ptr{X}}}{\alpha^m}{\beta}\leqslant 3$. Thus $\dist{\commgraph{\ptr{X}}}{\alpha}{\beta}\leqslant 4$.
\end{proof}

\begin{lemma}\label{P(X): upper bound diam, a€S(X)-1 b€P(X)-(T(X) U 0)}
	Suppose that $\abs{X}\geqslant 4$ and is composite. Let $\alpha\in\sym{X}\setminus\set{\id{X}}$ and $\beta\in\ptr{X}\setminus\parens{\tr{X}\cup\set{\emptyset}}$. Then $\dist{\ptr{X}}{\alpha}{\beta}\leqslant 5$. Moreover, if $\abs{X}=4$, then $\dist{\commgraph{\ptr{X}}}{\alpha}{\beta}\leqslant 4$.
\end{lemma}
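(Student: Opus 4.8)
The plan is to reach $\beta$ from $\alpha$ by first walking from $\alpha$ to a non-identity idempotent of $\tr{X}$ and then invoking Lemma~\ref{P(X): upper bound diam, a€T(X)-S(X) b€P(X)-(T(X) U 0)}. The reusable fact is that Cases~1 and~2 in the proof of that lemma actually establish $\dist{\ptr{X}}{\gamma}{\beta}\leqslant 3$ for \emph{every} idempotent $\gamma\in\tr{X}$ with $\gamma\neq\id{X}$ (rank $1$ falls under Case~1, rank at least $2$ under Case~2). So it is enough to connect $\alpha$ to such a $\gamma$ in at most two steps in general, and in at most one step when $\abs{X}=4$. If $\alpha$ fixes some point $y\in X$, then $\alpha$ commutes with the constant transformation with image $\set{y}$, so $\dist{\ptr{X}}{\alpha}{\beta}\leqslant 1+3=4$. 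From now on I assume $\alpha$ is fixed-point-free, with cycles $O_1,\dots,O_k$, each of length at least $2$.

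Next I would dispose of the case where $k\geqslant 2$ and some cycle length divides another, say $\abs{O_i}$ divides $\abs{O_j}$ with $i\neq j$ (in particular, whenever two cycles have equal length): the transformation fixing $X\setminus O_j$ pointwise and mapping $O_j$ onto $O_i$ by a map intertwining the two cyclic actions is an idempotent of $\tr{X}$ of rank $\abs{X}-\abs{O_j}$, a number between $2$ and $\abs{X}-2$, and it commutes with $\alpha$; hence $\dist{\ptr{X}}{\alpha}{\beta}\leqslant 1+3=4$. If instead $k\geqslant 2$ but no cycle length divides another, I would take the permutation $\pi$ agreeing with $\alpha$ on $O_1$ and fixing $X\setminus O_1$ pointwise: then $\pi\in\sym{X}\setminus\set{\id{X}}$ commutes with $\alpha$ and fixes every point of $O_2$, so $\alpha\sim\pi\sim(\text{the constant map onto a chosen point of }O_2)$ and $\dist{\ptr{X}}{\alpha}{\beta}\leqslant 2+3=5$. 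Finally, if $k=1$ — that is, $\alpha$ is an $\abs{X}$-cycle — I would use that $\abs{X}$ is composite: for a divisor $a$ with $1<a<\abs{X}$, the power $\alpha^a$ is a non-identity permutation commuting with $\alpha$ that is a product of $a\geqslant 2$ cycles of common length $\abs{X}/a\geqslant 2$, so by the previous paragraph $\alpha^a$ commutes with some non-identity idempotent $\gamma\in\tr{X}$; then $\alpha\sim\alpha^a\sim\gamma$ gives $\dist{\ptr{X}}{\alpha}{\beta}\leqslant 2+3=5$. This settles the bound $5$ in all cases.

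For the improved bound $4$ when $\abs{X}=4$: among the nontrivial permutations, those with a fixed point and the products of two transpositions (which have two cycles of equal length) are already covered above with bound $4$, so only the $4$-cycles remain. Here I would note that in $\commgraph{\ptr{X}}$ the neighbours of a $4$-cycle $\alpha$ are exactly $\alpha^2$ and $\alpha^3$, and the neighbours of $\alpha^3$ are exactly $\alpha$ and $\alpha^2$; hence any path from $\alpha$ to $\beta$ may be routed through $\alpha^2$, and it suffices to prove $\dist{\ptr{X}}{\tau}{\beta}\leqslant 3$ for $\tau=\alpha^2$, a product of two transpositions. For this I would show first that $\tau$ is at distance at most $2$ from the partial map $\begin{pmatrix}x\\x'\end{pmatrix}$ for all $x\neq x'$ — when $\set{x,x'}$ lies inside one cycle of $\tau$, a transposition of $\tau$ fixes both $x$ and $x'$; when $x$ and $x'$ lie in distinct cycles of $\tau$, one of the two partial bijections sending one cycle of $\tau$ onto the other commutes with $\tau$ and can be chosen with domain avoiding $x'$ and image avoiding $x$. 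Taking $x\in X\setminus\im\beta$ and $x'\in X\setminus\dom\beta$ distinct and using $\beta\sim\begin{pmatrix}x\\x'\end{pmatrix}$ then gives $\dist{\ptr{X}}{\tau}{\beta}\leqslant 3$; distinct such $x,x'$ fail to exist only when $\dom\beta=\im\beta=X\setminus\set{d}$ for a single $d$, and there $\beta$ commutes with the partial identity on $\set{d}$, which is within distance $2$ of $\tau$ through a transposition of $\tau$ fixing $d$.

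The main obstacle will be precisely this $\abs{X}=4$ refinement for $4$-cycles: the generic argument yields only $5$ there, so one is forced into the finer distance-$2$ analysis between a product of two transpositions and the rank-one partial maps $\begin{pmatrix}x\\x'\end{pmatrix}$, together with the degenerate case $\dom\beta=\im\beta=X\setminus\set{d}$. A secondary point to watch throughout is that every intermediate map really is a vertex of $\commgraph{\ptr{X}}$, i.e. differs from $\emptyset$ and $\id{X}$; for the idempotents built in the generic argument this is exactly the rank bound $1<\abs{X}-\abs{O_j}<\abs{X}$ noted above, which also places them under Case~2 (or at worst Case~1) of Lemma~\ref{P(X): upper bound diam, a€T(X)-S(X) b€P(X)-(T(X) U 0)}.
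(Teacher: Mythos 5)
Your proposal is correct, and for the general bound it takes a genuinely different route from the paper. The paper handles any $\alpha$ with at least two cycles by one uniform chain of length $4$, namely $\alpha\sim\id{Y}\sim\id{\set{y}}\sim\gamma\sim\beta$, where $Y$ is a cycle of $\alpha$, $\id{Y}\in\ptr{X}\setminus\tr{X}$, and $\gamma$ is the rank-one partial map sending some $x\in X\setminus\im\beta$ to some $x'\in X\setminus\dom\beta$; a full $\abs{X}$-cycle is reduced to this case via a proper power, and no appeal to the preceding lemma is needed. You instead keep the first hop(s) inside $\tr{X}$ --- a constant map at a fixed point, a cycle-collapsing idempotent when one cycle length divides another, or a two-step detour through a sub-permutation otherwise --- and then invoke the distance-$3$ bound for non-identity idempotents contained in Cases 1--2 of the proof of Lemma~\ref{P(X): upper bound diam, a€T(X)-S(X) b€P(X)-(T(X) U 0)}; note that this genuinely requires citing the proof rather than the statement (which only gives $4$ and would ruin your $2+3$ counts), but the extracted claim is exactly what those cases establish, so this is sound. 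Your route is more case-heavy yet yields the sharper bound $4$ in several sub-cases, while the paper's is shorter and more uniform. For $\abs{X}=4$ the two arguments are close: both pass from a $4$-cycle to $\alpha^2$ (using that the only neighbours of $\alpha$ are $\alpha^2,\alpha^3$, or simply $\alpha\sim\alpha^2$) and exhibit a common neighbour of $\alpha^2$ and the rank-one map above; the paper splits according to whether $x,x'$ lie in the same $\alpha^2$-cycle, using the partial identity on the other cycle or the intertwining partial bijection between the two cycles, and since its case split allows $x=x'$ it has no degenerate case, whereas your insistence on $x\neq x'$ forces the extra (correctly handled) sub-case $\dom\beta=\im\beta=X\setminus\set{d}$; your use of the complementary transposition in place of the partial identity is an equally valid variant.
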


\begin{proof}
	
	We divide this proof into two parts. In the first one we will see that $\dist{\commgraph{\ptr{X}}}{\alpha}{\beta}\leqslant 5$; and in the second one we will see that, if $\abs{X}=4$, then $\dist{\commgraph{\ptr{X}}}{\alpha}{\beta}\leqslant 4$.
	
	Before we start part 1 we note that, due to the fact that $\beta\in\ptr{X}\setminus\tr{X}$, there exist $x\in X\setminus\im\beta$ and $x'\in X\setminus\dom\beta$. This implies that
	\begin{displaymath}
		\beta\sim\begin{pmatrix}
			x\\x'
		\end{pmatrix}
	\end{displaymath}
	because the relevant products are equal to $\emptyset$.
	
	\medskip
	
	\textbf{Part 1.} We are going to establish that $\dist{\commgraph{\ptr{X}}}{\alpha}{\beta}\leqslant 5$. We have that $\alpha\in\sym{X}$, which means that $\alpha$ can be written as a product of cycles. We use this information to divide this part of the proof into two cases.
	
	\smallskip
	
	\textit{Case 1:} Assume that $\alpha$ has at least two cycles. Let $\cycle{y_1 y_2 {\ldots} y_m}$ be a cycle in $\alpha$ and let $Y=\set{y_1,\ldots,y_m}\subsetneq X$. We have that
	\begin{align*}
		\dom\alpha\id{Y} &=\parens{\im\alpha \cap \dom\id{Y}}\alpha^{-1}\\
		&=\parens{X\cap Y}\alpha^{-1} &\bracks{\text{because } \alpha\in\sym{X} \text{ and } \dom\id{Y}=Y}\\
		&=Y\alpha^{-1}\\
		&=Y &\bracks{\text{because } \cycle{y_1 y_2 {\ldots} y_m} \text{ is a cycle in } \alpha}\\
		&=Y\parens{\id{Y}}^{-1}&\bracks{\text{because } \dom\id{Y}=\im\id{Y}=Y}\\
		&=\parens{Y\cap X}\parens{\id{Y}}^{-1}\\
		&=\parens{\im\id{Y}\cap\dom\alpha}\parens{\id{Y}}^{-1} &\bracks{\text{because } \alpha\in\sym{X} \text{ and } \im\id{Y}=Y}\\
		&=\dom\id{Y}\alpha
	\end{align*}
	and $\parens{z\alpha}\id{Y}= z\alpha= \parens{z\id{Y}}\alpha$ for all $z\in\dom\alpha\id{Y}=\dom\id{Y}\alpha$. Hence $\alpha\id{Y}=\id{Y}\alpha$. Let $y\in X\setminus\set{x,x'}$. Since $\id{Y},\id{\set{y}}\in\idemp{\psym{X}}$ (a commutative subsemigroup of $\ptr{X}$) we have that $\id{Y}\id{\set{y}}=\id{\set{y}}\id{Y}$ and, since $y\in X\setminus\set{x,x'}$, we have that
	\begin{displaymath}
		\id{\set{y}}\begin{pmatrix}
			x\\x'
		\end{pmatrix}=\emptyset=\begin{pmatrix}
			x\\x'
		\end{pmatrix}\id{\set{y}}.
	\end{displaymath}
	Therefore we have
	\begin{displaymath}
		\alpha\sim\id{Y}\sim\id{\set{y}}\sim\begin{pmatrix}
			x\\x'
		\end{pmatrix}\sim\beta
	\end{displaymath}
	and, consequently, $\dist{\commgraph{\tr{X}}}{\alpha}{\beta}\leqslant 4$.
	
	\smallskip
	
	\textit{Case 2:} Assume that $\alpha$ is a cycle (of length $\abs{X}$). Since $\abs{X}$ is composite, then there exists $m\in\set{2,\ldots,\abs{X}-1}$ such that $m$ divides $\abs{X}$. Hence $\alpha^m$ is given by the product of $m$ cycles, all of which have length $\frac{\abs{X}}{m}$. We know that $\alpha-\alpha^m$ is a path in $\commgraph{\ptr{X}}$, which implies that $\dist{\commgraph{\ptr{X}}}{\alpha}{\alpha^m}=1$. Moreover, it follows from case 1, and the fact that $\alpha^m\in\sym{X}$ has more than one cycle, that $\dist{\commgraph{\ptr{X}}}{\alpha^m}{\beta}\leqslant 4$. Thus $\dist{\commgraph{\ptr{X}}}{\alpha}{\beta}\leqslant 5$.
	
	\medskip
	
	\textbf{Part 2.} Suppose that $\abs{X}=4$. We are going to show that $\dist{\commgraph{\ptr{X}}}{\alpha}{\beta}\leqslant 4$. In case 1 of part 1 we proved that, if $\alpha$ has at least two cycles, then $\dist{\commgraph{\ptr{X}}}{\alpha}{\beta}\leqslant 4$. Therefore we only need to check that, if $\alpha$ is a cycle (of length $\abs{X}=4$), then $\dist{\commgraph{\ptr{X}}}{\alpha}{\beta}\leqslant 4$.
	
	Assume that $\alpha$ is a cycle (of length $\abs{X}=4$) and that $\alpha=\cycle{x_1 x_2 x_3 x_4}$. Then $\alpha^2=\cycle{x_1 x_3}\cycle{x_2 x_4}$. We note that $\alpha^2$ commutes with $\alpha$. We have two possible cases.
	
	\smallskip
	
	\textit{Case 1:} Assume that either $x,x'\in \set{x_1,x_3}$ or $x,x'\in\set{x_2,x_4}$. We can assume, without loss of generality, that $x,x'\in\set{x_1,x_3}$. Since $\cycle{x_2 x_4}$ is a cycle in $\alpha^2$, then we have that $\alpha^2\id{\set{x_2,x_4}}=\id{\set{x_2,x_4}}\alpha^2$ (this can be proved in a similar way to case 1 of part 1). Moreover, we have that $x,x'\in\set{x_1,x_3}=X\setminus\set{x_2,x_4}$, which implies that
	\begin{displaymath}
		\id{\set{x_2,x_4}}\begin{pmatrix}
			x\\x'
		\end{pmatrix}=\emptyset=\begin{pmatrix}
			x\\x'
		\end{pmatrix}\id{\set{x_2,x_4}}.
	\end{displaymath}
	Therefore we have
	\begin{displaymath}
		\alpha\sim\alpha^2\sim\id{\set{x_2,x_4}}\sim\begin{pmatrix}
			x\\x'
		\end{pmatrix}\sim\beta
	\end{displaymath}
	and, consequently, $\dist{\commgraph{\ptr{X}}}{\alpha}{\beta}\leqslant 4$.
	
	\smallskip
	
	\textit{Case 2:} Assume that either $x\in\set{x_1,x_3}$ and $x'\in\set{x_2,x_4}$, or $x\in\set{x_2,x_4}$ and $x'\in\set{x_1,x_3}$. We assume, without loss of generality, that $x\in\set{x_1,x_3}$ and $x'\in\set{x_2,x_4}$. We have that
	\begin{multline*}
		\alpha^2\begin{pmatrix}
			x_1&x_3\\
			x_4&x_2
		\end{pmatrix}
		=\begin{pmatrix}
			x_1&x_2&x_3&x_4\\
			x_3&x_4&x_1&x_2
		\end{pmatrix}\begin{pmatrix}
			x_1&x_3\\
			x_4&x_2
		\end{pmatrix}
		=\begin{pmatrix}
			x_1&x_3\\
			x_2&x_4
		\end{pmatrix}\\
		=\begin{pmatrix}
			x_1&x_3\\
			x_4&x_2
		\end{pmatrix}\begin{pmatrix}
			x_1&x_2&x_3&x_4\\
			x_3&x_4&x_1&x_2
		\end{pmatrix}
		=\begin{pmatrix}
			x_1&x_3\\
			x_4&x_2
		\end{pmatrix}\alpha^2.
	\end{multline*}
	and, since $x\in\set{x_1,x_3}=X\setminus\set{x_2,x_4}$ and $x'\in\set{x_2,x_4}=X\setminus\set{x_1,x_3}$, we have that
	\begin{displaymath}
		\begin{pmatrix}
			x_1&x_3\\
			x_4&x_2
		\end{pmatrix}\begin{pmatrix}
			x\\x'
		\end{pmatrix}=\emptyset=\begin{pmatrix}
			x\\x'
		\end{pmatrix}\begin{pmatrix}
			x_1&x_3\\
			x_4&x_2
		\end{pmatrix}.
	\end{displaymath}
	Therefore we have
	\begin{displaymath}
		\alpha\sim\alpha^2\sim\begin{pmatrix}
			x_1&x_3\\
			x_4&x_2
		\end{pmatrix}\sim\begin{pmatrix}
			x\\x'
		\end{pmatrix}\sim\beta,
	\end{displaymath}
	which allows us to conclude that $\dist{\commgraph{\ptr{X}}}{\alpha}{\beta}\leqslant 4$.
\end{proof}

With the previous lemma we complete the first part of determining the diameter of $\commgraph{\ptr{X}}$: obtaining an upper bound for the distance between vertices of $\commgraph{\ptr{X}}$. Now we start the second part: find vertices of $\commgraph{\ptr{X}}$ such that the distance between them is $4$, if $\abs{X}=4$, and $5$, if $\abs{X}\geqslant 6$ is composite. This will be done in Lemmata~\ref{lower bound diam, |X|=4}, \ref{P(X): lower bound diam, |X|=6 composite}, \ref{P(X): lower bound diam, |X|=8 composite}~and~\ref{P(X): lower bound diam, |X|>=9 composite}. Before we proceed with those lemmata, we need a few results, which will be helpful in proving that the distance between certain vertices of $\commgraph{\ptr{X}}$ is not smaller than what we require.

We define a new graph, which is constructed from two full transformations. We will see in Lemma~\ref{P(X): unified graph lemma} that this graph will be useful in proving that, for certain $\alpha,\beta\in\tr{X}\setminus\set{\id{X}}$, there is no vertex $\gamma\in\ptr{X}\setminus\parens{\tr{X}\cup\set{\emptyset}}$ such that $\alpha-\gamma-\beta$ is a path in $\commgraph{\ptr{X}}$.

\begin{definition}
	%Let $\alpha,\beta\in\ptr{X}$. We define $\graphptr{\alpha}{\beta}$ as the directed graph whose vertex set is $X$ and where we have an arc from $x$ to $y$ if and only if $x\in\dom\alpha$ and $x\alpha=y$, or $x\in\dom\beta$ and $x\beta=y$.
	Let $\alpha,\beta\in\tr{X}$. We define $\graphptr{\alpha}{\beta}$ as the simple graph whose vertex set is $X$ and where two distinct vertices $x,y\in X$ are adjacent if and only if $x\alpha=y$ or $y\alpha=x$ or $x\beta=y$ or $y\beta=x$.
\end{definition}

Given $\alpha,\beta\in\tr{X}$, an easy way of constructing the graph $\graphptr{\alpha}{\beta}$ is to draw all the elements of $X$ (the vertex set of $\graphptr{\alpha}{\beta}$) and then take one of the full transformations, say $\alpha$. Then for each $x\in X$ we follow the procedure described below:
\begin{enumerate}
	\item If $x\neq x\alpha$ and there is no edge between the vertices $x$ and $x\alpha$, then we draw that edge.
	
	\item If either $x=x\alpha$, or $x\neq x\alpha$ and there is already an edge between the vertices $x$ and $x\alpha$, then we take another element of $X$ that we have not considered so far.
\end{enumerate}
Once we have considered all the elements of $X$, we repeat this preceeding with the full transformation $\beta$.

In the rest of the chapter there will be several example of these graphs. See, for instance, Figures~\ref{P(X), Figure: unified graph alpha2 alpha4, e}, \ref{P(X), Figure: unified graph alpha3, e} and \ref{P(X), Figure: unified graph alpha5, e}.

\begin{lemma}\label{P(X): unified graph lemma}
	%Let $\alpha,\beta\in\ptr{X}$ and let $\gamma\in\ptr{X}\setminus\tr{X}$ be such that $\alpha\gamma=\gamma\alpha$ and $\beta\gamma=\gamma\beta$. If $\graphptr{\alpha}{\beta}$ is strongly connected, then $\gamma=\emptyset$.
	Let $\alpha,\beta\in\tr{X}$ and let $\gamma\in\ptr{X}\setminus\tr{X}$ be such that $\alpha\gamma=\gamma\alpha$ and $\beta\gamma=\gamma\beta$. If $\graphptr{\alpha}{\beta}$ is connected, then $\gamma=\emptyset$.
\end{lemma}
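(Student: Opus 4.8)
The plan is to show that if $\gamma \in \ptr{X} \setminus \tr{X}$ commutes with both $\alpha$ and $\beta$, then the domain of $\gamma$ is a union of connected components of $\graphptr{\alpha}{\beta}$; since $\graphptr{\alpha}{\beta}$ is connected and $\gamma$ is not a full transformation, $\dom\gamma$ must then be empty, i.e. $\gamma = \emptyset$. The heart of the matter is a local propagation statement: if $x \in \dom\gamma$ and $x$ is adjacent to $y$ in $\graphptr{\alpha}{\beta}$, then $y \in \dom\gamma$ as well. By the definition of adjacency in $\graphptr{\alpha}{\beta}$, the edge $\{x,y\}$ arises because one of $x\alpha = y$, $y\alpha = x$, $x\beta = y$, $y\beta = x$ holds, so it suffices to handle these four cases (and by the symmetric roles of $\alpha$ and $\beta$, really just two).

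First I would record the basic fact about composition of partial transformations that is already used repeatedly in the preceding lemmas: for $\delta,\varepsilon \in \ptr{X}$ one has $\dom(\delta\varepsilon) = (\im\delta \cap \dom\varepsilon)\delta^{-1}$, and in particular $z \in \dom(\delta\varepsilon)$ iff $z \in \dom\delta$ and $z\delta \in \dom\varepsilon$. Now suppose $x \in \dom\gamma$ and $x\alpha = y$. Since $\alpha \in \tr{X}$, $x \in \dom(\gamma\alpha) = \dom(\alpha\gamma)$, so $x\alpha = y \in \dom\gamma$; this gives the propagation when the edge comes from $\alpha$ in the "forward" direction. For the "backward" direction, suppose $x \in \dom\gamma$ and $y\alpha = x$. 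Then I want to conclude $y \in \dom\gamma$. Here one argues: $x = y\alpha \in \im\alpha \cap \dom\gamma$, and $\dom(\alpha\gamma) = (\im\alpha \cap \dom\gamma)\alpha^{-1} \ni y$, so $y \in \dom(\alpha\gamma) = \dom(\gamma\alpha)$, which forces $y \in \dom\gamma$. The case where the edge comes from $\beta$ is identical with $\beta$ in place of $\alpha$. Thus in all four cases, $x \in \dom\gamma$ and $x \sim y$ in $\graphptr{\alpha}{\beta}$ imply $y \in \dom\gamma$.

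Having established this, connectedness of $\graphptr{\alpha}{\beta}$ does the rest: if $\dom\gamma \neq \emptyset$, pick $x \in \dom\gamma$; for any $y \in X$ there is a path $x = x_0, x_1, \ldots, x_k = y$ in $\graphptr{\alpha}{\beta}$, and propagating along the path gives $y \in \dom\gamma$. Hence $\dom\gamma = X$, i.e. $\gamma \in \tr{X}$, contradicting the hypothesis $\gamma \in \ptr{X}\setminus\tr{X}$. Therefore $\dom\gamma = \emptyset$, which means $\gamma = \emptyset$.

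The main obstacle is getting the "backward" direction of the propagation right: one must be careful that $y$ — a preimage of a point of $\dom\gamma$ under $\alpha$ — actually lands in $\dom\gamma$, and this genuinely uses the identity for $\dom(\alpha\gamma)$ together with the equality $\dom(\alpha\gamma) = \dom(\gamma\alpha)$ coming from commutativity, rather than just a pointwise chase. Everything else is bookkeeping: the forward direction is immediate, the two $\beta$-cases mirror the two $\alpha$-cases, and the passage from local propagation to the global conclusion is a routine induction on path length using connectedness.
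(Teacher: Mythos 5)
Your proposal is correct and follows essentially the same route as the paper: the key step in both is that commutativity forces $\dom(\alpha\gamma)=\dom(\gamma\alpha)$ (and likewise for $\beta$), which propagates membership in $\dom\gamma$ along edges of $\graphptr{\alpha}{\beta}$, and connectedness then forces $\dom\gamma$ to be all of $X$ or empty. The paper phrases the local step as a biconditional and propagates non-membership from a point outside $\dom\gamma$, whereas you propagate membership and conclude by contradiction with $\gamma\notin\tr{X}$ — a purely cosmetic difference.
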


\begin{proof}
	
	\textbf{Part 1.} As a preliminary step in proving Lemma~\ref{P(X): unified graph lemma}, we show that, if two distinct vertices $x$ and $y$ of $\graphptr{\alpha}{\beta}$ are adjacent, then we have $x\in\dom\gamma$ if and only if $y\in\dom\gamma$.
	
	Let $x$ and $y$ be (distinct) adjacent vertices of $\graphptr{\alpha}{\beta}$. Then we have $x\alpha=y$ or $y\alpha=x$ or $x\beta=y$ or $y\beta=x$. Assume that $x\alpha=y$. (If $y\alpha=x$ or $x\beta=y$ or $y\beta=x$, then the result can be proved in an analogous way.) We have that
	\begin{align*}
		x\in\dom\gamma & \iff x\in\dom\gamma \text{ and } x\gamma\in\dom\alpha &\bracks{\text{because } x\gamma\in X=\dom\alpha}\\
		&\iff x\in\dom\gamma\alpha\\
		&\iff x\in\dom\alpha\gamma &\bracks{\text{because } \alpha\gamma=\gamma\alpha}\\
		&\iff x\in\dom\alpha \text{ and } x\alpha\in\dom\gamma\\
		&\iff x\alpha\in\dom\gamma &\bracks{\text{because } x\in X=\dom\alpha}\\
		&\iff y\in\dom\gamma. &\bracks{\text{because } y=x\alpha}
	\end{align*}
	
	\medskip
	
	\textbf{Part 2.} Now we are ready to prove Lemma~\ref{P(X): unified graph lemma}. Suppose that $\graphptr{\alpha}{\beta}$ is connected. Due to the fact that $\gamma\in\ptr{X}\setminus\tr{X}$, we know that $\dom\gamma\subsetneq X$. Hence there exists $x\in X\setminus\dom\gamma$. Let $y\in X\setminus\set{x}$. It follows from the fact that $\graphptr{\alpha}{\beta}$ is connected that there is a path from $x$ to $y$. Let
	\begin{displaymath}
		x=x_1 - x_2 - \cdots - x_n=y
	\end{displaymath}
	be one of those paths. As a consequence of the fact that $x_1=x\in X\setminus\dom\gamma$, and by an iterated use of part 1, we can successively establish that, for each $i\in\set{2,\ldots,n}$, we have $x_i\in X\setminus\dom\gamma$ and, thus, conclude that $y=x_n\in X\setminus\dom\gamma$.
	
	Since $y$ is an arbitrary element of $X\setminus\set{x}$, we can conclude that $X\setminus\set{x}\subseteq X\setminus\dom\gamma$. Additionally, we know that $x\in X\setminus\dom\gamma$. Therefore we have $\dom\gamma=\emptyset$; that is, $\gamma=\emptyset$.
\end{proof}

The next two lemmata were important in the determination of a lower bound for the diameter of $\commgraph{\tr{X}}$ \cite[Theorem 2.22]{Commuting_graph_T_X}. We will also need them to determine a lower bound for the diameter of $\commgraph{\ptr{X}}$. Before we present those lemmata, we introduce two notations that were used in \cite[Subsection 2.1]{Commuting_graph_T_X} for denoting certain types of full transformations.

First, we introduce a notation for idempotents of $\tr{X}$: if $\set{A_i}_{i=1}^n$ is a partition of $X$, where $n\in\mathbb{N}$, and $x_i\in A_i$ for all $i\in\Xn$, then we denote by
\begin{displaymath}
%\label{T(X): idempotent notation}
	e=\chain{A_1, x_1}\chain{A_2, x_2}\cdots\chain{A_n, x_n}
\end{displaymath}
the idempotent such that $A_ie=\set{x_i}$ for all $i\in\Xn$.

We note that all idempotents $e$ of $\tr{X}$ can be written using that notation. In fact, if $\im e=\set{x_1,\ldots,x_n}$ and $A_i=\set{x_i}e^{-1}$ for all $i\in\Xn$, then $\set{A_i}_{i=1}^n$ is a partition of $X$ and, since $e$ is an idempotent, we have $x_i\in \set{x_i}e^{-1}=A_i$ for all $i\in\Xn$. Then we can write $e$ using the notation introduced above.

Now we introduce a notation for particular full transformations. Let $m,k\in\mathbb{N}$ and assume that $X=\set{x_1,\ldots,x_m,y_1,\ldots,y_k}$ (where $\abs{X}=m+k$). We denote by
\begin{displaymath}
	\alpha=\chain{y_k y_{k-1} {\ldots} y_1 x_1}\cycle{x_1 x_2 {\ldots} x_m}
\end{displaymath}
the full transformation defined as follows: for all $i\in\X{m}$ and $j\in\X{k}$ we have
\begin{displaymath}
	x_i\alpha=\begin{cases}
		x_{i+1} &\text{if } i<m,\\
		x_1 &\text{if } i=m;
	\end{cases} \quad \text{and} \quad
	y_j\alpha=\begin{cases}
		x_1 &\text{if } j=1,\\
		y_{j-1} &\text{if } j>1.
	\end{cases}
\end{displaymath}
So this means that the segment $\cycle{x_1 x_2 {\ldots} x_m}$ can be read the same way as we would read a cycle of a permutation, and the segment $\chain{y_k y_{k-1} {\ldots} y_1 x_1}$ can almost be read as a cycle of a permutation: the only exception is $x_1$, whose image is not defined in $\chain{y_k y_{k-1} {\ldots} y_1 x_1}$.

\begin{lemma}[{\cite[Lemma 2.12]{Commuting_graph_T_X}}]\label{P(X): idempotent that commutes with chain+cycle}
	Suppose that $X=\set{x_1,\ldots,x_m,y_1,\ldots,y_k}$, where $m,k\in\mathbb{N}$. Let $\alpha=\chain{y_k y_{k-1} {\ldots} y_1 x_1}\cycle{x_1 x_2 {\ldots} x_m}$ and let $e\in\tr{X}$ be an idempotent such that $\alpha e=e\alpha$. If $e\neq\id{X}$, then
	\begin{enumerate}
		\item For all $i\in\X{m}$ we have $x_ie=x_i$.
		\item For all $i\in\X{k}$ we have $y_ie=x_{i^*}$, where $i^*\in\X{m}$ is such that $i^* \equiv m-i+1 \pmod{m}$.
	\end{enumerate}
\end{lemma}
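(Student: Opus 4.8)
The plan is to use the commutation relation in its pointwise form $(ze)\alpha=(z\alpha)e$ for every $z\in X$, which propagates the value of $e$ along the functional digraph of $\alpha$; throughout, subscripts on the $x$'s are read modulo $m$ in $\{1,\dots,m\}$. First I would deal with the cycle $\{x_1,\dots,x_m\}$ and prove (1). From $x_i\alpha=x_{i+1}$ (for $i<m$) an easy induction gives $x_ie=(x_1e)\alpha^{i-1}$ for $1\le i\le m$, and then $x_m\alpha=x_1$ yields $x_1e=(x_1e)\alpha^{m}$; so $x_1e$ is an $\alpha$-periodic point. Since every $y_j$ is absorbed into the cycle after finitely many iterations of $\alpha$ and never returns, the only $\alpha$-periodic points are $x_1,\dots,x_m$, so $x_1e\in\{x_1,\dots,x_m\}$. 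Then $\{x_1e,\dots,x_me\}=\{(x_1e)\alpha^{i-1}:1\le i\le m\}$ is the whole $\alpha$-orbit $\{x_1,\dots,x_m\}$, so $e$ restricts to a surjective selfmap of this finite set, and such a map, being idempotent, is the identity; this proves $x_ie=x_i$ for all $i$ (and uses nothing about $e\ne\id{X}$).

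Next I would locate the images of $y_1,\dots,y_k$, and this is where the hypothesis $e\ne\id{X}$ enters. By (1) we have $\{x_1,\dots,x_m\}\subseteq\im e$, and $e\ne\id{X}$ forces $\im e\subsetneq X$. The engine is the implication: if $y_j\in\im e$ and $j<k$, then $y_{j+1}\in\im e$ --- from $y_{j+1}\alpha=y_j$ we get $(y_{j+1}e)\alpha=y_je=y_j$, whose only $\alpha$-preimage is $y_{j+1}$, so $y_{j+1}e=y_{j+1}$. Hence the set $\{j:y_j\in\im e\}$ is upward closed in $\{1,\dots,k\}$; if it were nonempty it would contain $k$, so $y_ke=y_k$, and then running $y_{i-1}e=(y_ie)\alpha$ downward from $i=k$ would give $y_ie=y_i$ for all $i$, forcing $e=\id{X}$ --- a contradiction. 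Therefore $\im e=\{x_1,\dots,x_m\}$, and in particular $y_je\in\{x_1,\dots,x_m\}$ for every $j$.

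Finally, putting $d_i=y_ie$, the commutation identity gives $d_1\alpha=(y_1\alpha)e=x_1e=x_1$ and $d_{i-1}=d_i\alpha$ for $2\le i\le k$. Since $\alpha$ acts as a single $m$-cycle on $\{x_1,\dots,x_m\}$, from $d_1\in\{x_1,\dots,x_m\}$ and $d_1\alpha=x_1$ we get $d_1=x_m$, and then a short induction gives $d_i=x_{i^*}$ with $i^*\equiv m-i+1\pmod m$ and $i^*\in\{1,\dots,m\}$, which is exactly (2).

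The step I expect to be the main obstacle is establishing $\im e=\{x_1,\dots,x_m\}$: it is the only place the hypothesis $e\ne\id{X}$ is needed, and it relies on the two-directional argument --- propagate ``upward'' along the tail $y_1,\dots,y_k$ until one reaches $y_k\in\im e$, then ``downward'' to conclude $e$ fixes the entire tail. The cycle computation and the concluding induction are routine manipulations of $(ze)\alpha=(z\alpha)e$; the degenerate cases $m=1$ and $k=1$ should be checked but cause no trouble.
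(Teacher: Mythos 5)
Your proof is correct. Note, however, that the paper does not prove this lemma at all: it is quoted verbatim from Ara\'ujo--Kinyon--Konieczny (\cite[Lemma 2.12]{Commuting_graph_T_X}) and used as a black box, so there is no in-paper argument to compare yours against. As a self-contained verification your argument is sound: the identity $(ze)\alpha=(z\alpha)e$ does force $x_1e$ to be an $\alpha$-periodic point, surjectivity of an idempotent on the finite cycle does give part (1), and your ``upward closure'' argument on $\gset{j}{y_j\in\im e}$ correctly shows that $e\neq\id{X}$ forces $\im e=\set{x_1,\ldots,x_m}$, after which the recursion $d_{i-1}=d_i\alpha$ pins down $y_ie=x_{i^*}$. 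The only step you leave implicit is the standard fact that for an idempotent $e$ the image coincides with the fixed-point set (used when you pass from $y_j\in\im e$ to $y_je=y_j$, and from $\im e=X$ to $e=\id{X}$); it is worth one sentence but is not a gap.
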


\begin{lemma}[{\cite[Lemma 2.20]{Commuting_graph_T_X}}]\label{P(X): upper bound, chain + cycle must commute with id-->S(X)}
	Suppose that $X=\set{x_1,\ldots,x_m,y_1,\ldots,y_k}$, where $m,k\in\mathbb{N}$. Let $\alpha=\chain{y_1 {\ldots} y_k x_1}\cycle{x_1 {\ldots} x_m}$ and $\beta\in\sym{X}$. If $\alpha\beta=\beta\alpha$, then $\beta=\id{X}$.
\end{lemma}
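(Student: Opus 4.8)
The plan is to use that $\beta$ is a \emph{bijection} commuting with $\alpha$ and therefore an automorphism of the functional digraph of $\alpha$, and then to note that the shape of $\alpha$ --- a single chain $y_1\to y_2\to\cdots\to y_k\to x_1$ feeding into the cycle $x_1\to x_2\to\cdots\to x_m\to x_1$ --- admits no nontrivial such automorphism. Throughout I use the paper's right-action convention, so $\alpha\beta=\beta\alpha$ means $(z\alpha)\beta=(z\beta)\alpha$ for every $z\in X$; the key mechanism will be that, once we know $\beta$ fixes a vertex, we can push that information forward along the edges $z\mapsto z\alpha$.

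First I would record the one structural fact that drives the argument: from $\alpha=\chain{y_1 {\ldots} y_k x_1}\cycle{x_1 {\ldots} x_m}$ one reads off $\im\alpha=\set{x_1,\ldots,x_m}\cup\set{y_2,\ldots,y_k}$, so $X\setminus\im\alpha=\set{y_1}$ is a singleton (this uses $k\geqslant 1$, which is part of the hypothesis $k\in\mathbb{N}$). Next I would deduce that $\beta$ fixes $y_1$: since $\beta$ is surjective, $\im(\beta\alpha)=(\im\beta)\alpha=X\alpha=\im\alpha$, whereas $\im(\alpha\beta)=(\im\alpha)\beta$, so $\alpha\beta=\beta\alpha$ forces $(\im\alpha)\beta=\im\alpha$; as $\beta$ permutes $X$, it also maps the complement onto itself, i.e. $(X\setminus\im\alpha)\beta=X\setminus\im\alpha$, and since this set equals $\set{y_1}$ we get $y_1\beta=y_1$.

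The final step is the propagation. Applying $(z\alpha)\beta=(z\beta)\alpha$ with $z=y_1$ gives $y_2\beta=(y_1\alpha)\beta=(y_1\beta)\alpha=y_1\alpha=y_2$; iterating down the chain $y_1,y_2,\ldots,y_k$ yields $y_j\beta=y_j$ for all $j\in\X{k}$, and then $x_1\beta=(y_k\alpha)\beta=(y_k\beta)\alpha=y_k\alpha=x_1$. Running the same step around the cycle, $x_{i+1}\beta=(x_i\alpha)\beta=(x_i\beta)\alpha=x_i\alpha=x_{i+1}$ for $i<m$, so $x_i\beta=x_i$ for every $i\in\X{m}$. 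Since $X=\set{x_1,\ldots,x_m,y_1,\ldots,y_k}$, this means $\beta=\id{X}$.

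I do not expect a genuine obstacle: the argument is essentially the observation ``$\beta$ preserves $\im\alpha$, hence its one-element complement, hence, by commutativity, everything''. The two points that require care are the composition convention --- so that fixing $y_1$ propagates \emph{forward} along $\alpha$ rather than backward --- and the (implicit) use of $k\geqslant 1$ to guarantee both that $y_1$ exists and that it is the \emph{unique} element outside $\im\alpha$; if $k$ were allowed to be $0$ then $\alpha$ would be a permutation and the statement would fail, for instance with $\beta=\alpha$.
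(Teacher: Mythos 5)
Your proof is correct: the observation that $X\setminus\im\alpha=\set{y_1}$ is a singleton, that a permutation commuting with $\alpha$ must stabilise $\im\alpha$ and hence fix $y_1$, and that the relation $(z\alpha)\beta=(z\beta)\alpha$ then propagates the fixed point forward along the chain and around the cycle, covering all of $X$, is a complete and sound argument (and your remark that $k\geqslant 1$ is what makes the complement a singleton is exactly the right caveat). Note that the paper itself does not prove this lemma --- it is imported as \cite[Lemma 2.20]{Commuting_graph_T_X} --- so there is no in-paper proof to compare against; your argument stands as a clean, self-contained justification of the cited result.
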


The previous lemma shows that the identity is the only permutation that commutes with the full transformation $\chain{y_1 {\ldots} y_k x_1}\cycle{x_1 {\ldots} x_m}$. The next lemma proves something similar: it establishes that the empty map is the only partial transformation that is not a full transformation and that commutes with $\chain{y_1 {\ldots} y_k x_1}\cycle{x_1 {\ldots} x_m}$.

\begin{lemma}\label{P(X): upper bound, chain + cycle must commute with empty map-->P(X)-T(X)}
	Suppose that $X=\set{x_1,\ldots,x_m,y_1,\ldots,y_k}$, where $m,k\in\mathbb{N}$. Let $\alpha=\chain{y_1 {\ldots} y_k x_1}\cycle{x_1 {\ldots} x_m}$ and $\beta\in\ptr{X}\setminus\tr{X}$. If $\alpha\beta=\beta\alpha$, then $\beta=\emptyset$.
\end{lemma}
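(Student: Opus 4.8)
The plan is to run the entire argument at the level of domains. Since $\alpha$ is a \emph{full} transformation we have $\dom\alpha=X$, and hence, by the usual formula for the domain of a composite of partial transformations, for every $\beta\in\ptr{X}$ one has
\[
\dom\beta\alpha=\dom\beta
\qquad\text{and}\qquad
\dom\alpha\beta=\parens{\dom\beta}\alpha^{-1},
\]
because the conditions ``$z\beta\in\dom\alpha$'' and ``$z\in\dom\alpha$'' hold automatically. Therefore the hypothesis $\alpha\beta=\beta\alpha$ forces $\parens{\dom\beta}\alpha^{-1}=\dom\beta$. Writing $D=\dom\beta$, this equality says precisely that, for every $z\in X$, we have $z\in D$ if and only if $z\alpha\in D$; equivalently, both $D$ and $X\setminus D$ are closed under $\alpha$.

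I would then exploit the rigid structure of the functional digraph of $\alpha$: it is the cycle $x_1\to x_2\to\cdots\to x_m\to x_1$ together with the directed path $y_1\to y_2\to\cdots\to y_k\to x_1$ feeding into it. In particular, iterating $\alpha$ from any element of $X$ eventually lands on $x_1$. Suppose $D\neq\emptyset$ and choose $z\in D$. Since $D$ is closed under $\alpha$, every iterate $z\alpha^t$ lies in $D$, and some such iterate equals $x_1$, so $x_1\in D$; running around the cycle then yields $x_1,\dots,x_m\in D$. For the tail, suppose some $y_j\notin D$: since $X\setminus D$ is closed under $\alpha$, every iterate $y_j\alpha^t$ lies in $X\setminus D$, but one of these iterates is $x_1$, contradicting $x_1\in D$. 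Hence $y_1,\dots,y_k\in D$ as well, so $D=X$.

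Combining the two possibilities, $\dom\beta\in\set{\emptyset,X}$. Since $\beta\in\ptr{X}\setminus\tr{X}$, its domain is a proper subset of $X$, so $\dom\beta=\emptyset$; that is, $\beta=\emptyset$, which is the assertion of the lemma.

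I do not expect a real obstacle here. The only point that needs care is that one must use the \emph{full} strength of the equality $\parens{\dom\beta}\alpha^{-1}=\dom\beta$ --- equivalently, the closure of the complement $X\setminus\dom\beta$ under $\alpha$, not merely the forward-invariance of $\dom\beta$ --- in order to propagate membership back up the tail $y_1,\dots,y_k$, against the direction in which $\alpha$ moves those points.
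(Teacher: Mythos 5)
Your proof is correct, and at bottom it runs on the same mechanism as the paper's, but packaged differently. The identity you extract from $\alpha\beta=\beta\alpha$ --- that $\parens{\dom\beta}\alpha^{-1}=\dom\beta$, i.e.\ $z\in\dom\beta$ if and only if $z\alpha\in\dom\beta$ --- is exactly Part~1 of the paper's Lemma~\ref{P(X): unified graph lemma} specialized to a single full transformation. Where the paper then simply notes that the graph $\graphptr{\alpha}{\alpha}$ is connected (verified by a figure) and quotes that general unified-graph lemma, you argue the propagation directly from the tail-into-cycle structure of $\alpha$: every point iterates to $x_1$, and both $\dom\beta$ and its complement are forward-invariant, which forces $\dom\beta\in\set{\emptyset,X}$ and hence $\beta=\emptyset$ since $\beta\notin\tr{X}$. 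Your route is self-contained and dispenses with the auxiliary graph apparatus; the paper's route buys reusability, since the unified-graph lemma is invoked again for Lemma~\ref{P(X): what commutes with cycles length |X|} and throughout the lower-bound Lemmata~\ref{P(X): lower bound diam, |X|=6 composite}--\ref{P(X): lower bound diam, |X|>=9 composite}, where the two commuting transformations are genuinely distinct and the directed-iteration argument would not apply as cleanly. Your closing caveat is well placed: forward-invariance of $\dom\beta$ alone would not push membership up the tail $y_1,\ldots,y_k$ against the direction of $\alpha$; it is the invariance of the complement (the backward implication) that does this, and you use it correctly.
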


\begin{proof}
	Suppose that $\alpha\beta=\beta\alpha$. In Figure~\ref{P(X), Figure: unified graph chain, chain} we have the graph $\graphptr{\alpha}{\alpha}$. It is clear, by observing Figure~\ref{P(X), Figure: unified graph chain, chain}, that $\graphptr{\alpha}{\alpha}$ is connected. Hence, by lemma~\ref{P(X): unified graph lemma}, we have that $\beta=\emptyset$.
	\begin{figure}[hbt]
		\begin{center}
			\begin{tikzpicture}
				\foreach \x in {0,-1,-2,-4} {
					\node[vertex] (x\x) at (180+\x*360/5:1cm) {};
				}
				
				\node[vertex] (y1) at (-6,0) {};
				\node[vertex] (y2) at (-5,0) {};
				\node[vertex] (y3) at (-4,0) {};
				\node[vertex] (yk) at (-2,0) {};
				
				\draw (168:1.32cm) node {$x_1$};
				\draw (180-1*360/5:1.4cm) node {$x_2$};
				\draw (180-2*360/5:1.4cm) node {$x_3$};
				\draw (180-4*360/5:1.4cm) node {$x_n$};
				
				\node[anchor=north,inner sep=2mm] at (y1) {$y_1$};
				\node[anchor=north,inner sep=2mm] at (y2) {$y_2$};
				\node[anchor=north,inner sep=2mm] at (y3) {$y_3$};
				\node[anchor=north,inner sep=2mm] at (yk) {$y_k$};
				
				\begin{scope}[edge]
					\draw (180:1cm) arc (180:180-1*360/5:1cm);
					\draw (180-1*360/5:1cm) arc (180-1*360/5:180-2*360/5:1cm);
					\draw (180-4*360/5:1cm) arc (180-4*360/5:180-5*360/5:1cm);
					\draw[dotted] (180-2*360/5:1cm) arc (180-2*360/5:180-4*360/5:1cm);
					
					\draw (y1) -- (y2);
					\draw (y2) -- (y3);
					\draw[dotted] (y3) -- (yk);
					\draw (yk) -- (x0);
				\end{scope}
			\end{tikzpicture}
		\end{center}
		\caption{Graph $\graphptr{\alpha}{\alpha}$ where $\alpha=\chain{y_1 {\ldots} y_k x_1}\cycle{x_1 {\ldots} x_m}$.}
		\label{P(X), Figure: unified graph chain, chain}
	\end{figure}
\end{proof}

Now we describe the partial transformations that commute with a cycle of length $\abs{X}$.

\begin{lemma}\label{P(X): what commutes with cycles length |X|}
	Let $\alpha\in\sym{X}$ and let $\beta\in\ptr{X}$ be such that $\alpha\beta=\beta\alpha$. If $\alpha$ is a cycle of length $\abs{X}$, then $\beta=\emptyset$ or $\beta$ is some power of $\alpha$.
\end{lemma}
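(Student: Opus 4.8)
The plan is to work directly with the coordinates $X = \{x_1, \ldots, x_n\}$ on which $\alpha$ acts as the single $n$-cycle $\cycle{x_1 x_2 \ldots x_n}$, so that $x_i \alpha = x_{i+1}$ with indices read modulo $n$. Suppose $\beta \in \ptr{X}$ commutes with $\alpha$ and $\beta \neq \emptyset$. First I would show that $\dom\beta$ is all of $X$: if $x_i \in \dom\beta$ then from $x_i \alpha \beta = x_i \beta \alpha$ we get $x_{i+1} \in \dom\beta$ (the left side is defined, since $x_i\alpha = x_{i+1}$ lies in $X = \dom\alpha$ and we are assuming $x_i \in \dom\beta$; wait — more carefully, $x_i\in\dom\beta$ gives $x_i\in\dom\beta\alpha$, hence $x_i\in\dom\alpha\beta$, hence $x_i\alpha = x_{i+1}\in\dom\beta$). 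Iterating around the cycle, $\dom\beta = X$, so $\beta \in \tr{X}$.

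Next, since $\beta$ is now a full transformation commuting with $\alpha$, I would fix $j$ with $x_1 \beta = x_j$ and prove by induction that $x_i \beta = x_{i + j - 1}$ for all $i$ (indices mod $n$): indeed $x_{i+1}\beta = (x_i\alpha)\beta = (x_i\beta)\alpha = x_{i+j-1}\alpha = x_{i+j}$. Thus $\beta$ is the transformation sending $x_i \mapsto x_{i + (j-1)}$ for all $i$, which is exactly $\alpha^{\,j-1}$. (Here $\alpha^0 = \id{X}$ is included among the "powers of $\alpha$".) This gives the conclusion: $\beta = \emptyset$ or $\beta$ is a power of $\alpha$.

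The only mild subtlety — and the step I would be most careful about — is the domain argument in the first paragraph: one must make sure the equivalences $x_i\in\dom\beta \iff x_i\in\dom\beta\alpha \iff x_i\in\dom\alpha\beta \iff x_{i+1}\in\dom\beta$ are each justified, using $\dom\alpha = X$ (so post-composing with $\alpha$ never shrinks the domain and pre-composing with $\alpha$ is just reindexing by the bijection $\alpha$). This is essentially the same mechanism as Part 1 of Lemma~\ref{P(X): unified graph lemma}, and in fact one could alternatively invoke that lemma: the graph $\graphptr{\alpha}{\alpha}$ is a single $n$-cycle on $X$, hence connected, so any $\gamma \in \ptr{X} \setminus \tr{X}$ commuting with $\alpha$ is $\emptyset$; this immediately reduces the problem to the case $\beta \in \tr{X}$, after which the induction in the second paragraph finishes it. Everything else is a routine finite computation, so no serious obstacle is expected.
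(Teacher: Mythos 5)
Your proposal is correct and follows essentially the same route as the paper: the full-transformation case is handled by the identical computation fixing $x_1\beta$ and propagating around the cycle, and the partial case is settled by the same domain-propagation mechanism, which the paper obtains by citing Lemma~\ref{P(X): unified graph lemma} applied to the connected graph $\graphptr{\alpha}{\alpha}$ (the alternative you yourself point out). Your inline domain argument is just that lemma's Part~1 specialized to a single cycle, so there is no substantive difference.
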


\begin{proof}
	Let $n=\abs{X}$ and assume that $X=\set{x_1,\ldots,x_n}$ and $\alpha=\cycle{x_1 x_2 {\ldots} x_n}$.
	
	\smallskip
	
	\textit{Case 1:} Suppose that $\beta\in\ptr{X}\setminus\tr{X}$. Figure~\ref{P(X), Figure: unified graph cycle, cycle} shows the graph $\graphptr{\alpha}{\alpha}$. We can easily conclude that $\graphptr{\alpha}{\alpha}$ is connected and, thus, Lemma~\ref{P(X): unified graph lemma} implies that $\beta=\emptyset$.
	
	\begin{figure}[hbt]
		\begin{center}
			\begin{tikzpicture}
				\foreach \x in {0,-1,-2,-4} {
					\node[vertex] at (90+\x*360/5:1cm) {};
				}
				
				\draw (90:1.4cm) node {$x_1$};
				\draw (90-1*360/5:1.4cm) node {$x_2$};
				\draw (90-2*360/5:1.4cm) node {$x_3$};
				\draw (90-4*360/5:1.4cm) node {$x_n$};
				
				\begin{scope}[edge]
					\draw (90:1cm) arc (90:90-1*360/5:1cm);
					\draw (90-1*360/5:1cm) arc (90-1*360/5:90-2*360/5:1cm);
					\draw (90-4*360/5:1cm) arc (90-4*360/5:90-5*360/5:1cm);
					\draw[dotted] (90-2*360/5:1cm) arc (90-2*360/5:90-4*360/5:1cm);
				\end{scope}
				
				%\node at (4,0) {$\graphptr{\alpha}{\alpha}$};
			\end{tikzpicture}
		\end{center}
		\caption{Graph $\graphptr{\alpha}{\alpha}$ where $\alpha=\cycle{x_1 x_2 {\ldots} x_n}$.}
		\label{P(X), Figure: unified graph cycle, cycle}
	\end{figure}
	
	\smallskip
	
	\textit{Case 2:} Suppose that $\beta\in\tr{X}$. Let $i\in\Xn$ be such that $x_1\beta=x_i$. For all $j\in\Xn$ have that
	\begin{displaymath}
		x_j\beta=x_1\alpha^{j-1}\beta=x_1\beta\alpha^{j-1}=x_i\alpha^{j-1}=x_1\alpha^{i-1}\alpha^{j-1}=x_1\alpha^{j-1}\alpha^{i-1}=x_j\alpha^{i-1}.
	\end{displaymath}
	Thus $\beta=\alpha^{i-1}$, a power of $\alpha$.
\end{proof}

Now we can determine a lower bound for the diameter of $\commgraph{\ptr{X}}$. Lemmata~\ref{lower bound diam, |X|=4}, \ref{P(X): lower bound diam, |X|=6 composite}, \ref{P(X): lower bound diam, |X|=8 composite} and \ref{P(X): lower bound diam, |X|>=9 composite} provide those lower bounds when $\abs{X}=4,6,8$ and $\abs{X}\geqslant 9$ is composite, respectively.

\begin{lemma}\label{lower bound diam, |X|=4}
	Suppose that $\abs{X}=4$. Then there exist $\alpha,\beta\in\ptr{X}\setminus\set{\emptyset,\id{X}}$ such that $\dist{\commgraph{\ptr{X}}}{\alpha}{\beta}\geqslant 4$.
\end{lemma}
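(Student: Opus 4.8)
I want to exhibit two vertices of $\commgraph{\ptr{X}}$ whose distance is at least $4$, when $\abs{X}=4$. Since the diameter of $\commgraph{\tr{X}}$ is $4$ in this case (Theorem~\ref{P(X): diam T(X)}), a natural guess is to reuse the two full transformations that realise the diameter $4$ in $\commgraph{\tr{X}}$ — Araújo, Kinyon and Konieczny's lower-bound example. So I would take $X=\set{x_1,x_2,x_3,x_4}$ and set $\alpha=\chain{y_1 x_1}\cycle{x_1 x_2 x_3}$ (i.e.\ the ``chain plus cycle'' transformation with $m=3$, $k=1$, writing $y_1=x_4$) and $\beta$ a suitable second transformation from \cite{Commuting_graph_T_X}, say a conjugate of $\alpha$ by a transposition, so that already $\dist{\commgraph{\tr{X}}}{\alpha}{\beta}=4$. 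The point of this lemma is that passing from $\commgraph{\tr{X}}$ to the larger graph $\commgraph{\ptr{X}}$ cannot create a shortcut: any path of length $\leqslant 3$ from $\alpha$ to $\beta$ in $\commgraph{\ptr{X}}$ would have to use at least one vertex in $\ptr{X}\setminus\parens{\tr{X}\cup\set{\emptyset}}$, and I must rule that out.

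**Key steps.** First, record that $\alpha,\beta\in\tr{X}\setminus\set{\id{X}}$ are genuine vertices of $\commgraph{\ptr{X}}$, and that $\dist{\commgraph{\ptr{X}}}{\alpha}{\beta}\geqslant 2$ since $\alpha\beta\neq\beta\alpha$ (they are non-adjacent; this is part of the $\commgraph{\tr{X}}$ computation). Second, suppose for contradiction there is a path of length $3$, say $\alpha-\gamma_1-\gamma_2-\beta$. The two intermediate vertices $\gamma_1,\gamma_2$ lie in $\ptr{X}\setminus\set{\emptyset,\id{X}}$. If both lay in $\tr{X}$ we would contradict $\dist{\commgraph{\tr{X}}}{\alpha}{\beta}=4$; so at least one of them, say $\gamma_1$ (the argument for $\gamma_2$ is symmetric after swapping $\alpha$ and $\beta$), lies in $\ptr{X}\setminus\parens{\tr{X}\cup\set{\emptyset}}$. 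Now $\gamma_1$ commutes with $\alpha$. Here I invoke Lemma~\ref{P(X): upper bound, chain + cycle must commute with empty map-->P(X)-T(X)}: since $\alpha$ is exactly a chain-plus-cycle transformation and $\gamma_1\in\ptr{X}\setminus\tr{X}$ commutes with it, we must have $\gamma_1=\emptyset$ — contradiction, as $\emptyset\notin S\setminus Z(S)$. Hence $\gamma_1\in\tr{X}$; by symmetry $\gamma_2\in\tr{X}$ as well (using that $\beta$ should also be chosen to be a chain-plus-cycle transformation, or at least one for which the analogous statement holds); so the length-$3$ path lives entirely in $\commgraph{\tr{X}}$, contradicting $\dist{\commgraph{\tr{X}}}{\alpha}{\beta}=4$. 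A path of length $\leqslant 2$ is ruled out the same way, even more easily. Therefore $\dist{\commgraph{\ptr{X}}}{\alpha}{\beta}\geqslant 4$.

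**Main obstacle.** The delicate point is the choice of $\beta$. I need both endpoints to be chain-plus-cycle transformations (or otherwise covered by Lemma~\ref{P(X): upper bound, chain + cycle must commute with empty map-->P(X)-T(X)}, equivalently by Lemma~\ref{P(X): unified graph lemma} with a connected $\graphptr{\cdot}{\cdot}$), \emph{and} I need $\dist{\commgraph{\tr{X}}}{\alpha}{\beta}=4$ to actually hold for this specific pair — I cannot simply cite ``the diameter is $4$'' because that only guarantees \emph{some} pair at distance $4$, not that my pair works. So I would either reproduce the explicit extremal pair from \cite[proof of Theorem 2.22]{Commuting_graph_T_X} (checking it consists of chain-plus-cycle maps), or, if that pair is not of the required form, run a short direct argument: if $\alpha-\gamma-\beta$ were a length-$2$ path in $\commgraph{\tr{X}}$ with $\gamma\in\tr{X}\setminus\set{\id{X}}$, then Lemma~\ref{P(X): idempotent that commutes with chain+cycle} (applied after replacing $\gamma$ by an idempotent power, and similarly for the permutation case via Lemma~\ref{P(X): upper bound, chain + cycle must commute with id-->S(X)}) pins down $\gamma$ so tightly relative to $\alpha$ that it cannot also commute with $\beta$. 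That computation — verifying the chosen $\alpha,\beta$ genuinely have no common neighbour in $\tr{X}$ and no common neighbour giving a length-$3$ detour — is the real work; the $\ptr{X}$-specific part is a clean one-line application of Lemma~\ref{P(X): upper bound, chain + cycle must commute with empty map-->P(X)-T(X)}.
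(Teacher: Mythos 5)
Your mechanism for excluding strictly partial shortcuts is exactly the right one (and is the paper's as well): any $\gamma\in\ptr{X}\setminus\tr{X}$ commuting with a chain-plus-cycle endpoint is $\emptyset$ by Lemma~\ref{P(X): upper bound, chain + cycle must commute with empty map-->P(X)-T(X)}, so a path of length $\leqslant 3$ in $\commgraph{\ptr{X}}$ would lie entirely in $\tr{X}$. The genuine gap is that the decisive ingredient --- a \emph{specific} pair with $\dist{\commgraph{\tr{X}}}{\alpha}{\beta}\geqslant 4$ --- is never established. Theorem~\ref{P(X): diam T(X)} only guarantees that some pair realises distance $4$; your candidate pair ($\alpha=\chain{y_1 x_1}\cycle{x_1 x_2 x_3}$ and a conjugate of it by a transposition) is offered with no verification that its $\tr{X}$-distance is at least $4$, and your fallback ``short direct argument'' is only gestured at (``pins down $\gamma$ so tightly \dots''), with the length-$3$ analysis inside $\tr{X}$ explicitly deferred as ``the real work''. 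That verification \emph{is} the substance of the lemma. Note also that the extremal $n=4$ pair from \cite{Commuting_graph_T_X} --- the one the paper itself uses, $\alpha=\cycle{1 2 3 4}$ and $\beta=\chain{1 2 3}\cycle{3 4}$ --- is not a pair of chain-plus-cycle maps (one endpoint is a permutation), so the ``check it consists of chain-plus-cycle maps'' branch of your plan fails as stated, although Lemma~\ref{P(X): what commutes with cycles length |X|} supplies the needed ``only $\emptyset$ commutes'' property for the $4$-cycle.

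By contrast, the paper imports no $\tr{X}$-distance fact at all: with $\alpha=\cycle{1 2 3 4}$, Lemma~\ref{P(X): what commutes with cycles length |X|} shows the only vertices adjacent to $\alpha$ are $\alpha^2,\alpha^3$; Lemmata~\ref{P(X): upper bound, chain + cycle must commute with id-->S(X)} and \ref{P(X): upper bound, chain + cycle must commute with empty map-->P(X)-T(X)} force any neighbour of $\beta$ into $\tr{X}\setminus\sym{X}$, whence Lemma~\ref{P(X): idempotent that commutes with chain+cycle} forces its idempotent power to be $\begin{pmatrix}1&2&3&4\\3&4&3&4\end{pmatrix}$, and a two-line computation shows neither $\alpha^2$ nor $\alpha^3$ commutes with that idempotent --- so no path of length $3$ exists. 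To salvage your route you would have to do the analogous concrete work for your pair, e.g.\ observe that any non-identity full transformation commuting with a chain-plus-cycle map has its (unique) idempotent power pinned down by Lemma~\ref{P(X): idempotent that commutes with chain+cycle}, and check that the two idempotents so forced by your $\alpha$ and $\beta$ do not commute, which rules out both a common neighbour and a commuting pair of intermediate vertices. As written, ``so that already $\dist{\commgraph{\tr{X}}}{\alpha}{\beta}=4$'' is an assumption, not a proof.
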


\begin{proof}
	Assume that $X=\set{1,2,3,4}$. Let
	\begin{displaymath}
		\alpha=\cycle{1 2 3 4} \quad \text{and} \quad \beta=\chain{1 2 3}\cycle{3 4}.
	\end{displaymath}
	and let
	\begin{displaymath}
		\alpha=\gamma_1-\gamma_2-\cdots-\gamma_n=\beta
	\end{displaymath}
	be a path of minimum length from $\alpha$ to $\beta$ (in $\commgraph{\ptr{X}}$). Then $\dist{\commgraph{\ptr{X}}}{\alpha}{\beta}=n-1$.
	
	%It follows from Lemma~\ref{P(X): what commutes with cycles length |X|} that the partial transformations that commute with $\alpha$ are precisely $\emptyset,\id{X},\alpha,\alpha^2,\alpha^3$. Since $\centre{\ptr{X}}=\set{\emptyset,\id{X}}$, then we can conclude that the vertex $\alpha$ is only adjacent to two vertices: $\alpha^2$ and $\alpha^3$. However, $\beta$ is not adjacent to any of those vertices because $2\alpha^2\beta=4\beta=3\neq 1=3\alpha^2=2\beta\alpha^2$ and $1\alpha^3\beta=4\beta=3\neq 1=2\alpha^3=1\beta\alpha^3$. Hence there is no path from $\alpha$ to $\beta$ of length $2$ and, thus, $\dist{\commgraph{\ptr{X}}}{\alpha}{\beta}\geqslant 3$.
	
	We have that $3\alpha\beta=4\beta=3\neq 1=4\alpha=3\beta\alpha$, which implies that the vertices $\alpha$ and $\beta$ are not adjacent. Hence $\dist{\commgraph{\ptr{X}}}{\alpha}{\beta}\geqslant 2$ (and $n\geqslant 3$).
	
	It follows from Lemma~\ref{P(X): what commutes with cycles length |X|} that the partial transformations that commute with $\alpha$ are precisely $\emptyset,\id{X},\alpha,\alpha^2,\alpha^3$. Since $\centre{\ptr{X}}=\set{\emptyset,\id{X}}$, then we can conclude that the vertex $\alpha$ is only adjacent to two vertices: $\alpha^2$ and $\alpha^3$. Hence $\gamma_2\in\set{\alpha^2,\alpha^3}\subseteq\sym{X}$. Moreover, we have that $\gamma_{n-1}\beta=\beta\gamma_{n-1}$ and $\gamma_{n-1}\in\ptr{X}\setminus\set{\id{X}}$. Hence, by Lemma~\ref{P(X): upper bound, chain + cycle must commute with id-->S(X)}, we must have $\gamma_{n-1}\in \ptr{X}\setminus\sym{X}$, which implies that $\gamma_2\neq\gamma_{n-1}$. Thus $n\geqslant 4$ and $\dist{\commgraph{\ptr{X}}}{\alpha}{\beta}\geqslant 3$.
	
	Since $\gamma_{n-1}\beta=\beta\gamma_{n-1}$ and $\gamma_{n-1}\in\ptr{X}\setminus\set{\emptyset}$, then, by Lemma~\ref{P(X): upper bound, chain + cycle must commute with empty map-->P(X)-T(X)}, we must have $\gamma_{n-1}\in\tr{X}$. Hence $\gamma_{n-1}\in \tr{X}\cap\parens{\ptr{X}\setminus\sym{X}}=\tr{X}\setminus\sym{X}$. We know that there exists $m\in\mathbb{N}$ such that $\gamma_{n-1}^m$ is an idempotent. Note that we have $\gamma_{n-1}^m\neq\id{X}$ because $\gamma_{n-1}\in\tr{X}\setminus\sym{X}$. Moreover, we have $\gamma_{n-1}^m\beta=\beta\gamma_{n-1}^m$ because $\gamma_{n-1}\beta=\beta\gamma_{n-1}$. Consequently, by Lemma~\ref{P(X): idempotent that commutes with chain+cycle}, we have that
	\begin{displaymath}
		\gamma_{n-1}^m=\begin{pmatrix}
			1&2&3&4\\
			3&4&3&4
		\end{pmatrix}.
	\end{displaymath}
	Then $3\alpha^2\gamma_{n-1}^m=1\gamma_{n-1}^m=3 \neq 1=3\alpha^2=3\gamma_{n-1}^m\alpha^2$ and $3\alpha^3\gamma_{n-1}^m=2\gamma_{n-1}^m=4 \neq 2=3\alpha^3=3\gamma_{n-1}^m\alpha^3$. Hence $\gamma_2$ (which is either $\alpha^2$ or $\alpha^3$) does not commute with $\gamma_{n-1}^m$ and, consequently, $\gamma_2$ does not commute with $\gamma_{n-1}$. This implies that the vertices $\gamma_2$ and $\gamma_{n-1}$ are not adjacent. Thus we must have $n\geqslant 5$ and $\dist{\commgraph{\ptr{X}}}{\alpha}{\beta}\geqslant 4$, which concludes the proof.
\end{proof}

\begin{lemma}\label{P(X): lower bound diam, |X|=6 composite}
	Suppose that $\abs{X}=6$. Then there exist $\alpha,\beta\in\ptr{X}\setminus\set{\emptyset,\id{X}}$ such that $\dist{\commgraph{\ptr{X}}}{\alpha}{\beta}\geqslant 5$.
\end{lemma}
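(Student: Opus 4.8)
The plan is to choose specific $\alpha,\beta$, following the pattern of Lemma~\ref{lower bound diam, |X|=4}, and to run an argument parallel to the proof of that lemma but carrying one further layer --- which is what separates the value $5$ from the value $4$. Take $X=\set{1,\dots,6}$, let $\alpha=\cycle{1 2 3 4 5 6}$, and let $\beta$ be a transformation of the form $\chain{y_1 {\ldots} y_k x_1}\cycle{x_1 {\ldots} x_m}$ on $X$ (with $m+k=6$), the labels being chosen so that the idempotent appearing below lies in ``general position'' relative to the cyclic structure of the powers of $\alpha$. Let $\alpha=\gamma_1-\gamma_2-\cdots-\gamma_n=\beta$ be a path of minimum length in $\commgraph{\ptr{X}}$, so $\dist{\commgraph{\ptr{X}}}{\alpha}{\beta}=n-1$; it suffices to show $n\geqslant 6$.

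For the first layers one argues as in Lemma~\ref{lower bound diam, |X|=4}. A direct computation gives $\alpha\beta\neq\beta\alpha$, so $n\geqslant 3$. Since $\alpha$ is a cycle of length $\abs{X}$, Lemma~\ref{P(X): what commutes with cycles length |X|} yields $\gamma_2\in\set{\alpha^2,\alpha^3,\alpha^4,\alpha^5}\subseteq\sym{X}\setminus\set{\id{X}}$. The vertex $\gamma_{n-1}$ commutes with $\beta$ and is not $\id{X}$, so Lemma~\ref{P(X): upper bound, chain + cycle must commute with id-->S(X)} gives $\gamma_{n-1}\notin\sym{X}$; hence $\gamma_2\neq\gamma_{n-1}$ and $n\geqslant 4$. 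As $\gamma_{n-1}\neq\emptyset$, Lemma~\ref{P(X): upper bound, chain + cycle must commute with empty map-->P(X)-T(X)} forces $\gamma_{n-1}\in\tr{X}$, so $\gamma_{n-1}\in\tr{X}\setminus\sym{X}$. Choose $m\in\mathbb{N}$ with $\gamma_{n-1}^{m}$ idempotent; then $\gamma_{n-1}^{m}\neq\id{X}$ and $\gamma_{n-1}^{m}$ commutes with $\beta$, so Lemma~\ref{P(X): idempotent that commutes with chain+cycle} pins $\gamma_{n-1}^{m}$ down to a single explicit idempotent $e$. One checks directly that $e$ does not commute with any of $\alpha^{2},\alpha^{3},\alpha^{4},\alpha^{5}$. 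If $\gamma_2$ and $\gamma_{n-1}$ were adjacent, then $\gamma_2$ would commute with $\gamma_{n-1}^{m}=e$, contradicting this; so $\gamma_2$ and $\gamma_{n-1}$ are not adjacent. If $n=4$ then $\gamma_{n-1}=\gamma_3$ is adjacent to $\gamma_2$ along the path, a contradiction; hence $n\geqslant 5$.

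The remaining and decisive step is to rule out $n=5$. Then the path is $\alpha-\gamma_2-\gamma_3-\gamma_4-\beta$ with $\gamma_2\in\set{\alpha^2,\alpha^3,\alpha^4,\alpha^5}$, $\gamma_4=\gamma_{n-1}\in\tr{X}\setminus\sym{X}$ and $\gamma_4^{m}=e$, and one has to contradict the existence of a vertex $\gamma_3$ adjacent to both $\gamma_2$ and $\gamma_4$. I would split into two cases. If $\gamma_3\in\tr{X}$, then $\gamma_3$ commutes with $\gamma_4$, hence with $e=\gamma_4^{m}$, and also with the power $\gamma_2$ of $\alpha$; using the explicit form of $e$, the restricted list of transformations (and idempotent powers thereof) that commute with a nontrivial power of $\alpha$, and the further fact that $\gamma_4$ must commute with $\beta$, one reaches a contradiction. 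This is a centralizer computation of the kind Araújo, Kinyon and Konieczny \cite{Commuting_graph_T_X} used for the lower bound of $\commgraph{\tr{X}}$, and it is here that the labelling of $\beta$ is fixed so that the constraints coming from adjacency to $\gamma_2$ and those coming (through $\gamma_4$) from adjacency to $\beta$ become incompatible. If instead $\gamma_3\in\ptr{X}\setminus\tr{X}$ --- the genuinely new situation --- then $\gamma_3$ commutes with the two full transformations $\gamma_2,\gamma_4\in\tr{X}$, so by Lemma~\ref{P(X): unified graph lemma} it is enough to check that $\graphptr{\gamma_2}{\gamma_4}$ is connected, which then forces $\gamma_3=\emptyset$, a contradiction. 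For this one observes that commuting with $\gamma_2$ and with $\gamma_4$ makes $\dom\gamma_3$ a proper non-empty subset of $X$ invariant under both $\gamma_2$ and $\gamma_4$; since $\gamma_2$ is a known power of $\alpha$ and $\gamma_4$ satisfies $\gamma_4^{m}=e$ and is adjacent to $\beta$, one shows no such common invariant set can exist --- equivalently, the edges contributed by a power of $\alpha$ together with those contributed by any admissible $\gamma_4$ already span $X$. Once both cases are excluded, $n\geqslant 6$, so $\dist{\commgraph{\ptr{X}}}{\alpha}{\beta}\geqslant 5$.

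The hard part is this last step, ruling out paths of length $4$; within it the subtlest point is the case $\gamma_3\in\ptr{X}\setminus\tr{X}$, where one must choose the labelling of $\beta$ and verify the connectedness of $\graphptr{\gamma_2}{\gamma_4}$ (equivalently, the absence of a common proper invariant subset) uniformly over all admissible pairs $\gamma_2,\gamma_4$ --- and this is exactly the combinatorial situation for which Lemma~\ref{P(X): unified graph lemma} was designed.
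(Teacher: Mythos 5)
Your strategy is the same as the paper's in outline (pin $\gamma_2$ to a power of $\alpha$ via Lemma~\ref{P(X): what commutes with cycles length |X|}, replace $\gamma_{n-1}$ by an explicit idempotent $e$ via Lemma~\ref{P(X): idempotent that commutes with chain+cycle}, and use Lemma~\ref{P(X): unified graph lemma} for a strictly partial middle vertex), but there are genuine gaps. First, you never fix $\beta$. Every later step is a computation with a concrete transformation --- the explicit form of $e$, the claim that $e$ commutes with no nontrivial power of $\alpha$, the connectedness of the relevant graphs $\graphptr{\cdot}{\cdot}$, and above all the non-existence of short paths inside $\tr{X}$ --- and ``labels chosen in general position'' is not a substitute: not every $\beta$ of the form $\chain{y_1 {\ldots} y_k x_1}\cycle{x_1 {\ldots} x_m}$ on six points satisfies $\dist{\commgraph{\tr{X}}}{\alpha}{\beta}\geqslant 5$. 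Second, and decisively, your case $\gamma_3\in\tr{X}$ when ruling out $n=5$ is exactly the assertion that there is no path of length $4$ from $\alpha$ to $\beta$ all of whose vertices are full transformations, i.e.\ that $\dist{\commgraph{\tr{X}}}{\alpha}{\beta}\geqslant 5$. You wave this through as ``a centralizer computation of the kind Araújo, Kinyon and Konieczny used,'' but for $\abs{X}=6$ those authors did not give such an argument: they verified the inequality \emph{computationally} for the specific pair $\alpha=\cycle{1 2 3 4 5 6}$, $\beta=\chain{6 4 1 2}\cycle{2 3 5}$, and the paper's proof simply chooses that pair and cites their verification (it also uses it to get $n\geqslant 5$, whereas your direct check that $e$ commutes with no $\alpha^{i}$ does work once $\beta$ is fixed to this pair). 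Without either carrying out that computation or invoking \cite{Commuting_graph_T_X}, your hardest sub-case is unproven.

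A smaller but real defect: in the case $\gamma_3\in\ptr{X}\setminus\tr{X}$ you propose to verify connectedness of $\graphptr{\gamma_2}{\gamma_4}$ ``uniformly over all admissible pairs,'' but $\gamma_4$ is not explicit (only its idempotent power $e$ is), so this uniform claim is not something you can actually check. The repair is the paper's move: since $\gamma_3$ commutes with $\gamma_4$, it commutes with $e=\gamma_4^{m}$, so it suffices to verify that the four explicit graphs $\graphptr{\alpha^{i}}{e}$, $i\in\set{2,3,4,5}$, are connected and then apply Lemma~\ref{P(X): unified graph lemma} to $\graphptr{\gamma_2}{e}$. With $\beta$ fixed as above, with that substitution made, and with the $\tr{X}$-distance fact cited rather than asserted, your argument becomes the paper's proof.
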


\begin{proof}
	Assume that $X=\set{1,2,3,4,5,6}$ and let
	\begin{displaymath}
		\alpha=\cycle{1 2 3 4 5 6} \quad \text{and} \quad \beta=\chain{6 4 1 2}\cycle{2 3 5}.
	\end{displaymath}
	In \cite[Theorem 2.22]{Commuting_graph_T_X} the authors mentioned that they verified computationally that $\dist{\commgraph{\tr{X}}}{\alpha}{\beta}\geqslant 5$. Our goal is to see that we also have $\dist{\commgraph{\ptr{X}}}{\alpha}{\beta}\geqslant 5$.
	
	Let
	\begin{displaymath}
		\alpha=\gamma_1-\gamma_2-\cdots-\gamma_n=\beta
	\end{displaymath}
	be a path of minimum length from $\alpha$ to $\beta$ (in $\commgraph{\ptr{X}}$). Since this path has length $n-1$, then $\dist{\commgraph{\ptr{X}}}{\alpha}{\beta}=n-1$. We will gradually prove that $\dist{\commgraph{\ptr{X}}}{\alpha}{\beta}$ cannot be $1$, $2$, $3$ or $4$.
	
	We have $2\alpha\beta=3\beta=5\neq 4=3\alpha=2\beta\alpha$. Hence $\alpha$ and $\beta$ are not adjacent and, consequently, $\dist{\commgraph{\ptr{X}}}{\alpha}{\beta}\geqslant 2$ (and $n\geqslant 3$).
	
	It follows from Lemma~\ref{P(X): what commutes with cycles length |X|} that $\alpha$, a cycle of length $6=\abs{X}$, only commutes with elements of the set $\set{\emptyset,\id{X},\alpha,\alpha^2,\alpha^3,\alpha^4,\alpha^5}$. We have that $\gamma_2\in\ptr{X}\setminus\set{\emptyset,\id{X},\alpha}$ and $\gamma_2$ commutes with $\alpha$. Hence $\gamma_2\in\set{\alpha^2,\alpha^3,\alpha^4,\alpha^5}\subseteq\sym{X}$. Additionally, the fact that $\gamma_{n-2}\in\ptr{X}\setminus\set{\id{X}}$ and the fact that $\gamma_{n-1}$ commutes with $\beta$ imply, together with Lemma~\ref{P(X): upper bound, chain + cycle must commute with id-->S(X)}, that $\gamma_{n-1}\in\ptr{X}\setminus\sym{X}$. Thus $\gamma_2\neq\gamma_{n-1}$, which implies that $n\geqslant 4$ and $\dist{\commgraph{\ptr{X}}}{\alpha}{\beta}\geqslant 3$.
	
	We know that there exists $s\in\mathbb{N}$ such that $e=\gamma_{n-1}^s$ is an idempotent. Moreover, in the previous paragraph we proved that $\gamma_{n-1}\in\tr{X}\setminus\sym{X}$, which implies that $e=\gamma_{n-1}^s\neq\id{X}$. Moreover, we have that $e=\gamma_{n-1}^s$ commutes with $\beta$ because $\gamma_{n-1}$ commutes with $\beta$. Then we can use Lemma~\ref{P(X): idempotent that commutes with chain+cycle} to see that
	\begin{displaymath}
		e=\chain{\set{2,6}, 2}\chain{\set{3,4}, 3}\chain{\set{5,1}, 5}.
	\end{displaymath}
	%(We note that $e$ is written according to the notation for idempotents of $\tr{X}$ we introduced in \eqref{T(X): idempotent notation}.)
	
	Since $\alpha=\gamma_1-\gamma_2-\cdots-\gamma_n=\beta$ is a path from $\alpha$ to $\beta$ of minimum length, $e$ cannot be any of the elements $\gamma_1,\ldots,\gamma_{n-2}$ (because, otherwise, we would obtain a shorter path from $\alpha$ to $\beta$). Thus
	\begin{displaymath}
		\alpha=\gamma_1-\gamma_2-\cdots-\gamma_{n-2}-e-\gamma_n=\beta
	\end{displaymath}
	is another path of minimum length from $\alpha$ to $\beta$ (in $\commgraph{\ptr{X}}$). (Note that $e=\gamma_{n-1}^s$ commutes with $\gamma_{n-2}$ and $\beta$ because $\gamma_{n-1}$ commutes with $\gamma_{n-1}$ and $\beta$.)
	
	Since $\dist{\commgraph{\tr{X}}}{\alpha}{\beta}\geqslant 5$, then there are no paths in $\commgraph{\tr{X}}$ from $\alpha$ to $\beta$ of length $3$, which implies that there are no paths in $\commgraph{\ptr{X}}$ from $\alpha$ to $\beta$ whose vertices are all full transformations and whose length is $3$. Hence $\alpha-\gamma_2-e-\beta$ cannot be a path in $\commgraph{\ptr{X}}$ (we note that $\alpha,\gamma_2,e,\beta\in\tr{X}\setminus\set{\id{X}}$). Thus we must have $n\geqslant 5$ and $\dist{\commgraph{\ptr{X}}}{\alpha}{\beta}\geqslant 4$.
	
	In order to finish the proof we just need to establish that $\dist{\commgraph{\ptr{X}}}{\alpha}{\beta}\geqslant 5$. This can be done be verifying that there is no vertex that is simultaneously adjacent to $\gamma_2$ and $e$. Let $\gamma\in\ptr{X}$ be such that $\gamma_2\gamma=\gamma\gamma_2$ and $e\gamma=\gamma e$. We have $\gamma\in\ptr{X}\setminus\tr{X}$ or $\gamma\in\tr{X}$, which motivates the following two cases.
	
	\smallskip
	
	\textit{Case 1:} Suppose that $\gamma\in\ptr{X}\setminus\tr{X}$. We want to see that $\gamma=\emptyset$. By Lemma~\ref{P(X): unified graph lemma} it is enough to see that $\graphptr{\gamma_2}{e}$ is connected. Moreover, we know that $\gamma_2\in\set{\alpha^2,\alpha^3,\alpha^4,\alpha^5}$. In Figure~\ref{P(X), Figure: unified graph alpha2 alpha4, e} we have the graph $\graphptr{\alpha^2}{e}=\graphptr{\alpha^4}{e}$, in Figure~\ref{P(X), Figure: unified graph alpha3, e} we have the graph $\graphptr{\alpha^3}{e}$, and in Figure~\ref{P(X), Figure: unified graph alpha5, e} we have the graph $\graphptr{\alpha^5}{e}$. By observing these three figures we can conclude that $\graphptr{\alpha^2}{e}$, $\graphptr{\alpha^3}{e}$, $\graphptr{\alpha^4}{e}$ and $\graphptr{\alpha^5}{e}$ are all connected, which implies that $\graphptr{\gamma_2}{e}$ is also connected (recall that $\gamma_2\in\set{\alpha^2,\alpha^3,\alpha^4,\alpha^5}$). Thus, by Lemma~\ref{P(X): unified graph lemma}, we have $\gamma=\emptyset\in\centre{\ptr{X}}$, which implies that $\gamma$ is not a vertex of $\commgraph{\ptr{X}}$.
	\begin{figure}[hbt]
		\begin{center}
			\begin{tikzpicture}
				\foreach \x in {0,-1,-2} {
					\node[vertex] (x\x) at (180+\x*360/3:0.8cm) {};
				}
				
				\foreach \x in {0,-1,-2} {
					\node[vertex] (y\x) at ($(-3,0)+(0+\x*360/3:0.8cm)$) {};
				}    
				
				\draw (180-1*360/30:1.16cm) node {$4$};
				\draw (180-1*360/3:1.2cm) node {$6$};
				\draw (180-2*360/3:1.2cm) node {$2$};
				
				\draw ($(-3,0)+(0+1*360/30:1.16cm)$) node {$3$};
				\draw ($(-3,0)+(0-1*360/3:1.2cm)$) node {$5$};
				\draw ($(-3,0)+(0-2*360/3:1.2cm)$) node {$1$};
				
				\begin{scope}[edge]
					%\draw (0,0) circle (1cm);
					%\draw (-4,0) circle (1cm);
					\draw (x0) -- (y0);
					\draw (x0) -- (x-1);
					\draw (x-1) -- (x-2);
					\draw (x-2) -- (x0);
					\draw (y0) -- (y-1);
					\draw (y-1) -- (y-2);
					\draw (y-2) -- (y0);
				\end{scope}
			\end{tikzpicture}
		\end{center}
		\caption{Graph $\graphptr{\alpha^2}{e}=\graphptr{\alpha^4}{e}$ where $\alpha=\cycle{1 2 3 4 5 6}$ and $e=\chain{\set{2,6}, 2}\allowbreak\chain{\set{3,4}, 3}\allowbreak\chain{\set{5,1}, 5}$.}
		\label{P(X), Figure: unified graph alpha2 alpha4, e}
	\end{figure}
	\begin{figure}[hbt]
		\begin{center}
			\begin{tikzpicture}
				\foreach \x in {0,-1,-2,-3,-4,-5} {
					\node[vertex] (x\x) at (180+\x*360/6:1cm) {};
				}
				
				\draw (180-1*360/6:1.4cm) node {$1$};
				\draw (180-2*360/6:1.4cm) node {$5$};
				\draw (180-3*360/6:1.4cm) node {$2$};
				\draw (180-4*360/6:1.4cm) node {$6$};
				\draw (180-5*360/6:1.4cm) node {$3$};
				\draw (180-0*360/6:1.4cm) node {$4$};
				
				\begin{scope}[edge]
					\draw (0,0) circle (1cm);
				\end{scope}
			\end{tikzpicture}
		\end{center}
		\caption{Graph $\graphptr{\alpha^3}{e}$ where $\alpha=\cycle{1 2 3 4 5 6}$ and $e=\chain{\set{2,6}, 2}\allowbreak\chain{\set{3,4}, 3}\allowbreak\chain{\set{5,1}, 5}$.}
		\label{P(X), Figure: unified graph alpha3, e}
	\end{figure}
	\begin{figure}[hbt]
		\begin{center}
			\begin{tikzpicture}
				\foreach \x in {0,-1,-2,-3,-4,-5} {
					\node[vertex] (x\x) at (180+\x*360/6:1cm) {};
				}
				
				\draw (180-1*360/6:1.4cm) node {$1$};
				\draw (180-2*360/6:1.4cm) node {$6$};
				\draw (180-3*360/6:1.4cm) node {$5$};
				\draw (180-4*360/6:1.4cm) node {$4$};
				\draw (180-5*360/6:1.4cm) node {$3$};
				\draw (180-0*360/6:1.4cm) node {$2$};
				
				\begin{scope}[edge]
					\draw (0,0) circle (1cm);
					\draw (x-1) -- (x-3);
					\draw (x0) -- (x-2);
				\end{scope}
			\end{tikzpicture}
		\end{center}
		\caption{Graph $\graphptr{\alpha^5}{e}$ where $\alpha=\cycle{1 2 3 4 5 6}$ and $e=\chain{\set{2,6}, 2}\allowbreak\chain{\set{3,4}, 3}\allowbreak\chain{\set{5,1}, 5}$.}
		\label{P(X), Figure: unified graph alpha5, e}
	\end{figure}
	
	\smallskip
	
	\textit{Case 2:} Suppose that $\gamma\in\tr{X}$. We aim to see that $\gamma=\id{X}$. We have that $\dist{\commgraph{\tr{X}}}{\alpha}{\beta}\geqslant 5$. This implies that, in $\commgraph{\tr{X}}$, there are no paths from $\alpha$ to $\beta$ of length $4$. Since $\commgraph{\tr{X}}$ is a subgraph of $\commgraph{\ptr{X}}$, then, in $\commgraph{\ptr{X}}$, there are no paths from $\alpha$ to $\beta$ whose vertices are all full transformations and whose length is $4$. Hence $\alpha-\gamma_2-\gamma-e-\beta$ cannot be a path (notice that $\alpha,\gamma_2,e,\beta\in\tr{X}\setminus\set{\id{X}}$ and $\gamma\in\tr{X}$) and, consequently, we must have $\gamma=\id{X}\in\centre{\ptr{X}}$. Therefore $\gamma$ is not a vertex of $\commgraph{\ptr{X}}$.
	
	\smallskip
	
	In both cases we established that $\gamma$ is not a vertex of $\commgraph{\ptr{X}}$. Since $\gamma$ is an arbitrary element of $\ptr{X}$ that commutes with $\gamma_2$ and $e$, then we can conclude that $\commgraph{\ptr{X}}$ has no vertex that is adjacent to both $\gamma_2$ and $e$. This implies that $\gamma_3\neq\gamma_{n-2}$. Hence $n\geqslant 6$ and $\dist{\commgraph{\ptr{X}}}{\alpha}{\beta}\geqslant 5$.
\end{proof}

\begin{lemma}\label{P(X): lower bound diam, |X|=8 composite}
	Suppose that $\abs{X}=8$. Then there exist $\alpha,\beta\in\ptr{X}\setminus\set{\emptyset,\id{X}}$ such that $\dist{\commgraph{\ptr{X}}}{\alpha}{\beta}\geqslant 5$.
\end{lemma}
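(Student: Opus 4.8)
The plan is to carry over, almost verbatim, the argument used for $\abs{X}=6$ in Lemma~\ref{P(X): lower bound diam, |X|=6 composite}. I would fix $X=\set{1,\ldots,8}$, let $\alpha$ be a cycle of length $8$, and choose $\beta=\chain{y_1 {\ldots} y_k x_1}\cycle{x_1 {\ldots} x_m}$ to be a chain-plus-cycle transformation on $X$ (so $m+k=8$) for which one knows --- either by the same kind of computational check used for $\abs{X}=6$ or directly from \cite[Theorem 2.22]{Commuting_graph_T_X} --- that $\dist{\commgraph{\tr{X}}}{\alpha}{\beta}\geqslant 5$. Fixing a geodesic $\alpha=\gamma_1-\gamma_2-\cdots-\gamma_n=\beta$ in $\commgraph{\ptr{X}}$, the aim is to show $n\geqslant 6$, that is, $\dist{\commgraph{\ptr{X}}}{\alpha}{\beta}\geqslant 5$.

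The successive bounds $\dist{\commgraph{\ptr{X}}}{\alpha}{\beta}\geqslant 2,3,4$ are obtained exactly as for $\abs{X}=6$. For $\geqslant 2$ one exhibits $z\in X$ with $z\alpha\beta\neq z\beta\alpha$. For $\geqslant 3$: by Lemma~\ref{P(X): what commutes with cycles length |X|} every neighbour of $\alpha$ other than $\emptyset$ and $\id{X}$ is a power of $\alpha$, so $\gamma_2\in\set{\alpha^2,\ldots,\alpha^7}\subseteq\sym{X}$; whereas Lemma~\ref{P(X): upper bound, chain + cycle must commute with id-->S(X)} forces $\gamma_{n-1}\notin\sym{X}$ (since $\gamma_{n-1}$ commutes with $\beta$ but $\gamma_{n-1}\neq\id{X}$), so $\gamma_2\neq\gamma_{n-1}$. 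For $\geqslant 4$: Lemma~\ref{P(X): upper bound, chain + cycle must commute with empty map-->P(X)-T(X)} gives $\gamma_{n-1}\in\tr{X}$, hence $\gamma_{n-1}\in\tr{X}\setminus\sym{X}$; a power $e=\gamma_{n-1}^{s}$ is an idempotent distinct from $\id{X}$ commuting with $\beta$, so by Lemma~\ref{P(X): idempotent that commutes with chain+cycle} the idempotent $e$ is completely determined (it fixes each $x_i$ and sends $y_i$ to $x_{i^*}$). Replacing $\gamma_{n-1}$ by $e$ yields a geodesic in which $\alpha-\gamma_2-e-\beta$ would be a path of length $3$ with all four vertices in $\tr{X}\setminus\set{\id{X}}$, contradicting $\dist{\commgraph{\tr{X}}}{\alpha}{\beta}\geqslant 5$; hence $n\geqslant 5$.

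The crucial step is $\dist{\commgraph{\ptr{X}}}{\alpha}{\beta}\geqslant 5$, which follows once we show that no vertex $\gamma$ of $\commgraph{\ptr{X}}$ is adjacent to both $\gamma_2$ and $e$: then the segment of the geodesic from $\gamma_2$ to $e$ cannot have length $2$, which together with $n\geqslant 5$ gives $n\geqslant 6$. If $\gamma\in\ptr{X}\setminus\tr{X}$: since $\gamma_2\in\set{\alpha^2,\ldots,\alpha^7}$ and $\graphptr{\alpha^{j}}{e}=\graphptr{\alpha^{8-j}}{e}$ (replacing a permutation by its inverse leaves $\graphptr{\cdot}{\cdot}$ unchanged), it suffices to verify that the four graphs $\graphptr{\alpha^{j}}{e}$ with $j\in\set{1,2,3,4}$ are connected; after drawing these explicitly (in the style of Figures~\ref{P(X), Figure: unified graph alpha2 alpha4, e}--\ref{P(X), Figure: unified graph alpha5, e}, but on $8$ vertices) Lemma~\ref{P(X): unified graph lemma} yields $\gamma=\emptyset\in\centre{\ptr{X}}$. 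If $\gamma\in\tr{X}$: then $\alpha-\gamma_2-\gamma-e-\beta$ would be a path of length $4$ all of whose vertices lie in $\tr{X}$, with $\alpha,\gamma_2,e,\beta$ all distinct from $\id{X}$, again contradicting $\dist{\commgraph{\tr{X}}}{\alpha}{\beta}\geqslant 5$; so $\gamma=\id{X}\in\centre{\ptr{X}}$. In either case $\gamma$ is not a vertex of $\commgraph{\ptr{X}}$, which completes the proof.

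The main obstacle I anticipate is the connectivity check for the graphs $\graphptr{\alpha^{j}}{e}$ in the first case: it needs the explicit form of $e$ (hence a concrete choice of $\beta$) and a short case analysis over $j\in\set{1,2,3,4}$ --- one more graph than in the $\abs{X}=6$ case. A secondary point to pin down beforehand is the choice of $\beta$: its chain-plus-cycle decomposition must be picked so that the resulting pair really does attain $\tr{X}$-distance at least $5$, which is exactly the input supplied by \cite[Theorem 2.22]{Commuting_graph_T_X} (handled there for $\abs{X}=6$ by a computational verification, and the same device applies for $\abs{X}=8$).
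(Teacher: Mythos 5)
Your proposal is correct and takes essentially the same route as the paper, which uses $\alpha=\cycle{1 2 3 4 5 6 7 8}$ and $\beta=\chain{7 6 8 5 4 1}\cycle{1 2 3}$ (hence $e=\chain{\set{1,8}, 1}\chain{\set{2,5,7}, 2}\chain{\set{3,4,6}, 3}$), the same chain of lemmas for the successive bounds $2,3,4$, and the same two-case argument ruling out a common neighbour of $\gamma_2$ and $e$. The connectivity checks you defer are exactly what the paper supplies in its figures for $\graphptr{\alpha^2}{e}=\graphptr{\alpha^6}{e}$, $\graphptr{\alpha^3}{e}=\graphptr{\alpha^5}{e}$, $\graphptr{\alpha^4}{e}$ and $\graphptr{\alpha^7}{e}$, in agreement with your inverse-symmetry reduction to four graphs.
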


\begin{proof}
	Assume that $X=\set{1,2,3,4,5,6,7,8}$ and let
	\begin{displaymath}
		\alpha=\cycle{1 2 3 4 5 6 7 8} \quad \text{and} \quad \beta=\chain{7 6 8 5 4 1}\cycle{1 2 3}.
	\end{displaymath}
	In \cite[Theorem 2.22]{Commuting_graph_T_X} it was mentioned that the authors verified through computations that $\dist{\commgraph{\tr{X}}}{\alpha}{\beta}\geqslant 5$. The aim of this proof is to see that we also have $\dist{\commgraph{\ptr{X}}}{\alpha}{\beta}\geqslant 5$.
	
	Let
	\begin{displaymath}
		\alpha=\gamma_1-\gamma_2-\cdots-\gamma_n=\beta
	\end{displaymath}
	be a path of minimum length from $\alpha$ to $\beta$ (in $\commgraph{\ptr{X}}$). It is clear that $\dist{\commgraph{\ptr{X}}}{\alpha}{\beta}=n-1$. In what follows we will progressively establish that $\dist{\commgraph{\ptr{X}}}{\alpha}{\beta}$ cannot be $1$, $2$, $3$ or $4$.
	
	We have that $\dist{\commgraph{\ptr{X}}}{\alpha}{\beta}\geqslant 2$ (and $n\geqslant 3$) because $2\alpha\beta=3\beta=1\neq 4=3\alpha=2\beta\alpha$.
	
	Since $\alpha$ is a cycle of length $8=\abs{X}$, then, by Lemma~\ref{P(X): what commutes with cycles length |X|}, the elements of $\ptr{X}$ that commute with $\alpha$ are precisely the ones in the set $\set{\emptyset,\id{X},\alpha,\alpha^2,\alpha^3,\alpha^4,\alpha^5,\alpha^6,\alpha^7}$. Hence $\gamma_2\in\set{\alpha^2,\alpha^3,\alpha^4,\alpha^5,\alpha^6,\alpha^7}\subseteq\sym{X}$ (because $\gamma_2$ commutes with $\alpha$ and $\gamma_2\in\ptr{X}\setminus\set{\emptyset,\id{X},\alpha}$). Additionally, we have that $\gamma_{n-2}\in\ptr{X}\setminus\set{\id{X}}$ and it commutes with $\beta$, which implies, by Lemma~\ref{P(X): upper bound, chain + cycle must commute with id-->S(X)}, that $\gamma_{n-1}\in\ptr{X}\setminus\sym{X}$. Then we must have $\gamma_2\neq\gamma_{n-1}$ and, consequently, we have that $n\geqslant 4$ and $\dist{\commgraph{\ptr{X}}}{\alpha}{\beta}\geqslant 3$.
	
	Let $s\in\mathbb{N}$ be such that $e=\gamma_{n-1}^s$ is an idempotent. Since $\gamma_{n-1}\in\tr{X}\setminus\sym{X}$, then we have $e=\gamma_{n-1}^s\neq\id{X}$. Furthermore, since $\gamma_{n-1}$ commutes with $\beta$, we have that $e=\gamma_{n-1}^s$ commutes with $\beta$, which means we can use Lemma~\ref{P(X): idempotent that commutes with chain+cycle} to conclude that
	\begin{displaymath}
		e=\chain{\set{1,8}, 1}\chain{\set{2,5,7}, 2}\chain{\set{3,4,6}, 3}.
	\end{displaymath}
	
	Since $\alpha=\gamma_1-\gamma_2-\cdots-\gamma_n=\beta$ is a path from $\alpha$ to $\beta$ of minimum length, we have that $e$ is distinct from $\gamma_1,\ldots,\gamma_{n-2}$ (because, otherwise, there would exist a smaller path from $\alpha$ to $\beta$). Therefore
	\begin{displaymath}
		\alpha=\gamma_1-\gamma_2-\cdots-\gamma_{n-2}-e-\gamma_n=\beta
	\end{displaymath}
	is also path of minimum length from $\alpha$ to $\beta$ (in $\commgraph{\ptr{X}}$). (Note that $e=\gamma_{n-1}^s$ commutes with $\gamma_{n-2}$ and $\beta$ because $\gamma_{n-1}$ commutes with $\gamma_{n-1}$ and $\beta$.)
	
	It follows from the fact that $\dist{\commgraph{\tr{X}}}{\alpha}{\beta}\geqslant 5$ that there are no paths in $\commgraph{\tr{X}}$ from $\alpha$ to $\beta$ of length $3$. Consequently, there are no paths in $\commgraph{\ptr{X}}$ from $\alpha$ to $\beta$ whose vertices are all full transformations and whose length is $3$, which implies that $\alpha-\gamma_2-e-\beta$ is not a path in $\commgraph{\ptr{X}}$ (because $\alpha,\gamma_2,e,\beta\in\tr{X}\setminus\set{\id{X}}$). Hence $n\geqslant 5$ and $\dist{\commgraph{\ptr{X}}}{\alpha}{\beta}\geqslant 4$.
	
	Finally, we are going to prove that $\dist{\commgraph{\ptr{X}}}{\alpha}{\beta}\geqslant 5$. It is enough to demonstrate that there is no vertex that is simultaneously adjacent to $\gamma_2$ and $e$. Let $\gamma\in\ptr{X}$ be such that $\gamma_2\gamma=\gamma\gamma_2$ and $e\gamma=\gamma e$. We are going to see that $\gamma\in\set{\emptyset,\id{X}}$. We have $\gamma\in\ptr{X}\setminus\tr{X}$ or $\gamma\in\tr{X}$, which motivates the division of the rest of the proof into two cases.
	
	\smallskip
	
	\textit{Case 1:} Suppose that $\gamma\in\ptr{X}\setminus\tr{X}$. We are going to see that $\graphptr{\gamma_2}{e}$ is connected. Since $\gamma_2\in\set{\alpha^2,\alpha^3,\alpha^4,\alpha^5,\alpha^6,\alpha^7}$, then, if we prove that $\graphptr{\alpha^i}{e}$ is connected for all $i\in\set{2,3,4,5,6,7}$, we can conclude that $\graphptr{\gamma_2}{e}$ is also connected. Figure~\ref{P(X), Figure: unified graph alpha2 alpha6, e (n=8)} illustrates the graph $\graphptr{\alpha^2}{e}=\graphptr{\alpha^6}{e}$, Figure~\ref{P(X), Figure: unified graph alpha3 alpha5, e (n=8)} illustrates the graph $\graphptr{\alpha^3}{e}=\graphptr{\alpha^5}{e}$, Figure~\ref{P(X), Figure: unified graph alpha4, e (n=8)} illustrates the graph $\graphptr{\alpha^4}{e}$, and Figure~\ref{P(X), Figure: unified graph alpha7, e (n=8)} illustrates the graph $\graphptr{\alpha^7}{e}$. If we analyse these figures, we can easily conclude that $\graphptr{\alpha^i}{e}$ is connected for all $i\in\set{2,3,4,5,6,7}$. Thus $\graphptr{\gamma_2}{e}$ is connected and, consequently, Lemma~\ref{P(X): unified graph lemma} implies that $\gamma=\emptyset$.
	\begin{figure}[hbt]
		\begin{center}
			\begin{tikzpicture}
				\foreach \x in {0,-1,-2,-3,-4,-5,-6,-7} {
					\node[vertex] (x\x) at (90+\x*360/8:1cm) {};
				}   
				
				\draw (90-0*360/8:1.4cm) node {$1$};
				\draw (90-1*360/8:1.4cm) node {$3$};
				\draw (90-2*360/8:1.4cm) node {$4$};
				\draw (90-3*360/8:1.4cm) node {$6$};
				\draw (90-4*360/8:1.4cm) node {$8$};
				\draw (90-5*360/8:1.4cm) node {$2$};
				\draw (90-6*360/8:1.4cm) node {$5$};
				\draw (90-7*360/8:1.4cm) node {$7$};
				
				\begin{scope}[edge]
					\draw (0,0) circle (1cm);
					\draw (x0) -- (x-4);
					\draw (x-1) -- (x-3);
					\draw (x-1) -- (x-6);
					\draw (x-5) -- (x-2);
					\draw (x-5) -- (x-7);
				\end{scope}
			\end{tikzpicture}
		\end{center}
		\caption{Graph $\graphptr{\alpha^2}{e}=\graphptr{\alpha^6}{e}$ where $\alpha=\cycle{1 2 3 4 5 6 7 8}$ and $e=\chain{\set{1,8}, 1}\allowbreak\chain{\set{2,5,7}, 2}\allowbreak\chain{\set{3,4,6}, 3}$.}
		\label{P(X), Figure: unified graph alpha2 alpha6, e (n=8)}
	\end{figure}
	\begin{figure}[hbt]
		\begin{center}
			\begin{tikzpicture}
				\foreach \x in {0,-1,-2,-3,-4,-5,-6,-7} {
					\node[vertex] (x\x) at (90+\x*360/8:1cm) {};
				}   
				
				\draw (90-0*360/8:1.4cm) node {$1$};
				\draw (90-1*360/8:1.4cm) node {$4$};
				\draw (90-2*360/8:1.4cm) node {$7$};
				\draw (90-3*360/8:1.4cm) node {$2$};
				\draw (90-4*360/8:1.4cm) node {$5$};
				\draw (90-5*360/8:1.4cm) node {$8$};
				\draw (90-6*360/8:1.4cm) node {$3$};
				\draw (90-7*360/8:1.4cm) node {$6$};
				
				\begin{scope}[edge]
					\draw (0,0) circle (1cm);
					\draw (x0) -- (x-5);
					\draw (x-1) -- (x-6);
				\end{scope}
			\end{tikzpicture}
		\end{center}
		\caption{Graph $\graphptr{\alpha^3}{e}=\graphptr{\alpha^5}{e}$ where $\alpha=\cycle{1 2 3 4 5 6 7 8}$ and $e=\chain{\set{1,8}, 1}\allowbreak\chain{\set{2,5,7}, 2}\allowbreak\chain{\set{3,4,6}, 3}$.}
		\label{P(X), Figure: unified graph alpha3 alpha5, e (n=8)}
	\end{figure}
	\begin{figure}[hbt]
		\begin{center}
			\begin{tikzpicture}
				\foreach \x in {0,-1,-2,-3,-4,-5,-6} {
					\node[vertex] (x\x) at (90+\x*360/7:1cm) {};
				}   
				
				\draw (90-0*360/7:1.4cm) node {$1$};
				\draw (90-1*360/7:1.4cm) node {$5$};
				\draw (90-2*360/7:1.4cm) node {$2$};
				\draw (90-3*360/7:1.4cm) node {$7$};
				\draw (90-4*360/7:1.4cm) node {$3$};
				\draw (90-5*360/7:1.4cm) node {$4$};
				\draw (90-6*360/7:1.4cm) node {$8$};

				\node[vertex] (6) at (0,0) {};
				\node[anchor=south,inner sep=2mm] at (6) {$6$};
				
				\begin{scope}[edge]
					\draw (0,0) circle (1cm);
					\draw (x-2) -- (6);
					\draw (x-4) -- (6);
				\end{scope}
			\end{tikzpicture}
		\end{center}
		\caption{Graph $\graphptr{\alpha^4}{e}$ where $\alpha=\cycle{1 2 3 4 5 6 7 8}$ and $e=\chain{\set{1,8}, 1}\allowbreak\chain{\set{2,5,7}, 2}\allowbreak\chain{\set{3,4,6}, 3}$.}
		\label{P(X), Figure: unified graph alpha4, e (n=8)}
	\end{figure}
	\begin{figure}[h!bt]
		\begin{center}
			\begin{tikzpicture}
				\foreach \x in {0,-1,-2,-3,-4,-5,-6,-7} {
					\node[vertex] (x\x) at (90+\x*360/8:1cm) {};
				}   
				
				\draw (90-0*360/8:1.4cm) node {$1$};
				\draw (90-1*360/8:1.4cm) node {$8$};
				\draw (90-2*360/8:1.4cm) node {$7$};
				\draw (90-3*360/8:1.4cm) node {$6$};
				\draw (90-4*360/8:1.4cm) node {$5$};
				\draw (90-5*360/8:1.4cm) node {$4$};
				\draw (90-6*360/8:1.4cm) node {$3$};
				\draw (90-7*360/8:1.4cm) node {$2$};
				
				\begin{scope}[edge]
					\draw (0,0) circle (1cm);
					\draw (x-7) -- (x-2);
					\draw (x-7) -- (x-4);
					\draw (x-3) -- (x-6);
				\end{scope}
			\end{tikzpicture}
		\end{center}
		\caption{Graph $\graphptr{\alpha^7}{e}$ where $\alpha=\cycle{1 2 3 4 5 6 7 8}$ and $e=\chain{\set{1,8}, 1}\allowbreak\chain{\set{2,5,7}, 2}\allowbreak\chain{\set{3,4,6}, 3}$.}
		\label{P(X), Figure: unified graph alpha7, e (n=8)}
	\end{figure}
	
	\smallskip
	
	\textit{Case 2:} Suppose that $\gamma\in\tr{X}$. We know that, in $\commgraph{\tr{X}}$, there are no paths from $\alpha$ to $\beta$ of length $4$ (since $\dist{\commgraph{\tr{X}}}{\alpha}{\beta}\geqslant 5$). Hence, in $\commgraph{\ptr{X}}$, there are no paths from $\alpha$ to $\beta$ whose vertices are all full transformations and whose length is $4$. Then $\alpha-\gamma_2-\gamma-e-\beta$ is not a path and, thus, we must have $\gamma=\id{X}$.
	
	\smallskip
	
	In both cases we concluded that $\gamma\in\set{\emptyset,\id{X}}$, which allows us to conclude that $\commgraph{\ptr{X}}$ contains no vertex adjacent to both $\gamma_2$ and $e$. Therefore $n\geqslant 6$ and $\dist{\commgraph{\ptr{X}}}{\alpha}{\beta}\geqslant 5$, which concludes the proof.
\end{proof}

\begin{lemma}\label{P(X): lower bound diam, |X|>=9 composite}
	Suppose that $\abs{X}\geqslant 9$ is composite. Then there exist $\alpha,\beta\in\ptr{X}\setminus\set{\emptyset,\id{X}}$ such that $\dist{\commgraph{\ptr{X}}}{\alpha}{\beta}\geqslant 5$.
\end{lemma}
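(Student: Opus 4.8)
The plan is to follow the same template as the proofs of Lemmata~\ref{P(X): lower bound diam, |X|=6 composite} and~\ref{P(X): lower bound diam, |X|=8 composite}, the only genuinely new ingredient being a figure-free verification of a connectivity statement. Write $n=\abs{X}$. First I would take $\alpha=\cycle{x_1 x_2 {\ldots} x_n}$, a cycle of length $n$, and for $\beta$ the chain-plus-cycle full transformation $\chain{y_1 {\ldots} y_k x_1}\cycle{x_1 {\ldots} x_m}$ (for a suitable relabelling of $X$ and suitable $m,k$ with $m+k=n$) used by Araújo, Kinyon and Konieczny in their treatment of the composite case of \cite[Theorem~2.22]{Commuting_graph_T_X}; for this pair one has $\dist{\commgraph{\tr{X}}}{\alpha}{\beta}\geqslant 5$. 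Let $\alpha=\gamma_1-\gamma_2-\cdots-\gamma_n=\beta$ be a path of minimum length in $\commgraph{\ptr{X}}$, so that $\dist{\commgraph{\ptr{X}}}{\alpha}{\beta}=n-1$; the goal is to show $n\geqslant 6$.

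The first three bounds are obtained exactly as in the cases $\abs{X}=6,8$. A direct computation gives $\alpha\beta\neq\beta\alpha$, so $\dist{\commgraph{\ptr{X}}}{\alpha}{\beta}\geqslant 2$. Since $\alpha$ is a cycle of length $n$, Lemma~\ref{P(X): what commutes with cycles length |X|} forces $\gamma_2=\alpha^{i}$ for some $i\in\set{2,\ldots,n-1}$, hence $\gamma_2\in\sym{X}$; on the other hand $\gamma_{n-1}$ commutes with $\beta$ and lies in $\ptr{X}\setminus\set{\emptyset,\id{X}}$, so Lemmata~\ref{P(X): upper bound, chain + cycle must commute with id-->S(X)} and~\ref{P(X): upper bound, chain + cycle must commute with empty map-->P(X)-T(X)} give $\gamma_{n-1}\in\tr{X}\setminus\sym{X}$; as $\gamma_2\neq\gamma_{n-1}$ this excludes $n=3$ and yields $\dist{\commgraph{\ptr{X}}}{\alpha}{\beta}\geqslant 3$. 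Next, some power $e=\gamma_{n-1}^{s}$ is an idempotent with $e\neq\id{X}$ and $e\beta=\beta e$, so Lemma~\ref{P(X): idempotent that commutes with chain+cycle} pins down $e$ completely: it is an idempotent $\chain{A_1, x_1}\chain{A_2, x_2}\cdots\chain{A_m, x_m}$ whose blocks $A_j$ are described by an explicit congruence condition modulo $m$. Since $e$ is distinct from $\gamma_1,\ldots,\gamma_{n-2}$ by minimality of the path, one may replace $\gamma_{n-1}$ by $e$; then, because $\alpha,\gamma_2,e,\beta\in\tr{X}\setminus\set{\id{X}}$ and $\dist{\commgraph{\tr{X}}}{\alpha}{\beta}\geqslant 5$ rules out a path of length $3$ on full transformations, $\alpha-\gamma_2-e-\beta$ is not a path, which gives $\dist{\commgraph{\ptr{X}}}{\alpha}{\beta}\geqslant 4$.

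It remains to prove $\dist{\commgraph{\ptr{X}}}{\alpha}{\beta}\geqslant 5$, that is, that $\commgraph{\ptr{X}}$ has no vertex adjacent to both $\gamma_2=\alpha^{i}$ and $e$; equivalently, that every $\gamma\in\ptr{X}$ commuting with $\alpha^{i}$ and with $e$ belongs to $\set{\emptyset,\id{X}}$. The case $\gamma\in\tr{X}$ is handled as before: a full transformation $\gamma\neq\id{X}$ would make $\alpha-\gamma_2-\gamma-e-\beta$ a path of length $4$ on full transformations, contradicting $\dist{\commgraph{\tr{X}}}{\alpha}{\beta}\geqslant 5$, so $\gamma=\id{X}$.

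Suppose now $\gamma\in\ptr{X}\setminus\tr{X}$. By Lemma~\ref{P(X): unified graph lemma} it suffices to show that $\graphptr{\alpha^{i}}{e}$ is connected for every $i\in\set{2,\ldots,n-1}$, and this is where, unlike in the cases $\abs{X}=6,8$, one cannot simply inspect finitely many pictures. The reduction I would use is this: the edges of $\graphptr{\alpha^{i}}{e}$ contributed by $\alpha^{i}$ split $X$ into the $d=\gcd(n,i)$ orbits of $\langle\alpha^{i}\rangle$, namely the residue classes of $\set{1,\ldots,n}$ modulo $d$, and within each orbit those edges already form a cycle (or a single edge); hence $\graphptr{\alpha^{i}}{e}$ is connected if and only if the quotient graph $H_d$ — on vertex set $\set{0,\ldots,d-1}$, with an edge between $a$ and $b$ exactly when some block $A_j$ of $e$ meets both residue classes $a$ and $b$ modulo $d$ — is connected, and $H_d$ depends only on $d$ (and on $e$), not on $i$. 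For $d=1$ this is automatic, since the $\alpha^{i}$-edges alone form an $n$-cycle; so one is reduced to checking that $H_d$ is connected for each divisor $d$ of $n$ with $2\leqslant d<n$, directly from the explicit blocks $A_j$. This verification is the main obstacle of the proof: it takes the place of the case-by-case figure checking of Lemmata~\ref{P(X): lower bound diam, |X|=6 composite} and~\ref{P(X): lower bound diam, |X|=8 composite}, and it amounts to tracking, for each proper divisor $d\geqslant 2$ of $n$, how the blocks $A_j$ meet the congruence classes modulo $d$ — presumably through a short case analysis on the divisors of $n$, possibly after a slight adjustment of the relabelling in the choice of $\beta$ so that the blocks cut across every such family of classes. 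Granting this, $\gamma\in\set{\emptyset,\id{X}}$ in both cases, so $\gamma_2$ and $e$ have no common neighbour in $\commgraph{\ptr{X}}$; this implies $\gamma_3\neq\gamma_{n-2}$, hence $n\geqslant 6$, and the proof is complete.
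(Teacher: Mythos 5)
There is a genuine gap, and it sits exactly where you yourself flag ``the main obstacle''. Your argument needs two inputs that are never actually established. First, you assert that for your pair --- a full cycle $\alpha$ of length $\abs{X}$ and a chain-plus-cycle transformation $\beta$ --- one has $\dist{\commgraph{\tr{X}}}{\alpha}{\beta}\geqslant 5$. This is not available from \cite[Theorem 2.22]{Commuting_graph_T_X}: the cycle/chain-plus-cycle pairs are only verified (computationally) there for $\abs{X}=6,8$, while for composite $\abs{X}\geqslant 9$ the distance-$5$ pairs used in that proof are \emph{two} chain-plus-cycle transformations, so your claimed lower bound in $\commgraph{\tr{X}}$ is an unproven assertion on which the whole argument rests. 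Second, the decisive step --- that $\graphptr{\alpha^{i}}{e}$ is connected for every $i\in\set{2,\ldots,\abs{X}-1}$ --- is only reduced to the connectedness of a quotient graph $H_d$ for each proper divisor $d\geqslant 2$ of $\abs{X}$, and then deferred with ``presumably'' and ``possibly after a slight adjustment of the relabelling'' of $\beta$. Whether $H_d$ is connected depends delicately on how the blocks of the idempotent $e$ (which are determined by the split $\abs{X}=m+k$ chosen for $\beta$) meet the residue classes modulo $d$; for instance when $d=\abs{X}/2$ the $\alpha^{i}$-edges form only a perfect matching, so everything hinges on this unperformed verification for a concretely specified $\beta$. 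As written, the proof is incomplete at both points.

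This is precisely why the paper abandons the template of Lemmata~\ref{P(X): lower bound diam, |X|=6 composite} and~\ref{P(X): lower bound diam, |X|=8 composite} when $\abs{X}\geqslant 9$: it takes $\alpha$ and $\beta$ to be the two chain-plus-cycle transformations from the proof of \cite[Theorem 2.22]{Commuting_graph_T_X} (with separate odd and even cases), so that the known bound $\dist{\commgraph{\tr{X}}}{\alpha}{\beta}\geqslant 5$ applies directly, and Lemmata~\ref{P(X): upper bound, chain + cycle must commute with id-->S(X)} and~\ref{P(X): upper bound, chain + cycle must commute with empty map-->P(X)-T(X)} force \emph{both} $\gamma_2$ and $\gamma_{n-1}$ into $\tr{X}\setminus\sym{X}$; their idempotent powers $e$ and $f$ are then pinned down explicitly by Lemma~\ref{P(X): idempotent that commutes with chain+cycle}, and only the single graph $\graphptr{e}{f}$ has to be shown connected. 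That graph has a structure uniform in $m$ (essentially one long path with $z$, and in the even case $w$, attached), so no divisor-by-divisor analysis and no unverified distance bound for a cycle/chain pair is needed. To salvage your route you would have to prove both missing ingredients; otherwise you should switch to the paper's choice of $\alpha$ and $\beta$.
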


\begin{proof}
	We divide the proof into two parts: the first part concerns the case where $\abs{X}$ is odd and the second part concerns the case where $\abs{X}$ is even.
	
	\medskip
	
	\textbf{Part 1.} Suppose that $\abs{X}=2m+1$ for some $m\geqslant 4$. Assume that $X=\set{x_1,\ldots,x_m}\cup\set{y_1,\ldots y_m}\cup\set{z}$ and let
	\begin{displaymath}
		\alpha=\chain{z y_1 y_2 {\ldots} y_m x_1}\cycle{x_1 x_2 {\ldots} x_m}
		\quad \text{and} \quad
		\beta=\chain{x_2 x_3 {\ldots} x_m x_1 z y_2}\cycle{y_2 {\ldots} y_m y_1}.
	\end{displaymath}
	
	In the proof of \cite[Theorem 2.22]{Commuting_graph_T_X} the authors showed that $\dist{\commgraph{\tr{X}}}{\alpha}{\beta}\geqslant 5$. Our goal is to prove that we also have $\dist{\commgraph{\ptr{X}}}{\alpha}{\beta}\geqslant 5$.
	
	Let
	\begin{displaymath}
		\alpha=\gamma_1-\gamma_2-\cdots-\gamma_n=\beta
	\end{displaymath}
	a path of minimum length from $\alpha$ to $\beta$ (in $\commgraph{\ptr{X}}$). Then $\dist{\commgraph{\ptr{X}}}{\alpha}{\beta}=n-1$.
	
	We observe that, since $\dist{\commgraph{\tr{X}}}{\alpha}{\beta}\geqslant 5$, then $\alpha$ and $\beta$ are not adjacent in $\commgraph{\tr{X}}$; that is, $\alpha\beta\neq\beta\alpha$. Thus $\alpha$ and $\beta$ are not adjacent in $\commgraph{\ptr{X}}$ and, thus $\dist{\commgraph{\ptr{X}}}{\alpha}{\beta}\geqslant 2$ (and $n\geqslant 3$).
	
	%This implies that, in $\commgraph{\tr{X}}$, there are no paths from $\alpha$ to $\beta$ of length at most $4$. Since $\commgraph{\tr{X}}$ is a subgraph of $\commgraph{\ptr{X}}$, then, in $\commgraph{\ptr{X}}$, there are no paths from $\alpha$ to $\beta$ whose vertices are all full transformations and whose length is at most $4$.
	
	The fact that $\gamma_2$ and $\gamma_{n-1}$ are vertices of $\commgraph{\ptr{X}}$ implies that $\gamma_2,\gamma_{n-1}\in\ptr{X}\setminus\set{\emptyset,\id{X}}$. Moreover, we have that $\alpha\gamma_2=\gamma_2\alpha$ and $\beta\gamma_{n-1}=\gamma_{n-1}\beta$. By Lemma~\ref{P(X): upper bound, chain + cycle must commute with id-->S(X)}, we have that $\gamma_2,\gamma_{n-1}\in \ptr{X}\setminus\sym{X}$ and, by Lemma~\ref{P(X): upper bound, chain + cycle must commute with empty map-->P(X)-T(X)}, we have that $\gamma_2,\gamma_{n-1}\in\tr{X}$. Thus $\gamma_2,\gamma_{n-1}\in\tr{X}\setminus\sym{X}$. Additionally, there exist $s,t\in\mathbb{N}$ such that $e=\gamma_2^s$ and $f=\gamma_{n-1}^t$ are idempotents. Due to the fact $\gamma_2,\gamma_{n-1}\in\tr{X}\setminus\sym{X}$, we can conclude that $e,f\in\tr{X}\setminus\set{\id{X}}$. Moreover, we have $\alpha e=e\alpha$ and $\beta f=f\beta$ (because $\alpha\gamma_2=\gamma_2\alpha$ and $\beta\gamma_{n-1}=\gamma_{n-1}\beta$). Hence, by Lemma~\ref{P(X): idempotent that commutes with chain+cycle}, we must have
	\begin{gather*}
		e=\chain{\set{x_1,y_1}, x_1}\cdots\chain{\set{x_{m-1},y_{m-1}}, x_{m-1}}\chain{\set{x_m,y_m,z}, x_m}\\
		\shortintertext{and}
		f=\chain{\set{y_1,x_2,z}, y_1}\chain{\set{y_2,x_3}, y_2}\cdots\chain{\set{y_{m-1},x_m}, y_{m-1}}\chain{\set{y_m,x_1}, y_m}.
	\end{gather*}
	
	It is clear that $e\neq f$. This can be proved simply by verifying that $\im e=\set{x_1,\ldots,x_m}\neq\set{y_1,\ldots,y_m}=\im f$. Hence $\gamma_2\neq\gamma_{n-1}$ and, consequently, we must have $n\geqslant 4$ and $\dist{\commgraph{\ptr{X}}}{\alpha}{\beta}\geqslant 3$.

	Since $\gamma_2$ commutes with $\alpha$ and $\gamma_3$, and $\gamma_{n-1}$ commutes with $\beta$ and $\gamma_{n-2}$, then we also have that $e$ commutes $\alpha$ and $\gamma_3$, and $f$ commutes with $\beta$ and $\gamma_{n-2}$. Moreover, since the path we are considering is of minimum length, it is clear that none of the partial transformations $\gamma_3,\ldots,\gamma_n$ is equal to $e$, and none of the partial transformations $\gamma_1,\ldots,\gamma_{n-2}$ is equal to $f$. Thus
	\begin{displaymath}
		\alpha=\gamma_1-e-\gamma_3-\cdots-\gamma_{n-2}-f-\gamma_n=\beta
	\end{displaymath}
	is also a path of minimum length from $\alpha$ to $\beta$ (in $\commgraph{\ptr{X}}$).
	
	We observe that we have $n\geqslant 5$; that is, $\dist{\commgraph{\ptr{X}}}{\alpha}{\beta}\geqslant 4$, because $ef\neq fe$ (we notice that we have, for example, $x_1 ef=x_1f=y_m\neq x_m=y_m e=x_1 fe$). This means that, in the path above, there must be at least one element between $e$ and $f$.
	
	Now we are going to see that there is no vertex that is simultaneously adjacent to $e$ and $f$. Let $\gamma\in\ptr{X}$ be such that $e\gamma=\gamma e$ and $f\gamma=\gamma f$. We have $\gamma\in\ptr{X}\setminus\tr{X}$ or $\gamma\in\tr{X}$.
	
	\smallskip
	
	\textit{Case 1:} Suppose that $\gamma\in\ptr{X}\setminus\tr{X}$. We have that $\graphptr{e}{f}$ is connected --- this can be verified by observation of Figure~\ref{P(X), Figure: unified graph e, f odd case}, which shows the graph $\graphptr{e}{f}$. Then, by Lemma~\ref{P(X): unified graph lemma}, we have that $\gamma=\emptyset$ and, consequently, $\gamma$ is not a vertex of $\commgraph{\ptr{X}}$.
	\begin{figure}[hbt]
		\begin{center}
			\begin{tikzpicture}
				\foreach \x in {0,-1,-2,-3,-4,-6,-7,-8} {
					\node[vertex] (x\x) at (90+\x*360/9:1cm) {};
				}
				
				\draw (90:1.4cm) node {$x_1$};
				\draw (90-1*360/9:1.4cm) node {$y_1$};
				\draw (90-2*360/9:1.4cm) node {$x_2$};
				\draw (90-3*360/9:1.4cm) node {$y_2$};
				\draw (90-4*360/9:1.4cm) node {$x_3$};
				\draw[xshift=-1mm] (90-6*360/9:1.4cm) node {$y_{m-1}$};
				\draw (90-7*360/9:1.4cm) node {$x_m$};
				\draw (90-8*360/9:1.4cm) node {$y_m$};
				
				\node[vertex] (z) at (0,0) {};
				\node[anchor=north,inner sep=2mm] at (z) {$z$};
				
				\begin{scope}[edge]
					\draw (90:1cm) arc (90:90-1*360/9:1cm);
					\draw (90-1*360/9:1cm) arc (90-1*360/9:90-2*360/9:1cm);
					\draw (90-2*360/9:1cm) arc (90-2*360/9:90-3*360/9:1cm);
					\draw (90-3*360/9:1cm) arc (90-3*360/9:90-4*360/9:1cm);
					\draw[dotted] (90-4*360/9:1cm) arc (90-4*360/9:90-6*360/9:1cm);
					\draw (90-6*360/9:1cm) arc (90-6*360/9:90-7*360/9:1cm);
					\draw (90-7*360/9:1cm) arc (90-7*360/9:90-8*360/9:1cm);
					\draw (90-8*360/9:1cm) arc (90-8*360/9:90-9*360/9:1cm);
					
					\draw (z) -- (x-1);
					\draw (z) -- (x-7);
				\end{scope}
			\end{tikzpicture}
		\end{center}
		\caption{Graph $\graphptr{e}{f}$ where $e=\chain{\set{x_1,y_1}, x_1}\cdots\allowbreak\chain{\set{x_{m-1},y_{m-1}}, x_{m-1}}\allowbreak\chain{\set{x_m,y_m,z}, x_m}$ and $f=\chain{\set{y_1,x_2,z}, y_1}\allowbreak\chain{\set{y_2,x_3}, y_2}\cdots\allowbreak\chain{\set{y_{m-1},x_m}, y_{m-1}}\allowbreak\chain{\set{y_m,x_1}, y_m}$.}
		\label{P(X), Figure: unified graph e, f odd case}
	\end{figure}
	
	\smallskip
	
	\textit{Case 2:} Suppose that $\gamma\in\tr{X}$. We have mentioned before that $\dist{\commgraph{\tr{X}}}{\alpha}{\beta}\geqslant 5$. This implies that, in $\commgraph{\tr{X}}$, there are no paths from $\alpha$ to $\beta$ of length at most $4$. Since $\commgraph{\tr{X}}$ is a subgraph of $\commgraph{\ptr{X}}$, then, in $\commgraph{\ptr{X}}$, there are no paths from $\alpha$ to $\beta$ whose vertices are all full transformations and whose length is at most $4$. Hence we must have $\gamma=\id{X}$ (because, otherwise, $\alpha-e-\gamma-f-\beta$ would be a path of length $4$ formed by full transformations). Consequently, $\gamma$ is not a vertex of $\commgraph{\ptr{X}}$.
	
	In both cases we established that $\gamma$ is not a vertex of $\commgraph{\ptr{X}}$. Hence there is no vertex that is adjacent to $e$ and $f$. This implies that $\gamma_3\neq\gamma_{n-2}$; that is, $n\geqslant 6$. Thus $\dist{\commgraph{\ptr{X}}}{\alpha}{\beta}\geqslant 5$, which concludes the proof of part 1.
	
	%We have that $\alpha,\beta,e,f$ are pairwise distinct. This can be proved simply by verifying that $\im\alpha=\set{x_1,\ldots,x_m,y_1,\ldots,y_m}$ and $\im\beta=\set{x_1,x_3,\ldots,x_m,y_1,\ldots,y_m,z}$ and $\im e=\set{x_1,\ldots,x_m}$ and $\im f=\set{y_1,\ldots,y_m}$.
	
	% Assume, with the aim of obtaining a contradiction, that $n=5$. Then we have the path
	% \begin{displaymath}
		%     \alpha-e-\gamma_3-f-\beta,
		% \end{displaymath}
	% which has length $4$. We have mentioned before that $\dist{\commgraph{\tr{X}}}{\alpha}{\beta}\geqslant 5$. This implies that, in $\commgraph{\tr{X}}$, there are no paths from $\alpha$ to $\beta$ of length at most $4$. Since $\commgraph{\tr{X}}$ is a subgraph of $\commgraph{\ptr{X}}$, then, in $\commgraph{\ptr{X}}$, there are no paths from $\alpha$ to $\beta$ whose vertices are all full transformations and whose length is at most $4$. Hence we must have $\gamma_3\in\ptr{X}\setminus\tr{X}$ (since $\alpha,e,f,\beta\in\tr{X}$). Moreover, we have that $\graphptr{e}{f}$ is connected --- this can be verified by observation of Figure~\ref{P(X), Figure: unified graph e, f odd case}, which shows the graph $\graphptr{e}{f}$. Then, by Lemma~\ref{P(X): unified graph lemma}, we have that $\gamma_3=\emptyset$, which is a contradiction. This implies that there is no
	
	\medskip
	
	\textbf{Part 2.} Suppose that $\abs{X}=2m+2$ for some $m\geqslant 4$. Assume that $X=\set{x_1,\ldots,x_m}\cup\set{y_1,\ldots y_m}\cup\set{z,w}$. In the proof of \cite[Theorem 2.22]{Commuting_graph_T_X} the authors showed that $\dist{\commgraph{\tr{X}}}{\alpha}{\beta}\geqslant 5$, where
	\begin{gather*}
		\alpha=\chain{z y_1 y_2 {\ldots} y_m w x_2}\cycle{x_2 {\ldots} x_m x_1}\\
		\shortintertext{and}
		\beta=\chain{w x_2 x_3 {\ldots} x_{m-2} x_m x_1 x_{m-1} z y_2}\cycle{y_2 {\ldots} y_m y_1}.\mbox{\footnotemark[1]}
	\end{gather*}
	\footnotetext[1]{Although in \cite[Theorem 2.22]{Commuting_graph_T_X} the authors defined $\beta$ as the full transformation $\chain{w x_2 x_3 {\ldots} x_{m-2} x_m x_1 x_{m-1} y_2}\allowbreak\cycle{y_2 {\ldots} y_m y_1}$ (where the image of $z$ is not defined), if we follow the proof closely we can see that the full transformation they meant is actually $\chain{w x_2 x_3 {\ldots} x_{m-2} x_m x_1 x_{m-1} z y_2}\allowbreak\cycle{y_2 {\ldots} y_m y_1}$ --- the one we are using.}
	Our objective is to establish that $\dist{\commgraph{\ptr{X}}}{\alpha}{\beta}\geqslant 5$.
	
	Let
	\begin{displaymath}
		\alpha=\gamma_1-\gamma_2-\cdots-\gamma_n=\beta
	\end{displaymath}
	be a path of minimum length from $\alpha$ to $\beta$ (in $\commgraph{\ptr{X}}$). It is clear that $\dist{\commgraph{\ptr{X}}}{\alpha}{\beta}=n-1$.
	
	Due to the fact that $\alpha$ and $\beta$ are not adjacent in $\commgraph{\ptr{X}}$, then they are also not adjacent in $\commgraph{\ptr{X}}$ and, consequently, we have $\dist{\commgraph{\ptr{X}}}{\alpha}{\beta}\geqslant 2$ (and $n\geqslant 3$).
	
	%This implies that, in $\commgraph{\tr{X}}$, there are no paths from $\alpha$ to $\beta$ of length at most $4$. Since $\commgraph{\tr{X}}$ is a subgraph of $\commgraph{\ptr{X}}$, then, in $\commgraph{\ptr{X}}$, there are no paths from $\alpha$ to $\beta$ whose vertices are all full transformations and whose length is at most $4$.
	
	We know that there exist $s,t\in\mathbb{N}$ such that $e=\gamma_2^s$ and $f=\gamma_{n-1}^t$ are idempotents. Moreover, we have that $\gamma_2,\gamma_{n-1}\in\ptr{X}\setminus\set{\emptyset,\id{X}}$ and $\alpha\gamma_2=\gamma_2\alpha$ and $\beta\gamma_{n-1}=\gamma_{n-1}\beta$. Hence Lemma~\ref{P(X): upper bound, chain + cycle must commute with id-->S(X)} implies that $\gamma_2,\gamma_{n-1}\in \ptr{X}\setminus\sym{X}$, and Lemma~\ref{P(X): upper bound, chain + cycle must commute with empty map-->P(X)-T(X)} implies that $\gamma_2,\gamma_{n-1}\in\tr{X}$. Therefore $\gamma_2,\gamma_{n-1}\in\tr{X}\setminus\sym{X}$ and, consequently, we must have $e,f\in\tr{X}\setminus\set{\id{X}}$. In addition, we have $\alpha e=e\alpha$ and $\beta f=f\beta$ (because $\alpha\gamma_2=\gamma_2\alpha$ and $\beta\gamma_{n-1}=\gamma_{n-1}\beta$). Hence, by Lemma~\ref{P(X): idempotent that commutes with chain+cycle}, we must have
	\begin{displaymath}
		e=\chain{\set{x_1,y_1,w}, x_1}\chain{\set{x_2,y_2}, x_2}\cdots\chain{\set{x_{m-1},y_{m-1}}, x_{m-1}}\chain{\set{x_m,y_m,z}, x_m}
	\end{displaymath}
	and
	\begin{multline*}
		f=\chain{\set{y_1,x_2,z}, y_1}\chain{\set{y_2,x_3}, y_2}\cdots\chain{\set{y_{m-3},x_{m-2}}, y_{m-3}}\\
		\chain{\set{y_{m-2},x_m}, y_{m-2}}\chain{\set{y_{m-1},x_1}, y_{m-1}}\chain{\set{y_m,x_{m-1},w}, y_m}.
	\end{multline*}
	
	It follows from the fact that $\im e=\set{x_1,\ldots,x_m}\neq\set{y_1,\ldots,y_m}=\im f$ that $\gamma_2^s=e\neq f=\gamma_{n-1}^t$. This implies that $\gamma_2\neq\gamma_{n-1}$ and, consequently, we must have $n\geqslant 4$ and $\dist{\commgraph{\ptr{X}}}{\alpha}{\beta}\geqslant 3$.

	Since $\gamma_2$ commutes with $\alpha$ and $\gamma_3$, and $\gamma_{n-1}$ commutes with $\beta$ and $\gamma_{n-2}$, then we also have that $e$ commutes $\alpha$ and $\gamma_3$, and $f$ commutes with $\beta$ and $\gamma_{n-2}$. Moreover, since the path we are considering is of minimum length, it is clear that none of the partial transformations $\gamma_3,\ldots,\gamma_n$ is equal to $e$, and none of the partial transformations $\gamma_1,\ldots,\gamma_{n-2}$ is equal to $f$. Thus
	\begin{displaymath}
		\alpha=\gamma_1-e-\gamma_3-\cdots-\gamma_{n-2}-f-\gamma_n=\beta
	\end{displaymath}
	is also a path of minimum length from $\alpha$ to $\beta$ (in $\commgraph{\ptr{X}}$).
	
	We note that $e$ and $f$ do not commute (because $x_1 ef=x_1f=y_{m-1}\neq x_{m-1}=y_{m-1} e=x_1 fe$). Hence $n\geqslant 5$ and $\dist{\commgraph{\ptr{X}}}{\alpha}{\beta}\geqslant 4$, which implies that, in the path above there is at least one element between $e$ and $f$.
	
	Let $\gamma\in\ptr{X}$ be such that $e\gamma=\gamma e$ and $f\gamma=\gamma f$. We want to see that $\gamma\in\set{\emptyset,\id{X}}$. We have two possibilities: $\gamma\in\ptr{X}\setminus\tr{X}$ or $\gamma\in\tr{X}$.
	
	\smallskip
	
	\textit{Case 1:} Suppose that $\gamma\in\ptr{X}\setminus\tr{X}$. In Figure~\ref{P(X), Figure: unified e, f even case} we have an illustration of the graph $\graphptr{e}{f}$, which is clearly connected. Consequently, Lemma~\ref{P(X): unified graph lemma} allows us to conclude that $\gamma=\emptyset$.
	\begin{figure}[hbt]
		\begin{center}
			\begin{tikzpicture}
				\foreach \x in {0,-1,-2,-3,-5,-6,-7,-8,-9,-10,-11,-12} {
					\node[vertex] (x\x) at (90+\x*360/13:1.5cm) {};
				}
				
				\draw (90:1.9cm) node {$x_1$};
				\draw (90-1*360/13:1.9cm) node {$y_1$};
				\draw (90-2*360/13:1.9cm) node {$x_2$};
				\draw (90-3*360/13:1.9cm) node {$y_2$};
				\draw[xshift=1mm] (90-5*360/13:1.9cm) node {$x_{m-3}$};
				\draw (90-6*360/13:1.9cm) node {$y_{m-3}$};
				\draw (90-7*360/13:1.9cm) node {$x_{m-2}$};
				\draw (90-8*360/13:1.9cm) node {$y_{m-2}$};
				\draw (90-9*360/13:1.9cm) node {$x_m$};
				\draw (90-10*360/13:1.9cm) node {$y_m$};
				\draw[xshift=-1mm] (90-11*360/13:1.9cm) node {$x_{m-1}$};
				\draw (90-12*360/13:1.9cm) node {$y_{m-1}$};
				
				\node[vertex] (z) at (90-3*360/8:0.6cm) {};
				\node[vertex] (w) at (90-7*360/8:0.5cm) {};
				
				\node[anchor=north west] at (z) {$z$};
				\node[anchor=north west] at (w) {$w$};
				
				\begin{scope}[edge]
					\draw (90+8*360/13:1.5cm) arc (90+8*360/13:90-3*360/13:1.5cm);
					\draw[dotted] (90-3*360/13:1.5cm) arc (90-3*360/13:90-5*360/13:1.5cm);
					
					\draw (z) -- (x-1);
					\draw (z) -- (x-9);
					\draw (w) -- (x0);
					\draw (w) -- (x-10);
				\end{scope}
			\end{tikzpicture}
		\end{center}
		\caption{Graph $\graphptr{e}{f}$ where $e=\chain{\set{x_1,y_1,w}, x_1}\allowbreak\chain{\set{x_2,y_2}, x_2}\cdots\allowbreak\chain{\set{x_{m-1},y_{m-1}}, x_{m-1}}\allowbreak\chain{\set{x_m,y_m,z}, x_m}$ and $f=\chain{\set{y_1,x_2,z}, y_1}\allowbreak\chain{\set{y_2,x_3}, y_2}\cdots\allowbreak\chain{\set{y_{m-3},x_{m-2}}, y_{m-3}}\allowbreak\chain{\set{y_{m-2},x_m}, y_{m-2}}\allowbreak\chain{\set{y_{m-1},x_1}, y_{m-1}}\allowbreak\chain{\set{y_m,x_{m-1},w}, y_m}$.}
		\label{P(X), Figure: unified e, f even case}
	\end{figure}
	
	\smallskip

	\textit{Case 2:} Suppose that $\gamma\in\tr{X}$. Since $\dist{\commgraph{\tr{X}}}{\alpha}{\beta}\geqslant 5$, then there is no path of length $4$ from $\alpha$ to $\beta$ in $\commgraph{\tr{X}}$. Consequently, there is no path of length $4$ from $\alpha$ to $\beta$ in $\commgraph{\ptr{X}}$ whose vertices are all full transformations. Therefore $\alpha-e-\gamma-f-\beta$ cannot be a path in $\commgraph{\ptr{X}}$, which implies that $\gamma\in\centre{\ptr{X}}\cap\tr{X}=\set{\id{X}}$.
	
	\smallskip
	
	It follows from cases 1 and 2 that $\gamma\in\set{\emptyset,\id{X}}=\centre{\ptr{X}}$, which implies that there is no vertex that is simultaneously adjacent to $e$ and $f$. Then $\gamma_3\neq\gamma_{n-2}$ and, consequently, $n\geqslant 6$; that is, $\dist{\commgraph{\ptr{X}}}{\alpha}{\beta}\geqslant 5$, which concludes part 2 of the proof.
\end{proof}

At last we can use the lemmata proved on the previous pages to obtain the diameter of $\commgraph{\ptr{X}}$.

\begin{theorem}\label{P(X): diameter}
	Suppose that $\abs{X}\geqslant 2$. Then
	\begin{enumerate}
		\item If $\abs{X}$ is prime, then $\commgraph{\ptr{X}}$ is not connected.
		
		\item If $\abs{X}=4$, then $\diam{\commgraph{\ptr{X}}}=4$.
		
		\item If $\abs{X}\geqslant 6$ is composite, then $\diam{\commgraph{\ptr{X}}}=5$.
	\end{enumerate}
\end{theorem}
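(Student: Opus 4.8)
The plan is to deduce the theorem from the results established above, handling the three cases in turn. For part~(1), suppose $\abs{X}$ is prime and fix a cycle $\alpha\in\sym{X}$ of length $\abs{X}$; since $\abs{X}$ is prime, each power $\alpha^k$ with $1\leqslant k\leqslant\abs{X}-1$ is again a cycle of length $\abs{X}$ whose powers coincide with those of $\alpha$. Applying Lemma~\ref{P(X): what commutes with cycles length |X|} to $\alpha$ and to every such $\alpha^k$, the only vertices of $\commgraph{\ptr{X}}$ adjacent to a power of $\alpha$ are themselves powers of $\alpha$, so the connected component of $\alpha$ is contained in $\set{\alpha,\ldots,\alpha^{\abs{X}-1}}$. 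As $\abs{X}\geqslant 2$, there is a vertex of $\commgraph{\ptr{X}}$ that is not a permutation (say, a constant full transformation), hence lies outside that set, and therefore $\commgraph{\ptr{X}}$ is disconnected.

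For the upper bounds in parts~(2) and~(3), I would argue by cases on two vertices $\mu,\nu$ of $\commgraph{\ptr{X}}$, using the partition of $\ptr{X}\setminus\set{\emptyset,\id{X}}$ into $\sym{X}\setminus\set{\id{X}}$, $\tr{X}\setminus\sym{X}$ and $\ptr{X}\setminus\parens{\tr{X}\cup\set{\emptyset}}$. If $\mu,\nu\in\tr{X}$, then $\dist{\commgraph{\ptr{X}}}{\mu}{\nu}$ is at most the distance from $\mu$ to $\nu$ in the subgraph $\commgraph{\tr{X}}$, hence at most $\diam{\commgraph{\tr{X}}}$, which by Theorem~\ref{P(X): diam T(X)} equals $4$ if $\abs{X}=4$ and $5$ if $\abs{X}\geqslant 6$ is composite. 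The three mixed configurations are covered directly: Lemma~\ref{P(X): upper bound diam, a,b€P(X)-(T(X) U 0)} and Lemma~\ref{P(X): upper bound diam, a€T(X)-S(X) b€P(X)-(T(X) U 0)} give distance at most $4$, and Lemma~\ref{P(X): upper bound diam, a€S(X)-1 b€P(X)-(T(X) U 0)} gives distance at most $5$ in general and at most $4$ when $\abs{X}=4$. Assembling the cases yields $\diam{\commgraph{\ptr{X}}}\leqslant 4$ when $\abs{X}=4$ and $\diam{\commgraph{\ptr{X}}}\leqslant 5$ when $\abs{X}\geqslant 6$ is composite; in particular $\commgraph{\ptr{X}}$ is connected in these cases.

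For the matching lower bounds I would invoke Lemma~\ref{lower bound diam, |X|=4} when $\abs{X}=4$, and, when $\abs{X}\geqslant 6$ is composite, distinguish $\abs{X}=6$ (Lemma~\ref{P(X): lower bound diam, |X|=6 composite}), $\abs{X}=8$ (Lemma~\ref{P(X): lower bound diam, |X|=8 composite}) and $\abs{X}\geqslant 9$ composite (Lemma~\ref{P(X): lower bound diam, |X|>=9 composite}); since $7$ is prime these three subcases exhaust the composite integers that are at least~$6$. Each of these lemmata produces two vertices at distance at least the required value, which together with the upper bounds gives the stated equalities. Since the substantial work is already contained in the preceding lemmata, I do not expect a genuine obstacle; the only delicate points are checking that the case analysis for the upper bound is exhaustive (guaranteed by the partition above) and, in part~(1), pinning down a concrete vertex lying outside the component of the full cycle.
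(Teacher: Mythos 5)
Your proposal is correct and follows essentially the same route as the paper: part (1) uses Lemma~\ref{P(X): what commutes with cycles length |X|} to isolate the full cycle's component (the paper phrases this as full cycles being adjacent only to full cycles, which is the same argument), and parts (2)--(3) combine the subgraph bound from Theorem~\ref{P(X): diam T(X)} with Lemmata~\ref{P(X): upper bound diam, a,b€P(X)-(T(X) U 0)}--\ref{P(X): upper bound diam, a€S(X)-1 b€P(X)-(T(X) U 0)} for the upper bounds and Lemmata~\ref{lower bound diam, |X|=4}--\ref{P(X): lower bound diam, |X|>=9 composite} for the matching lower bounds, exactly as in the paper.
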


\begin{proof}
	\textbf{Part 1.} Suppose that $\abs{X}$ is prime.
	
	Let $\alpha\in\sym{X}$ be a cycle of length $\abs{X}$. Let $\beta\in \ptr{X}\setminus\set{\emptyset,\id{X},\alpha}$ be a vertex of $\commgraph{\ptr{X}}$ adjacent to $\alpha$. Then $\alpha\beta=\beta\alpha$ and, by Lemma~\ref{P(X): what commutes with cycles length |X|}, we have that $\beta\in\set{\alpha^2,\ldots,\alpha^{\abs{X}-1}}$. Since $\abs{X}$ is prime, we can conclude that the permutations $\alpha^2,\ldots,\alpha^{\abs{X}-1}$ are all cycles of length $\abs{X}$, which implies that $\beta$ is a cycle of length $\abs{X}$.
	
	Therefore the vertices of $\commgraph{\ptr{X}}$ that are cycles of length $\abs{X}$ can only be adjacent to vertices that are also cycles of length $\abs{X}$. Thus $\commgraph{\ptr{X}}$ has at least two connected components --- one of them only has vertices that are cycles of length $\abs{X}$, and at least one other only has vertices that are not cycles of length $\abs{X}$. Consequently, $\commgraph{\ptr{X}}$ is not connected.
	
	\medskip
	
	\textbf{Part 2.} Suppose that $\abs{X}\geqslant 4$ is composite.
	
	In what follows we are going to establish an upper bound for $\diam{\commgraph{\ptr{X}}}$. Let $\alpha,\beta\in\ptr{X}\setminus\set{\emptyset, \id{X}}$ be two vertices of $\commgraph{\ptr{X}}$.
	
	\smallskip
	
	\textit{Case 1:} Assume that $\alpha,\beta\in\tr{X}\setminus\set{\id{X}}$. Since $\commgraph{\tr{X}}$ is a subgraph of $\commgraph{\ptr{X}}$, then we have that
	\begin{displaymath}
		\dist{\commgraph{\ptr{X}}}{\alpha}{\beta}\leqslant\dist{\commgraph{\tr{X}}}{\alpha}{\beta}\leqslant\diam{\commgraph{\tr{X}}}=\begin{cases}
			4 &\text{if } \abs{X}=4,\\
			5 &\text{if } \abs{X}\neq 4
		\end{cases}
	\end{displaymath}
	where the equality at the end is given by Theorem~\ref{P(X): diam T(X)}.

	%According to Theorem~\ref{P(X): upper bound diam, a,b€T(X)-1} we have that $\diam{\commgraph{\tr{X}}}=5$. This implies that, in $\commgraph{\tr{X}}$, there is a path from $\alpha$ to $\beta$ whose length is at most $5$. Since $\commgraph{\tr{X}}$ is a subgraph of $\commgraph{\ptr{X}}$, then we can conclude that, in $\commgraph{\ptr{X}}$, there is also a path from $\alpha$ to $\beta$ whose length is at most $5$; that is, $\dist{\commgraph{\ptr{X}}}{\alpha}{\beta}\leqslant 5$.
	
	\smallskip
	
	\textit{Case 2:} Assume that $\alpha,\beta\in\ptr{X}\setminus\parens{\tr{X}\cup\set{\emptyset}}$. Then, by Lemma~\ref{P(X): upper bound diam, a,b€P(X)-(T(X) U 0)}, we have that $\dist{\commgraph{\ptr{X}}}{\alpha}{\beta}\leqslant 4$.
	
	\smallskip
	
	\textit{Case 3:} Assume that $\alpha\in\tr{X}\setminus\sym{X}$ and $\beta\in\ptr{X}\setminus\parens{\tr{X}\cup\set{\emptyset}}$ (or $\alpha\in\ptr{X}\setminus\parens{\tr{X}\cup\set{\emptyset}}$ and $\beta\in\tr{X}\setminus\sym{X}$). It follows from Lemma~\ref{P(X): upper bound diam, a€T(X)-S(X) b€P(X)-(T(X) U 0)} that $\dist{\commgraph{\ptr{X}}}{\alpha}{\beta}\leqslant 4$.
	
	\smallskip
	
	\textit{Case 4:} Assume that $\alpha\in\sym{X}\setminus\set{\id{X}}$ and $\beta\in\ptr{X}\setminus\parens{\tr{X}\cup\set{\emptyset}}$ (or $\alpha\in\ptr{X}\setminus\parens{\tr{X}\cup\set{\emptyset}}$ and $\beta\in\sym{X}\setminus\set{\id{X}}$). Then Lemma~\ref{P(X): upper bound diam, a€S(X)-1 b€P(X)-(T(X) U 0)} establishes that
	\begin{displaymath}
		\dist{\commgraph{\ptr{X}}}{\alpha}{\beta}\leqslant\begin{cases}
			4 &\text{if } \abs{X}=4\\
			5 &\text{if } \abs{X}\neq 4.
		\end{cases} 
	\end{displaymath}
	
	\smallskip
	
	Since $\alpha$ and $\beta$ are arbitrary vertices of $\commgraph{\ptr{X}}$, then cases 1--4 allow us to conclude that
	\begin{displaymath}
		\diam{\commgraph{\ptr{X}}}=\max\gset{\dist{\commgraph{\ptr{X}}}{\alpha}{\beta}}{\alpha,\beta\in\ptr{X}\setminus\set{\emptyset,\id{X}}}\leqslant\begin{cases}
			4 &\text{if } \abs{X}=4,\\
			5 &\text{if } \abs{X}\neq 4.
		\end{cases} 
	\end{displaymath}
	
	Moreover, Lemma~\ref{lower bound diam, |X|=4} establishes that, when $\abs{X}=4$, then there exist $\alpha,\beta\in\ptr{X}\setminus\set{\emptyset,\id{X}}$ such that $\dist{\commgraph{\ptr{X}}}{\alpha}{\beta}\geqslant 4$; and Lemmata~\ref{P(X): lower bound diam, |X|=6 composite}, \ref{P(X): lower bound diam, |X|=8 composite}, \ref{P(X): lower bound diam, |X|>=9 composite} establish that, when $\abs{X}\neq 4$, then there exist $\alpha,\beta\in\ptr{X}\setminus\set{\emptyset,\id{X}}$ such that $\dist{\commgraph{\ptr{X}}}{\alpha}{\beta}\geqslant 5$. Therefore
	\begin{displaymath}
		\diam{\commgraph{\ptr{X}}}=\begin{cases}
			4 &\text{if } \abs{X}=4,\\
			5 &\text{if } \abs{X}\neq 4
		\end{cases}
	\end{displaymath}
	which concludes the proof.
\end{proof}

By comparison of Theorems~\ref{P(X): diam T(X)} and \ref{P(X): diameter}, we can conclude that $\diam{\commgraph{\ptr{X}}}=\diam{\commgraph{\tr{X}}}$. Since $\tr{X}$ is a subsemigroup of $\ptr{X}$, this shows that there are semigroups whose commuting graph has the same diameter as the commuting graph of one of its (proper) subsemigroups. Moreover, it is also possible to find a semigroup $S$ and a subsemigroup $T$ of $S$ such that $\commgraph{T}$ and $\commgraph{S}$ are connected and $\diam{\commgraph{T}}<\diam{\commgraph{S}}$. For instance, the symmetric inverse semigroup $\psym{X}$ on $X$ is a subsemigroup of $\ptr{X}$ and, when $\abs{X}\geqslant 6$ is even, we have $\diam{\commgraph{\psym{X}}}=4<5=\diam{\commgraph{\ptr{X}}}$ (the diameter of $\psym{X}$ is obtained in \cite[Theorem 6.12]{Commuting_graph_I_X}).

    \bibliography{Bibliography} %\jobname
\bibliographystyle{alphaurl}

\end{document}